\documentclass[english,11pt,reqno]{amsart}
\usepackage{geometry}
\usepackage[T1]{fontenc}
\usepackage[utf8]{inputenc}
\usepackage{newtxtext,microtype}
\usepackage{amsmath,amssymb,amsthm,mathtools,mathrsfs,amscd}
\usepackage{newtxmath}
\usepackage{bm,enumitem,booktabs,graphicx,tikz,array,xcolor}
\usepackage{aliascnt}
\usepackage{hyperref}
\usepackage[hyperpageref]{backref}

\usepackage[nameinlink,noabbrev]{cleveref}
\numberwithin{equation}{section}
\allowdisplaybreaks[4]
\setlist[itemize]{leftmargin=2.2em,itemsep=0.25em,topsep=0.35em}
\setlist[enumerate]{leftmargin=2.4em,itemsep=0.25em,topsep=0.35em}
\crefname{section}{Section}{Sections}
\Crefname{section}{Section}{Sections}
\crefname{subsection}{Subsection}{Subsections}
\Crefname{subsection}{Subsection}{Subsections}

\newcommand{\C}{\mathbf C}
\newcommand{\R}{\mathbf R}

\newcommand{\Z}{\mathbf Z}
\newcommand{\Pone}{\mathbf P^1}

\newcommand{\PP}{\mathbf P}
\newcommand{\cA}{\mathcal A}
\newcommand{\cO}{\mathcal O}
\newcommand{\OO}{\mathcal O}

\newcommand{\GL}{\mathrm{GL}}
\newcommand{\Hom}{\mathrm{Hom}}

\newcommand{\Spec}{\mathrm{Spec}}

\newcommand{\red}{\mathrm{red}}

\newcommand{\tor}{\mathrm{tor}}

\newcommand{\e}{\mathrm e}
\newcommand{\ii}{\mathrm i}

\newcommand{\pr}{\operatorname{pr}}

\DeclareMathOperator{\Bl}{Bl}

\hypersetup{colorlinks=true,linkcolor=blue!55!black,citecolor=blue!62!black,
urlcolor=blue!80!black,
pdftitle={Oka Geometry of Complex Six-Spheres},
pdfauthor={}}
\newtheorem{theorem}{Theorem}[section]
\crefname{theorem}{Theorem}{Theorems}
\Crefname{theorem}{Theorem}{Theorems}
\theoremstyle{plain}
\newaliascnt{proposition}{theorem}
\newtheorem{proposition}[proposition]{Proposition}
\aliascntresetthe{proposition}
\crefname{proposition}{Proposition}{Propositions}
\Crefname{proposition}{Proposition}{Propositions}
\newaliascnt{lemma}{theorem}
\newtheorem{lemma}[lemma]{Lemma}
\aliascntresetthe{lemma}
\crefname{lemma}{Lemma}{Lemmas}
\Crefname{lemma}{Lemma}{Lemmas}
\newaliascnt{corollary}{theorem}
\newtheorem{corollary}[corollary]{Corollary}
\aliascntresetthe{corollary}
\crefname{corollary}{Corollary}{Corollaries}
\Crefname{corollary}{Corollary}{Corollaries}
\theoremstyle{definition}
\newaliascnt{definition}{theorem}
\newtheorem{definition}[definition]{Definition}
\aliascntresetthe{definition}
\crefname{definition}{Definition}{Definitions}
\Crefname{definition}{Definition}{Definitions}
\theoremstyle{remark}
\newaliascnt{remark}{theorem}

\aliascntresetthe{remark}
\crefname{remark}{Remark}{Remarks}
\Crefname{remark}{Remark}{Remarks}
\crefname{equation}{Equation}{Equations}
\Crefname{equation}{Equation}{Equations}
\crefname{figure}{Figure}{Figures}
\Crefname{figure}{Figure}{Figures}
\crefname{table}{Table}{Tables}
\Crefname{table}{Table}{Tables}
\title{Oka Geometry of Complex Six-Spheres}
\author{Yun-Heng Du}
\address[Yun-Heng Du]{Academy of Mathematics and Systems Science,
	Chinese Academy of Sciences, Beijing 100190, China}
\email{duyunheng@amss.ac.cn}
\author{Bin Guo}
\address[Bin Guo]{Academy of Mathematics and Systems Science,
	Chinese Academy of Sciences, Beijing 100190, China}
\email{guobin@amss.ac.cn}
\author{Yuta Kusakabe}
\address[Yuta Kusakabe]{Faculty of Mathematics, Kyushu University,
	744 Motooka, Nishi-ku, Fukuoka 819-0395, Japan}
\email{kusakabe@math.kyushu-u.ac.jp}
\author{Song-Yan Xie}
\address[Song-Yan Xie]{State Key Laboratory of Mathematical Sciences,
	Academy of Mathematics and Systems Science, Chinese Academy of Sciences,
	Beijing 100190, China; School of Mathematical Sciences, University of
	Chinese Academy of Sciences, Beijing 100049, China}
\email{xiesongyan@amss.ac.cn}
\date{}
\subjclass[2020]{Primary 32Q56}
\keywords{Oka manifolds, Oka maps, complex six-sphere, torus fibrations,
 point blow-ups, deformations, cyclic coverings}
\begin{document}

\begin{abstract}
Alp\"oge constructed a complex structure on $S^6$ through a torus
fibration with exactly three special fibres. We prove that every member
of the associated admissible period family is Oka and that the family
projection over each relatively compact parameter disc is an Oka map.
The Oka property survives the deletion or simultaneous blow-up of any
finite set of distinct points in any member. Blowing up a single point
therefore produces an Oka complex manifold diffeomorphic, but not
biholomorphic, to $\PP^3$. The proofs combine explicit cyclic coverings
with analytic extension and the invariance of the Oka property under
holomorphic coverings.
\end{abstract}


\maketitle

\setcounter{tocdepth}{2}
\begingroup
\hypersetup{linkcolor=black}
\tableofcontents
\endgroup

\section{Introduction}\label{nm:sec:introduction}

The Oka principle begins with a flexibility question: when can a
continuous map from a Stein space be deformed to a holomorphic map?
Informally, a complex manifold is \emph{Oka} if these deformations can be
made with approximation, interpolation, and parameters
\cite{ForstnericBook,ForstnericSurvey}. Gromov proved, using holomorphic
sprays, that elliptic manifolds are Oka \cite{GromovOka}. For a
holomorphic submersion, an \emph{Oka map} is the relative version:
liftings over a fixed map to the base enjoy the same flexibility, and
the submersion is a Serre fibration \cite{KusakabeMaps}.

Kusakabe's analytic-Zariski localization theorem
\cite{KusakabeLocalization} assembles the Oka property from an
analytic-Zariski open cover. His Oka-map criterion extends this
strategy to holomorphic submersions \cite{KusakabeMaps}. These tools
are well suited to a fibration whose general fibre is homogeneous
while its special fibres are singular or multiple.

Alp\"oge constructed a torus-fibred complex threefold diffeomorphic
to $S^6$ \cite{AlpogeS6}. Its general fibre is a complex two-torus,
and the fibration has exactly three special values on $\Pone$: two multiple
fibres with bielliptic reductions and one irreducible nonnormal cusp
fibre, whose normalization is a degree-six del Pezzo surface. Any
ordered triple of distinct values on $\Pone$ can be moved to
$0,1,\infty$ by an automorphism; these coordinates are used
throughout the paper. The orders $3$ and $4$ and the nature of the cusp remain
essential to the construction.
We use Alp\"oge's construction and smooth-type theorem, including
its complex structure on $S^6$; Appendix~\ref{app:source-six-sphere} records
the particular source ingredients used here and their locations.

The smooth fibres and $\Pone$ are Oka, so it is natural to ask whether
the total space is Oka. The base of the smooth fibration,
$\Pone\setminus\{0,1,\infty\}$, is hyperbolic; recovering flexibility
therefore depends on the three special fillings. The proof passes to cyclic
covers containing explicit two-row conic bands, obtained by retaining
two adjacent toric rows in the cusp model. Their algebraic charts
provide Oka open subsets. The analytic extension theorem
\cite[Theorem~1.9]{AnDuGuoWangXieLAP} extends the Oka property from
such a nonempty analytic-Zariski open subset to its ambient manifold.

An additive period direction determines the admissible half-plane
$\cA\subset\C$ in \cref{prop:admissible-half-plane}. Over each relatively
compact parameter disc $\Delta\Subset\cA$ the fixed
filling data give a holomorphic family
$\pi_\Delta:\mathfrak X_\Delta\to\Delta$ of compact threefolds
$X_a$. Here $U$ is the uniformizing upper half-plane of the base and
$\tau,\mu,\beta_0$ are the holomorphic period functions on $U$ constructed in
\cref{sec:data}. The marked matrix is
\[
 \Omega_a(z)=\begin{pmatrix}
 6\mu(z)&\tau(z)&1&0\\
 \beta_0(z)+a&\mu(z)&0&1
 \end{pmatrix}.
\]
The construction fixes the monodromy, finite logarithmic twists and
untranslated cusp gluing as $a$ varies. Its proper submersion and
smooth-type comparison are proved in
\cref{sec:relative-compactification}. Write
$f_a:X_a\to\Pone$ for the torus fibration and
$S_{1,a},S_{2,a}$ for the two finite reductions. The special fibres are
\begin{equation}\label{nm:eq:multiple-fibres}
 f_a^*(0)=3S_{1,a},\qquad f_a^*(1)=4S_{2,a},\qquad
 f_a^{-1}(\infty)=W_a.
\end{equation}

\begin{theorem}\label{nm:thm:oka-local}
For every disc $\Delta\Subset\cA$, the projection
$\pi_\Delta:\mathfrak X_\Delta\to\Delta$ is an Oka map. For every
$a\in\cA$, the manifold $X_a$ and both
$X_a\setminus S_{1,a}$ and $X_a\setminus S_{2,a}$ are Oka.
\end{theorem}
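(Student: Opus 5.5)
The plan follows the route announced in the introduction: find a nonempty analytic-Zariski open Oka subset inside a suitable cyclic cover, extend the Oka property by \cite[Theorem~1.9]{AnDuGuoWangXieLAP}, and descend along holomorphic coverings.

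\emph{Step 1: the cyclic cover.} Fix $a\in\cA$. I would take the cyclic base change $\Pone\to\Pone$ of degree $12$, branched only over $0$ and $1$. Its ramification orders are chosen divisible by $3$ over $0$ and by $4$ over $1$, so that the local orbifold structure at the multiple fibres is absorbed. Let $\widetilde X_a$ be the normalization of the fibre product. Over the preimages of $0$ and $1$ the multiple fibres $3S_{1,a}$ and $4S_{2,a}$ become smooth torus fibres, up to the finite bielliptic quotient. Consequently the induced map $\widetilde X_a\to X_a$ is unramified away from the cusp fibre. For the statements about $X_a\setminus S_{j,a}$, I would instead use covers branched only at the remaining special point of order $3$ or $4$, together with $\infty$. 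Since the Oka property is invariant under unbranched holomorphic coverings, in both directions, it suffices to prove that the relevant unramified covers $Y\to X_a$, resp.\ $Y\to X_a\setminus S_{j,a}$, are Oka. A point needing care is that the covering must be unbranched along the cusp fibre. I would arrange this by choosing the base covering unbranched at $\infty$ whenever the monodromy allows it. Where the monodromy forces branching, I would pass instead to a covering of the complement of the special fibres and work there.

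\emph{Step 2: an Oka analytic-Zariski open subset.} Inside $Y$ I would locate the two-row conic band. This is a neighbourhood of two adjacent toric rows of the normalized cusp fibre, the del Pezzo surface of degree six, glued by the untranslated cusp gluing. The algebraic charts from the cusp model identify this band with a Zariski open subset of an algebraic variety. That variety is a union of complements of conics, of the form $\C^*$-bundles over affine or flag-type bases, so its charts are algebraically elliptic and hence Oka by Gromov's theorem \cite{GromovOka}. The complement of the band in $Y$ is a closed analytic subset. Hence \cite[Theorem~1.9]{AnDuGuoWangXieLAP} gives that $Y$ is Oka, and by Step 1 $X_a$ and $X_a\setminus S_{j,a}$ are Oka. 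I expect this step to be the main obstacle. One must check that the explicit charts of the band really are Oka: I would show they are complements of codimension-$\ge 2$ sets in products $\C^*\times\C^*\times\C$, or $\C^*$-bundles over such. One must also check that the band is complementary to an analytic, not merely closed, subset of the whole cover, including its part over the hyperbolic base.

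\emph{Step 3: the family.} For $\pi_\Delta$, all the constructions above depend holomorphically on $a$, because the filling data are fixed. This gives a fibrewise cover $\widetilde{\mathfrak X}_\Delta\to\mathfrak X_\Delta$ over $\Delta$ and a relative band $\mathfrak B\subset\widetilde{\mathfrak X}_\Delta$ whose complement is analytic. On $\mathfrak B$ the relative charts are products with $\Delta$ of the algebraic charts above, so $\mathfrak B\to\Delta$ is locally a trivial Oka fibration. I would then combine this with Kusakabe's Oka-map criterion \cite{KusakabeMaps}, or the relative version of the extension theorem. Since $\pi_\Delta$ is a proper submersion (\cref{sec:relative-compactification}) with Oka fibres, this shows the covering family is an Oka map. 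Oka maps are also invariant under unbranched coverings of the total space, and that gives the result for $\pi_\Delta$. The relative compactness $\Delta\Subset\cA$ is what provides uniform charts over the whole disc.
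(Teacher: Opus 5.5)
There is a genuine gap, and it begins in Step~1.

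\textbf{The degree-$12$ cover is branched.} A degree-$12$ cyclic cover of $\Pone$ branched only over $0$ and $1$ is totally ramified of order $12$ there. Near $p_j$ the normalized fibre product therefore maps to the local model of \cref{prop:relative-finite-filling} by $s_j=w^{12/m_j}$, followed by the free quotient by $\langle\widetilde g_j\rangle$. It is branched of order $4$ along the lift of $S_{1,a}$ and of order $3$ along the lift of $S_{2,a}$. The descent step \cite[Proposition~2.8]{ForstnericSurvey} needs an unramified covering.

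No other choice of cover repairs this. $X_a$ is diffeomorphic to $S^6$ (\cref{cor:sphere-type}), hence simply connected, so it has no nontrivial unramified covering at all. Your two fallbacks also fail: branching over $\infty$ ramifies along the reduced fibre $W_a$, and coverings of the smooth core map submersively onto the hyperbolic curve $B^\circ$, so they are never Oka.

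\textbf{The missing idea: delete one multiple fibre first.} The maps $\rho_4(b)=1/(1-b^4)$ and $\rho_3(b)=b^3/(b^3-1)$ send $\C$ onto $\Pone\setminus\{p_j\}$ with degree exactly $m$. They are totally ramified only over the remaining orbifold point and unramified over $\infty$, so $M_m\to X_a\setminus S_{j,a}$ is unramified. $X_a$ itself is reached only at the end, by analytic extension from $X_a\setminus S_{j,a}$.

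\textbf{Step~2 lacks the second covering.} In $M_m$ the cusp fibres are still compact irreducible nonnormal surfaces with normalization $E_0$. ``Two adjacent rows'' exist only after the infinite cyclic covering $Q_m\to M_m$ attached to $\Lambda_0=\ker\gamma$, where the cusp model becomes $Y_\varepsilon/\langle\Psi_{(0,1)}\rangle$.

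The band is not a neighbourhood of the cusp fibre. Such a neighbourhood would not be analytic-Zariski open, so it could not feed \cite[Theorem~1.9]{AnDuGuoWangXieLAP}. Instead it is the complement $Y_m\subset Q_m$ of the central divisors in all other rows. This is a small resolution of $\{A\otimes B=\Delta_m(z)\}\subset L_m\oplus L_m^{-1}$ over an elliptic surface proper over $\C_z$.

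Its Oka seed $V\simeq(\C^*)^3$ requires three things:
\begin{itemize}
\item the global identification of that surface with $E_3$ or $E_4$, via a holomorphic section, the Weierstrass model and the $j$-function;
\item the open set $\C^*_u\times\C_w$ of the Danielewski surface $W_m$;
\item a trivialization of $L_m$ over that open set.
\end{itemize}
Toric charts near the cusp do not provide this.

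\textbf{Step~3.} The inference ``proper submersion with Oka fibres, hence Oka map'' is not an available implication. The relative charts are also not products with $\Delta$ for free, since the complex structure of $X_a$ genuinely varies through $\beta_0+a$. The same simple-connectivity obstruction rules out an unramified covering of $\mathfrak X_\Delta$ itself.

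The key point is \cref{nm:lem:unwrap}: the periods on $\Lambda_0$ are independent of $a$, because $\Omega_a-\Omega_{a_*}=(a-a_*)e_2\gamma$. The logarithmic twists depend linearly on $a$ along $e_2$; after a Cartan Theorem~B correction of them, the whole $\Z$-covering family over $\mathcal U_j$ is globally $Q_{m,a_*}\times\Delta$. The argument then runs as follows.
\begin{itemize}
\item The product projection has POPAI because $Q_{m,a_*}$ is Oka.
\item The covering $P_m$ is a surjective Oka map, so \cite[Corollary~5.6]{KusakabeMaps} gives POPAI for $\pi|_{\mathcal U_j}$.
\item \cite[Corollary~5.13]{KusakabeMaps}, applied to the analytic-Zariski cover $\{\mathcal U_1,\mathcal U_2\}$, together with Ehresmann's theorem, makes $\pi_\Delta$ an Oka map.
\end{itemize}
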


Deleting either finite multiple fibre leaves one multiple fibre, which
is removed by a finite root base change. A cyclic period-lattice cover
then contains a two-row conic band over an elliptic surface. Removing
a section from that surface gives an explicit Danielewski model.
Over a two-dimensional algebraic torus in this model, the principal
conic open is a holomorphically trivial $\C^*$-bundle, hence isomorphic
to $(\C^*)^3$.
Analytic extension gives the Oka property of the band and the cyclic
cover. Classical invariance under holomorphic coverings
\cite[Proposition~2.8]{ForstnericSurvey} gives the finite-fibre
complement, and a final analytic extension gives $X_a$.

The family assertion requires more than fibrewise Oka. The chosen
period-lattice kernel has periods independent of $a$, so its covering
family over a disc is a product with one fixed Oka manifold. The
parametric Oka property of the product projection passes through this
covering by Kusakabe's invariance theorem
\cite[Corollary~5.6]{KusakabeMaps}. Localization then gives the Oka property
of $\pi_\Delta$. The cyclic-cover construction and relative argument appear
in \cref{nm:sec:relative-oka}.

Chen~\cite{ChenConditionalOka} independently established the Oka
property for the admissible members of Alp\"oge's period family.

Analytic extension also applies to the torus-fibred complex
six-spheres of Liu--Rollenske \cite{LiuRollenskeS6} and
Viaclovsky \cite{ViaclovskyTwoParameterS6}. In each construction,
deleting the unique multiple fibre leaves a proper fibration over
$\C$ with a holomorphic section and connected complex two-tori as its
smooth general fibres. The torus-fibration theorem
\cite[Theorem~7.1]{AnDuGuoWangXieUbiquitous} makes this complement
Oka. Analytic extension \cite[Theorem~1.9]{AnDuGuoWangXieLAP}
then gives the Oka property of the compact total space.

Removing or blowing up points tests the flexibility of this geometry.
For a finite centre, its complete lift to a cyclic cover meets each two-row
band in two parts: finitely many full deck orbits in the common core and
finitely many toroidal points.

\begin{theorem}\label{thm:intro-point-modifications}
For every $a\in\cA$ and every finite set $P\subset X_a$ of distinct points,
both $X_a\setminus P$ and the simultaneous blow-up
$\operatorname{Bl}_P X_a$ are Oka.
\end{theorem}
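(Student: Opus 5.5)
The plan is to reduce both assertions to the Oka property of an analytic-Zariski open subset that avoids the points, and then to climb back up by analytic extension \cite[Theorem~1.9]{AnDuGuoWangXieLAP} and by invariance under finite holomorphic coverings \cite[Proposition~2.8]{ForstnericSurvey}, exactly as in the proof of \cref{nm:thm:oka-local}.

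For $X_a\setminus P$, I first pick a finite multiple fibre, say $S_{1,a}$. The manifold $Y=X_a\setminus(S_{1,a}\cup P)$ is a nonempty analytic-Zariski open subset of $X_a\setminus P$, and its complement is $S_{1,a}\setminus P$. By analytic extension it therefore suffices to show that $Y$ is Oka. Next I pass to the cyclic cover $\tilde Y\to X_a\setminus S_{1,a}$ from \cref{nm:sec:relative-oka}, obtained by a root base change followed by the period-lattice cover. This cover is a finite unramified holomorphic covering over the complement of the remaining multiple fibre. Its restriction over $Y$ is the complement $\tilde Y\setminus\tilde P$ of the complete lift $\tilde P$ of $P$. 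Covering invariance then reduces the problem to showing that $\tilde Y\setminus\tilde P$ is Oka. If the covering is ramified somewhere, I first delete the ramification locus, which is harmless by analytic extension.

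On the cover I apply analytic extension once more. It suffices to find a nonempty analytic-Zariski open Oka subset $V\subset\tilde Y\setminus\tilde P$. As described before the statement, $\tilde P$ meets each two-row conic band in two kinds of points: full deck orbits lying in the common core, and finitely many toroidal points. I choose the band, and a translate of its algebraic torus chart, so that the principal conic open, which is isomorphic to $(\C^*)^3$, misses the toroidal points. This should be possible because the torus chart can be moved by the torus action and by deck transformations, while only finitely many points have to be avoided. The points of $\tilde P$ in the core are then finitely many points of $(\C^*)^3$. The key input is that $(\C^*)^3$ minus a finite set is Oka. This holds because the complement of a finite set (more generally, of a tame or algebraic subvariety of codimension at least $2$) in an algebraically elliptic or flexible variety is Oka. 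Alternatively, and more robustly, I would shrink to a smaller Zariski open $\C^*\times(\C^*)^2$-type chart that misses the finitely many points altogether. Concretely, I delete a hypersurface $\{g=1\}$ of the torus chart through all the points; its complement is still a smooth affine Oka variety such as a Danielewski-type or $\C^*$-bundle model over a punctured torus. Analytic extension across the remaining points then gives the Oka property of $\tilde Y\setminus\tilde P$.

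For $\operatorname{Bl}_P X_a$, the exceptional divisor $E$ is a closed analytic subset, and $\operatorname{Bl}_P X_a\setminus E\cong X_a\setminus P$, which is Oka by the first part. Analytic extension from this nonempty analytic-Zariski open subset gives the Oka property of the blow-up. The main obstacle is the middle step: making sure that the toroidal points of $\tilde P$ can be avoided by some principal conic chart in some band, and that the resulting punctured open set is genuinely Oka. I would handle this first by the hypersurface-deletion trick, which keeps everything inside a chart that is explicitly a product of $\C^*$'s or a trivial $\C^*$-bundle.
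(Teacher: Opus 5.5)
Your outer reduction is the same as the paper's. You delete $S_{1,a}$, pass to the root cover and then the period-lattice cover, and use covering invariance and analytic extension. For the blow-up you note that the complement of the exceptional divisor is $X_a\setminus P$.

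\textbf{The gap is in the middle step.} The period-lattice cover $Q_m\to M_m$ has deck group $\Xi_m\simeq\Z$. So the composite $Q_m\to X_a\setminus S_{1,a}$ is an infinite covering, not a finite one, and covering invariance forces you to delete the complete inverse image $\widehat P$.
\begin{itemize}
\item The lift of a noncentral point of $P$ is a full, infinite $\Xi_m$-orbit.
\item These orbits lie in the common core $\mathcal K_m$, hence in every two-row band, so changing band or translating the chart cannot dodge them.
\item Inside $V\simeq(\C^*)^3$ they typically form an infinite closed discrete set over the carrier curve $C_\nu=\{z=z_\nu\}$. The orbit leaves compact sets only because the $\C^*$-coordinate $A$ escapes to $0$ or $\infty$.
\end{itemize}
So your claim that the core points of $\widehat P$ are finitely many points of $(\C^*)^3$ is false, and both of your devices fail.
\begin{itemize}
\item Complements of infinite discrete sets in $\C^3$ or $(\C^*)^3$ need not be Oka. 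Rosay and Rudin constructed discrete sets in $\C^n$ met by every nondegenerate holomorphic map $\C^n\to\C^n$. There is also no finite configuration to put on a hypersurface.
\item The hypersurface trick is unsound on its own terms. Analytic extension transfers the Oka property from the smaller open set upward, so you would need $V\setminus H$ to be Oka. Natural choices destroy this: deleting $\{u=c\}$ leaves the hyperbolic factor $\C\setminus\{0,c\}$.
\item Your justification of the finite case is also off. $(\C^*)^3$ is neither flexible nor algebraically elliptic, since its nonconstant units are constant along every algebraic spray. The finite case does hold, but via an exponential lift.
\end{itemize}

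\textbf{The missing idea} is a tameness argument for the infinite set $E=(D\cup F)\cap V$. This is exactly what \cref{lem:proper-character} and \cref{prop:punctured-band} supply.
\begin{enumerate}
\item Lift $E$ through $(x,y,\sigma)\mapsto(\e^x,\e^y,\e^\sigma)$ to a closed discrete set $\widetilde E\subset\C^3$.
\item Choose a monomial $M=u^r\eta^s$ that is proper and nonconstant on the components of $C_\nu\cap T$.
\item Check that $(x,y,\sigma)\mapsto(rx+sy,\sigma)$ sends $\widetilde E$ to a closed discrete subset of $\C^2$. Bounded projected values keep $M$ and $A$ in compact annuli. Properness of $M$ on the carrier curves, together with the finiteness of $F$, then confines the corresponding points of $E$ to a compact part of $V$.
\item The Rosay--Rudin criterion makes $\widetilde E$ tame, so $\C^3\setminus\widetilde E$ is Oka. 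The exponential covering then makes $V\setminus E$ Oka.
\end{enumerate}
Only after this step do analytic extension to the band, to $Q_m\setminus\widehat P$, and covering invariance down to $M_4$ and $X_a\setminus S_{1,a}$ run as you describe.
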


On one principal band chart, a proper monomial character makes the
logarithmic lift of this complete centre tame. Its complement is Oka.
Analytic extension fills the rest of the band and the cyclic cover.
Invariance under holomorphic coverings, followed by analytic extension,
gives $X_a\setminus P$. The exceptional-divisor
complement in $\operatorname{Bl}_P X_a$ is precisely this punctured
manifold, so one further analytic extension proves the blow-up result.
The same principal-open argument yields the band curve complements in
\cref{sec:point-complements}.

For a single point $p$, the Oka manifold $\Bl_pX_a$ is
diffeomorphic to $\PP^3$. Its complex structure differs from the
standard projective one: the first Chern class of its tangent bundle
has divisibility $2$, whereas that of the standard $\PP^3$ has
divisibility $4$. The topology and Chern-class calculation are
spelled out in \cref{cor:exotic-projective-threefold}.

The band geometry also supplies curve complements. If any chosen
subcollection of the canonical exceptional curves is deleted from a
two-row band, the result remains Oka, even together with the mixed
finite/deck point deletions used above. The complement of the complete
residual-axis curve configuration is Oka as well
(\cref{thm:partial-exceptional,thm:axis-deletion}). These are band
statements.

The Oka--$1$ property is the corresponding Runge approximation
and Weierstrass jet-interpolation property for maps from open Riemann
surfaces \cite[Definition~1.1]{ForstnericLarussonOka1}. For an arbitrary
smooth compact curve $C\subset X_a$, the blow-up theorem of
Forstneri\v c--L\'arusson gives $\operatorname{Bl}_C X_a$ this property
(\cref{cor:smooth-curve-oka1}); the local $(-1,-1)$ conifold model
has an algebraically elliptic blow-up
(\cref{prop:local-conifold-curve}).

The paper is organized as follows. \Cref{sec:relative-compactification}
constructs the period family and its fillings.
\Cref{nm:sec:relative-oka} develops the cyclic bands, identifies their
algebraic charts, and proves the absolute and relative Oka statements.
\Cref{sec:point-complements} uses these charts for finite-point
deletions, simultaneous point blow-ups, and band curve complements.
Appendix A contains the Hausdorff gluing and
choice-independence verifications, while Appendix B records the ingredients
of Alp\"oge's complex-six-sphere construction that enter this paper.

\section{Period data and relative compactification}
\label{sec:relative-compactification}
\subsection{Period data and the smooth family}
\label{sec:data}

Throughout, all complex spaces are reduced unless otherwise stated.

A discrete action on a Hausdorff space is called properly discontinuous if,
for every pair of compact subsets, only finitely many group elements carry
the first to meet the second.

Let
\[
 B=\Pone,\qquad
 p_1=0,\quad p_2=1,\quad p_0=\infty,\qquad
 B^\circ=B\setminus\{p_0,p_1,p_2\}.
\]
We use Alp\"oge's oriented $(3,4,\infty)$ uniformization
\cite[Proposition~2.11(i)--(iii)]{AlpogeS6}: a copy $U$ of the upper
half-plane carries a properly discontinuous Fuchsian action of
\[
 \Gamma=\langle g_1,g_2\mid g_1^3=g_2^4=1\rangle,
\]
with fixed points $z_1,z_2\in U$ of $g_1,g_2$, respectively, such that
$U/\Gamma\simeq B\setminus\{p_0\}$, with finite orbifold orders $3$
and $4$ at $p_1,p_2$.  The element $g_0=(g_1g_2)^{-1}$ is a primitive
parabolic generator of the cusp stabilizer.  On
\[
 U^\circ=U\setminus(\Gamma z_1\cup\Gamma z_2)
\]
the action is free, and the orbit map
\begin{equation}
\label{eq:core-cover}
 \varpi:U^\circ\longrightarrow B^\circ
\end{equation}
is a holomorphic covering with deck group $\Gamma$.

Let
\[
 \Lambda=\Z\langle \widehat\gamma,\widehat u,
                      \widehat w,\widehat\delta\rangle.
\]
Write $\Lambda_\R=\Lambda\otimes_\Z\R$.
The homological monodromy matrices in this ordered basis are
\begin{equation}
\label{eq:A12}
 A_1=
 \begin{pmatrix}
 1&0&0&0\\
 6&0&1&0\\
 -6&-1&-1&0\\
 -2&1&0&1
 \end{pmatrix},
 \qquad
 A_2=
 \begin{pmatrix}
 1&0&0&0\\
 0&0&-1&0\\
 -6&1&0&0\\
 3&0&1&1
 \end{pmatrix}.
\end{equation}
Direct multiplication gives $A_1^3=A_2^4=I$.  The displayed presentation of
$\Gamma$ therefore yields a representation
$A:\Gamma\to\GL(\Lambda)$ by $A(g_j)=A_j$; its representation law is
$A(gh)=A(g)A(h)$.
The first row of each matrix in \eqref{eq:A12} is $(1,0,0,0)$.
Write $\gamma:\Lambda\to\Z$ for this invariant first-coordinate
functional.

\begin{proposition}
\label{prop:period-data-existence}
There are holomorphic functions
\[
 \tau,\mu,\beta_0:U\longrightarrow\C,
 \qquad \Im\tau>0,
\]
and holomorphic matrices $R_g:U\to\GL(2,\C)$ satisfying
\begin{align}
 R_{gh}(z)&=R_g(hz)R_h(z),
 \label{eq:R-cocycle}\\
 \Omega_a(gz)&=R_g(z)\Omega_a(z)A(g)^{-1}
 \label{eq:period-equivariance}
\end{align}
for every $g,h\in\Gamma$ and every $a\in\C$, where
\begin{equation}
\label{eq:period-matrix}
 \Omega_a(z)=
 \begin{pmatrix}
 6\mu(z)&\tau(z)&1&0\\
 \beta_0(z)+a&\mu(z)&0&1
 \end{pmatrix}.
\end{equation}
Let $t_c$ be the pullback to a sufficiently small lifted cusp neighbourhood
of Alp\"oge's local coordinate at $p_0$. There is a
holomorphic unit $u_1$ at $t_c=0$ such that
\begin{equation}
 \e^{2\pi\ii\tau}=t_cu_1(t_c).
 \label{eq:cusp-period-unit}
\end{equation}
After choosing a holomorphic logarithm of $u_1$, set
\begin{equation}
 h(t_c)=\frac{1}{2\pi\ii}\log u_1(t_c),
 \qquad s=\tau-h(t_c).
 \label{eq:cusp-corrected-period}
\end{equation}
Then $\e^{2\pi\ii s}=t_c$, and both $\mu$ and $\beta_0+\tau$ extend
holomorphically to $t_c=0$.

Set
\begin{equation}
\label{eq:D-a}
 \mathscr D_a(z)=\Im(\beta_0(z)+a)
        -\frac{6(\Im\mu(z))^2}{\Im\tau(z)}.
\end{equation}
The function $\mathscr D_0$ is
$\Gamma$-invariant, descends to $B\setminus\{p_0\}$, and tends to $-\infty$
as $p_0$ is approached.  Its descended function is continuous at the two
finite orbifold points; together with the cusp limit, this makes
$\mathscr D_0$ bounded above.
\end{proposition}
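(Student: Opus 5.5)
The existence statement is largely a transcription of Alpöge's construction into the normalized basis $\widehat\gamma,\widehat u,\widehat w,\widehat\delta$, so I would organize the proof as follows.

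\emph{Step 1: reduction to the data at $a=0$.} The matrix $\Omega_a$ differs from $\Omega_0$ only in the $(2,1)$ entry, by $a$. The first column of $\Omega_a A(g)^{-1}$ is $\Omega_a$ applied to the first column of $A(g)^{-1}$. Since every $A(g)$ has first row $(1,0,0,0)$, the functional $\gamma$ is invariant. Equivalently, $\Omega_a-\Omega_0=a\,E_{21}$ is the rank-one matrix $(0,a)^{T}\gamma$. Then \eqref{eq:period-equivariance} for $a$ follows from the case $a=0$ if and only if
\[
R_g(z)\begin{pmatrix}0\\ a\end{pmatrix}\gamma A(g)^{-1}=\begin{pmatrix}0\\ a\end{pmatrix}\gamma .
\]
Using $\gamma A(g)^{-1}=\gamma$, this holds exactly when the second column of $R_g$ is $(0,1)^T$. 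So I would prove the case $a=0$ with cocycle matrices $R_g$ that are lower triangular with $(2,2)$-entry $1$. Such a shape is natural: $\widehat\gamma$ spans an invariant sublattice, and its image under the period map is the $a$-translatable direction.

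\emph{Step 2: construction at $a=0$.} I would take Alpöge's period map on $U$ from \cite[Proposition~2.11]{AlpogeS6} and the surrounding construction (Appendix~\ref{app:source-six-sphere}), written in his basis. I would change to the ordered basis of $\Lambda$ in which his monodromies become \eqref{eq:A12}. Then I would normalize the right $2\times2$ block to the identity by left multiplication with a holomorphic $\GL(2,\C)$-valued function. This defines $\tau,\mu,\beta_0$ as the remaining entries and produces $R_g$ as the compensating factor. The cocycle law \eqref{eq:R-cocycle} is then formal: it follows from uniqueness of the normalization together with $A(gh)=A(g)A(h)$.

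The specific shape, with entries $6\mu$ and $\mu$ and $\Im\tau>0$, reflects Alpöge's polarization or abelian-surface normal form. I would check it by comparing the equivariance for $g_1,g_2$ entrywise. Checking the lower-triangular shape of $R_g$ amounts to verifying that the third and fourth columns of $A_j^{-1}$ have vanishing first entries, which can be read off from \eqref{eq:A12}.

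\emph{Step 3: cusp behaviour.} Near $p_0$, the parabolic $g_0=(g_1g_2)^{-1}$ acts by translation in a suitable coordinate. Computing $A(g_0)$ shows it is unipotent, and the equivariance gives $\tau(g_0z)=\tau(z)+1$ up to the $R$-normalization. Then $\e^{2\pi\ii\tau}$ is single valued in the punctured disc. Alpöge's cusp analysis shows it has a simple zero at $t_c=0$, which gives \eqref{eq:cusp-period-unit}, and the definition of $s$ immediately yields $\e^{2\pi\ii s}=t_c$. The same equivariance shows that $\mu$ and $\beta_0+\tau$ are $g_0$-invariant, so they descend to the punctured disc. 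Their removability follows from the boundedness near the cusp supplied by the source construction. I expect this to be the main obstacle, because it requires matching Alpöge's local cusp model precisely.

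\emph{Step 4: the function $\mathscr D_0$.} $\mathscr D_0$ is (a multiple of) the determinant-type quantity measuring positivity of the imaginary part of the period matrix. It transforms by $|\det R_g|^{-2}$ times itself, and $\det R_g=1$ is forced by the triangular shape if $\tau$ transforms by $R_{g,11}$. Hence $\mathscr D_0$ is invariant and descends to $B\setminus\{p_0\}$; continuity at the orbifold points is automatic. At the cusp, writing $\beta_0=-\tau+(\text{bounded})$ and $\Im\tau\sim-\frac{1}{2\pi}\log|t_c|\to+\infty$ gives $\mathscr D_0\to-\infty$. The $\mu^2$ term stays bounded because $\mu$ extends. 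Boundedness above follows from continuity on the compact base away from a cusp neighbourhood.
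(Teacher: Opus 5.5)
Your skeleton is the paper's. You import Alp\"oge's data at $a=0$ and pass to all $a$ through $\Omega_a=\Omega_0+ae_2\gamma$, $\gamma A(g)^{-1}=\gamma$ and $R_ge_2=e_2$; your Step~1 is exactly the paper's reduction. Three smaller points first.

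The period functions, including the $6\mu$ shape of the first entry, come from \cite[Theorem~3.4]{AlpogeS6}; Proposition~2.11 is only the uniformization. Comparing transformation laws entrywise cannot prove that shape: it only shows that the $(1,1)$ entry and $6\mu$ transform alike. So the shape must be imported, as the paper does.

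Your check of $R_ge_2=e_2$ is the wrong one. Since the right block of $\Omega_0$ is $I_2$, the second column of $R_g(z)$ is $R_g(z)\Omega_0(z)\widehat\delta=\Omega_0(gz)A(g)\widehat\delta$. What is needed is therefore $A(g)\widehat\delta=\widehat\delta$, i.e.\ the fourth column of each $A_j$ is $(0,0,0,1)^t$; this is the paper's argument. The vanishing of the first entries of columns three and four of $A_j^{-1}$ is automatic from the first row $(1,0,0,0)$. It does not constrain the remaining entries of the fourth column, which are what decide triangularity.

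The genuine failure is in Step~4: $\det R_g=1$ is false. From \eqref{eq:A12} and the right-block normalization one finds $R_{g_2}(z)=\bigl(\begin{smallmatrix}1/\tau&0\\-\mu/\tau&1\end{smallmatrix}\bigr)$, so $\det R_{g_2}=1/\tau$. Nor does $\mathscr D_0$ acquire a factor $|\det R_g|^{-2}$; it is invariant because of a cancellation.

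By \eqref{eq:real-determinant}, $\mathscr D_0=\det_{\R}\Omega_0/\Im\tau$. Taking real determinants in the equivariance, with $\det A_1=\det A_2=1$, gives $\det_{\R}\Omega_0(gz)=|\det R_g(z)|^2\det_{\R}\Omega_0(z)$. The first row of the equivariance, together with $R_{g,12}=0$ and the first row $(1,0,0,0)$ of $A(g)^{-1}$, exhibits $\tau(gz)$ as an integral M\"obius transform of $\tau(z)$ whose denominator is $R_{g,11}(z)^{-1}=\det R_g(z)^{-1}$. Because $\Im\tau>0$ is preserved, that integral $2\times2$ matrix has determinant $1$, hence $\Im\tau(gz)=|\det R_g(z)|^2\,\Im\tau(z)$. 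The two factors cancel in the quotient, which gives the $\Gamma$-invariance. Alternatively, cite \cite[Theorem~3.4(iv)]{AlpogeS6} for the invariance, as the paper does.

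With that repair, your continuity, cusp-limit and upper-bound arguments are correct, given the extension of $\mu$ and $\beta_0+\tau$ across the cusp.
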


\begin{proof}
In the ordered basis
$(\widehat\gamma,\widehat u,\widehat w,\widehat\delta)$ of
\eqref{eq:A12}, Alp\"oge~\cite[Theorem~3.4(i)--(v)]{AlpogeS6}
constructs $\tau,\mu$ and a solution $\beta_0$, unique up to an additive
constant, with the equivariance for $a=0$. Parts~(i)--(iii) give
\eqref{eq:cusp-period-unit}--\eqref{eq:cusp-corrected-period} and the stated
extensions of $\mu$ and $\beta_0+\tau$. In the present generator convention
the transformation laws are
\begin{equation}\label{eq:period-generator-laws}
 \tau(g_1z)=\frac{\tau(z)-1}{\tau(z)},\qquad
 \tau(g_2z)=-\frac1{\tau(z)},\qquad
 \tau(g_0z)=\tau(z)-1,
\end{equation}
\begin{equation}\label{eq:mu-generator-laws}
 \mu(g_1z)=\frac{1-\mu(z)}{\tau(z)},\qquad
 \mu(g_2z)=1+\frac{\mu(z)}{\tau(z)}.
\end{equation}
With $\varphi_1=2-6(1-\mu)^2/\tau$ and
$\varphi_2=-3-6\mu^2/\tau$, the remaining laws are
\begin{equation}\label{eq:beta-generator-laws}
 \beta_0(g_jz)=\beta_0(z)+\varphi_j(z),\qquad j=1,2.
\end{equation}
The same theorem gives $R_{g_0}=I_2$. With $e_2=(0,1)^t$, the
fixed-column identity $A(g)\widehat\delta=\widehat\delta$ and the
equivariance at $a=0$ give
\[
 R_g(z)e_2
 =R_g(z)\Omega_0(z)\widehat\delta
 =\Omega_0(gz)A(g)\widehat\delta
 =e_2.
\]
Since $\Omega_a=\Omega_0+a e_2\gamma$ and
$\gamma A(g)^{-1}=\gamma$, it follows that
\[
 R_g(z)\Omega_a(z)A(g)^{-1}
 =\Omega_0(gz)+a e_2\gamma=\Omega_a(gz)
\]
for every $a\in\C$, with the same cocycle $R_g$. Together with the
representation law for $A$, this gives the orbifold descent action.
The real-lattice condition is imposed in
\cref{prop:admissible-half-plane}.
Finally, Theorem~3.4(iv) gives the invariance, continuity at the
finite orbifold points, and cusp limit of $\mathscr D_0$.  These
properties yield its upper bound on $B\setminus\{p_0\}$.
\end{proof}

\begin{proposition}
\label{prop:admissible-half-plane}
Let
\[
 M_0=\sup_{z\in U}\mathscr D_0(z)<\infty.
\]
Then
\begin{equation}
\label{eq:admissible}
 \mathcal A:=\{a\in\C:\Im a<-M_0\}
\end{equation}
is a nonempty open half-plane, and for every $a\in\mathcal A$ the four columns
of $\Omega_a(z)$ are linearly independent over $\R$ for all $z\in U$.
\end{proposition}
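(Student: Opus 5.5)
Finiteness of $M_0$ comes from \cref{prop:period-data-existence}: $\mathscr D_0$ is $\Gamma$-invariant and descends to a function on $B\setminus\{p_0\}$. That function is continuous at the finite orbifold points, continuous elsewhere (it is built from holomorphic data with $\Im\tau>0$), and tends to $-\infty$ at $p_0$, so it is bounded above. Hence $M_0<\infty$, and since $M_0$ is a real number, $\mathcal A=\{\Im a<-M_0\}$ is a nonempty open half-plane. The real content is the $\R$-linear independence of the four columns.

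For the independence I would reduce to a single real determinant. The four columns of $\Omega_a(z)\in\C^{2\times4}$ are independent over $\R$ exactly when the $4\times4$ real matrix whose rows are the real and imaginary parts of the two rows of $\Omega_a$ is nonsingular. Write $x+\ii y$ for real/imaginary parts and order the columns $(\widehat\gamma,\widehat u,\widehat w,\widehat\delta)$. The columns $\widehat w,\widehat\delta$ contribute the identity block in the real parts and zeros in the imaginary parts. Expanding along these, the determinant equals, up to sign, the $2\times2$ determinant of the imaginary parts of the first two columns:
\[
\det\begin{pmatrix}6\Im\mu&\Im\tau\\ \Im(\beta_0+a)&\Im\mu\end{pmatrix}
=6(\Im\mu)^2-\Im\tau\,\Im(\beta_0+a).
\]
Equivalently, the columns are $\R$-dependent iff some real combination $c_1\widehat\gamma+c_2\widehat u$ has purely real image; the $\widehat w,\widehat\delta$ columns then absorb the real part.

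Dividing by $\Im\tau>0$, this determinant equals $-\Im\tau\cdot\mathscr D_a(z)$. Since $\mathscr D_a=\mathscr D_0+\Im a$ by \eqref{eq:D-a}, for $a\in\mathcal A$ we get
\[
\mathscr D_a(z)\le M_0+\Im a<0
\]
for every $z\in U$. So the determinant is strictly positive and the columns are independent. The only step needing care is the block reduction of the real $4\times4$ determinant (signs are irrelevant). The analytic input, boundedness of $\mathscr D_0$, is already supplied by \cref{prop:period-data-existence}, so I expect no serious obstacle.
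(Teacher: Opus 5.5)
Your proof is correct and follows the paper's: both reduce $\R$-linear independence to the real block determinant $\pm\det(\Im Z_a)=\mp\Im\tau\,\mathscr D_a$, and conclude from $\mathscr D_a=\mathscr D_0+\Im a\le M_0+\Im a<0$ together with $\Im\tau>0$. Your extra remarks on why $M_0$ is a finite real number only spell out what the paper takes directly from \cref{prop:period-data-existence}.
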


\begin{proof}
Equation \eqref{eq:D-a} gives
\[
 \mathscr D_a(z)=\mathscr D_0(z)+\Im a.
\]
If $\Im a<-M_0$, then $\mathscr D_a(z)<0$ for every $z$.

Write
\[
 Z_a=
 \begin{pmatrix}
 6\mu&\tau\\
 \beta_0+a&\mu
 \end{pmatrix},
 \qquad \Omega_a=[Z_a\mid I_2].
\]
A direct real block-determinant calculation gives
\begin{equation}
\label{eq:real-determinant}
 \det_{\R}\Omega_a=-\det(\Im Z_a)
 =\Im\tau\,\mathscr D_a.
\end{equation}
Here $\Im\tau>0$ and $\mathscr D_a<0$, so the determinant is nonzero.
\end{proof}

We call a parameter $a\in\mathcal A$ \emph{admissible}.

The period variation has one invariant lattice direction.
The invariant functional $\gamma$ has kernel
\[
 \Lambda_0=\ker\gamma
 =\Z\langle\widehat u,\widehat w,\widehat\delta\rangle.
\]
In particular, $\gamma A(g)=\gamma$ for every $g\in\Gamma$.
For any two admissible parameters,
\begin{equation}\label{nm:eq:invariant-direction}
 \Omega_a-\Omega_{a_*}=(a-a_*)e_2\gamma,\qquad
 \Omega_a\widehat\delta=e_2,\qquad R_ge_2=e_2.
\end{equation}
Translation by $u e_2$ therefore induces a compatible action
\[
 \Theta_{\exp(2\pi\ii u)}[z,\zeta_1,\zeta_2]
       =[z,\zeta_1,\zeta_2+u]
\]
on the quotients by $\Lambda_0$. Its extension across the finite and cusp
fillings will be checked from their formulas.

The equivariant period family determines the smooth core over the punctured
base. Fix a disc $\Delta\Subset\mathcal A$; the relative construction uses
parameters $a\in\Delta$.

Here $U$ is the orbifold universal cover, whereas
$U^\circ\to B^\circ$ is the ordinary covering
\eqref{eq:core-cover}, with deck group $\Gamma$. The space $U^\circ$
is not simply connected: a small loop around a deleted orbit point
has nonzero winding number about that point. The lattice quotient
below uses this marked covering and does not require its total space
to be simply connected.

Let $\Lambda$ act on $U^\circ\times\C^2\times\Delta$ by
\begin{equation}
\label{eq:lattice-action}
 \lambda\cdot(z,\zeta,a)
   =(z,\zeta+\Omega_a(z)\lambda,a).
\end{equation}

\begin{lemma}
\label{lem:relative-lattice-action}
The action \eqref{eq:lattice-action} is free and properly discontinuous.  Its
quotient
\[
 \widetilde{\mathcal J}_\Delta
   =(U^\circ\times\C^2\times\Delta)/\Lambda
\]
is a complex manifold, and the projection
\[
 \widetilde p:\widetilde{\mathcal J}_\Delta
   \longrightarrow U^\circ\times\Delta
\]
is a proper holomorphic submersion with fibres complex two-tori.
\end{lemma}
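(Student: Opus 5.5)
The plan is to treat the action \eqref{eq:lattice-action} as a family of lattice translations in the $\C^2$-factor. The key input is \cref{prop:admissible-half-plane}: for every $a\in\Delta\subset\mathcal A$ and every $z\in U^\circ$, the four columns of $\Omega_a(z)$ are $\R$-linearly independent. Hence $\Lambda_{z,a}:=\Omega_a(z)\Lambda$ is a full lattice in $\C^2\cong\R^4$, varying holomorphically in $(z,a)$.

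\textbf{Freeness.} The action preserves $(z,a)$. A fixed point would require $\Omega_a(z)\lambda=0$, and real independence of the columns forces $\lambda=0$. The action is by biholomorphisms, since for fixed $\lambda$ the map $(z,\zeta,a)\mapsto(z,\zeta+\Omega_a(z)\lambda,a)$ is holomorphic with holomorphic inverse given by $-\lambda$.

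\textbf{Proper discontinuity.} Let $K_1,K_2\subset U^\circ\times\C^2\times\Delta$ be compact, and let $L$ be the (compact) projection of $K_1\cup K_2$ to $U^\circ\times\Delta$. Consider the real-linear map $\Omega_a(z):\R^4\to\C^2=\R^4$. Its inverse depends continuously on $(z,a)\in L$, because the real determinant \eqref{eq:real-determinant}, namely $\Im\tau\,\mathscr D_a$, is nonvanishing and continuous there. So there is $c>0$ with $|\Omega_a(z)\lambda|\ge c|\lambda|$ for all $(z,a)\in L$ and $\lambda\in\Lambda_\R$. If $\lambda\cdot K_1\cap K_2\neq\emptyset$, then $|\Omega_a(z)\lambda|\le 2\sup_{K_1\cup K_2}|\zeta|$, which bounds $|\lambda|$. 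Hence only finitely many $\lambda$ qualify. This uniform lower bound is the one step needing care; the subtlety is that $\Delta$ need not be closed in $\mathcal A$. This causes no problem, since compactness of $L$ takes place inside $U^\circ\times\Delta$ itself. Alternatively one could use $\overline\Delta\subset\mathcal A$ to get the bound globally over $\overline\Delta$, though not over $U^\circ$, which is noncompact.

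\textbf{Manifold structure and properness.} A free, properly discontinuous holomorphic action on a complex manifold has a Hausdorff quotient that is a complex manifold, with the quotient map a covering. The projection to $U^\circ\times\Delta$ is equivariant, so it descends to a holomorphic $\widetilde p$. It is a submersion because, locally, the chart $(z,\zeta,a)$ gives the coordinate projection. For properness, take a compact $L\subset U^\circ\times\Delta$. The set $\{(z,\Omega_a(z)t,a):(z,a)\in L,\ t\in[0,1]^4\}$ is the continuous image of a compact set and surjects onto $\widetilde p^{-1}(L)$, since every $\zeta$ is congruent mod $\Omega_a(z)\Lambda$ to some $\Omega_a(z)t$ with $t\in[0,1]^4$. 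Hence $\widetilde p^{-1}(L)$ is compact.

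\textbf{Fibres.} The fibre over $(z,a)$ is $\C^2/\Omega_a(z)\Lambda$, a complex two-torus, because $\Omega_a(z)\Lambda$ is a rank-four lattice spanning $\R^4$.
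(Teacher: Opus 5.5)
Your proof is correct and follows the paper's argument essentially step for step: injectivity of $\Omega_a(z)$ on $\Lambda_\R$ gives freeness, a uniform lower bound for $\|\Omega_a(z)\lambda\|/\|\lambda\|$ on the compact projection gives proper discontinuity, and the standard quotient theorem gives the manifold structure and the submersion. The only variation is in properness, where you take the compact image of the fundamental parallelepipeds over $L$, whereas the paper uses the homeomorphism $(U^\circ\times\Delta)\times(\Lambda_\R/\Lambda)\cong\widetilde{\mathcal J}_\Delta$ (which it reuses later for connectedness); both arguments are equally valid here.
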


\begin{proof}
Freeness follows from the injectivity of the real-linear map
$\Omega_a(z):\Lambda_\R\to\C^2$, proved in
\cref{prop:admissible-half-plane}.

Let $K_1,K_2$ be compact subsets of
$U^\circ\times\C^2\times\Delta$.  Their projections to
$U^\circ\times\Delta$ lie in a compact set $K$.  The smallest singular value
of the real-linear map $\Omega_a(z)$ is a positive continuous function on
$K$, hence has a positive lower bound $c_K$.  If
$\lambda K_1\cap K_2\ne\varnothing$, then for suitable points with the same
$(z,a)$ the difference of their $\zeta$-coordinates is
$\Omega_a(z)\lambda$.  The compactness of $K_1,K_2$ bounds the norm of that
difference by a constant $C_K$.  Hence
\[
 c_K\|\lambda\|\le \|\Omega_a(z)\lambda\|\le C_K.
\]
Only finitely many lattice vectors satisfy this inequality.  Thus the action
is properly discontinuous, and Lee's holomorphic quotient theorem
\cite[Theorem~1.16]{LeeComplexManifolds} makes the quotient a complex
manifold.

For properness, set
\[
 \Phi:(U^\circ\times\Delta)\times(\Lambda_\R/\Lambda)
 \longrightarrow\widetilde{\mathcal J}_\Delta,
 \qquad
 ((z,a),[x])\longmapsto[z,\Omega_a(z)x,a].
\]
The map is well defined and bijective.  Its inverse is continuous because the
inverse real-linear matrices $\Omega_a(z)^{-1}$ depend continuously on
$(z,a)$.  Thus $\Phi$ is a homeomorphism over $U^\circ\times\Delta$.
Since $\Lambda_\R/\Lambda$ is compact, the projection is proper.  It is a
holomorphic submersion because locally it is the quotient of the coordinate
projection
$U^\circ\times\C^2\times\Delta\to U^\circ\times\Delta$ by translations.
\end{proof}

We next descend the lattice quotient by the deck group.

Equation \eqref{eq:period-equivariance} permits the action
\begin{equation}
\label{eq:Gamma-action}
 g\cdot[z,\zeta,a]=[gz,R_g(z)\zeta,a]
\end{equation}
on $\widetilde{\mathcal J}_\Delta$.  It is well defined because
\[
 R_g(z)\Omega_a(z)\lambda
   =\Omega_a(gz)A(g)\lambda.
\]
The group law follows from \eqref{eq:R-cocycle}.

\begin{proposition}
\label{prop:relative-core}
The action \eqref{eq:Gamma-action} is free and properly discontinuous, and the
quotient
\[
 \mathcal J_\Delta
   =\widetilde{\mathcal J}_\Delta/\Gamma
\]
is a connected complex manifold.  For every $a\in\Delta$, the parameter
slice $(\operatorname{pr}_\Delta\circ p^\circ)^{-1}(a)$ is connected.  The
induced map
\[
 p^\circ:\mathcal J_\Delta\longrightarrow B^\circ\times\Delta
\]
is a proper holomorphic submersion, and each fibre over a point of
$B^\circ\times\{a\}$ is a complex two-torus with local period matrix
$\Omega_a$.
\end{proposition}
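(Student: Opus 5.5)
The argument reduces every assertion to the corresponding property of the lattice quotient in \cref{lem:relative-lattice-action}, using that $\Gamma$ acts freely and properly discontinuously on $U^\circ$ and fixes the $\Delta$-coordinate. The map $\widetilde p$ intertwines the action \eqref{eq:Gamma-action} with the action $g\cdot(z,a)=(gz,a)$ on $U^\circ\times\Delta$. The latter is free and properly discontinuous because $\varpi$ in \eqref{eq:core-cover} is a covering with deck group $\Gamma$.

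\textbf{Freeness.} If $g\cdot[z,\zeta,a]=[z,\zeta,a]$, then projecting gives $gz=z$. Freeness on $U^\circ$ then forces $g=1$.

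\textbf{Proper discontinuity.} Take compact sets $K_1,K_2\subset\widetilde{\mathcal J}_\Delta$. Their images under $\widetilde p$ are compact sets $L_1,L_2\subset U^\circ\times\Delta$. If $gK_1\cap K_2\neq\varnothing$, then $gL_1\cap L_2\neq\varnothing$, and only finitely many $g$ satisfy this. The action is holomorphic, so Lee's quotient theorem \cite[Theorem~1.16]{LeeComplexManifolds} makes $\mathcal J_\Delta$ a complex manifold. The quotient map is a holomorphic covering, and $p^\circ$ is induced from $\widetilde p$.

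\textbf{Properness and submersion.} Locally over $B^\circ\times\Delta$, choose an evenly covered open set $V\times D$, with $V\subset B^\circ$ small and $\widetilde V$ one sheet of $\varpi^{-1}(V)$. The covering $\widetilde{\mathcal J}_\Delta\to\mathcal J_\Delta$ restricts to a biholomorphism
\[
\widetilde p^{-1}(\widetilde V\times D)\;\cong\;(p^\circ)^{-1}(V\times D).
\]
This holds because distinct deck translates of $\widetilde V\times D$ are disjoint. Under this identification $p^\circ$ agrees with $\widetilde p$. Hence $p^\circ$ is locally (over the base) a proper holomorphic submersion, and properness is local on the target. The fibre over $(\varpi(z),a)$ is $\C^2/\Omega_a(z)\Lambda$, a complex two-torus with period matrix $\Omega_a(z)$. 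A different choice of lift changes this to $R_g(z)\Omega_a(z)A(g)^{-1}$, which gives the same torus up to a linear isomorphism and a change of lattice basis.

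\textbf{Connectedness.} This is the step needing some care. The space $U^\circ$ is connected, since it is the upper half-plane minus a discrete set, and $\Delta$ is connected. The fibres of $\widetilde p$ are connected tori and $\widetilde p$ is a proper submersion. Therefore $\widetilde{\mathcal J}_\Delta$ is connected, and so is its quotient $\mathcal J_\Delta$. For a slice over $a$, run the same argument with $U^\circ\times\{a\}$: the slice is the image of $\widetilde p^{-1}(U^\circ\times\{a\})$. That set is a torus bundle over the connected space $U^\circ$, hence connected, and its image in the quotient is connected.

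The only mild obstacle is to ensure that the images of the $L_i$ do not meet infinitely many translates. This is handled by proper discontinuity on $U^\circ$, as stated in the uniformization data.
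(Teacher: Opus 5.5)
Your proof is correct and follows the paper's argument essentially step by step. Freeness and proper discontinuity come from the base action on $U^\circ$, Lee's quotient theorem gives the manifold structure, and properness and submersivity are checked over one sheet of an evenly covered set, where $p^\circ$ coincides with $\widetilde p$. The only difference is in the connectedness step: you use that $\widetilde p$ is an open surjection with connected torus fibres onto a connected base, whereas the paper reads connectedness directly off the homeomorphism $\Phi$ with $(U^\circ\times\Delta)\times(\Lambda_\R/\Lambda)$ from \cref{lem:relative-lattice-action}; both arguments are valid.
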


\begin{proof}
The action of $\Gamma$ on $U^\circ$ is free and properly discontinuous.  If an
element fixes a point of $\widetilde{\mathcal J}_\Delta$, it fixes its base
point in $U^\circ$, hence is the identity.  For compact subsets of
$\widetilde{\mathcal J}_\Delta$, their projections to $U^\circ$ are compact;
only finitely many group elements can make the projected compact sets meet.
Thus the action is properly discontinuous.  Lee's holomorphic quotient
theorem \cite[Theorem~1.16]{LeeComplexManifolds} makes the quotient a complex
manifold.  The homeomorphism in
\cref{lem:relative-lattice-action} identifies
$\widetilde{\mathcal J}_\Delta$ topologically with
$(U^\circ\times\Delta)\times(\Lambda_\R/\Lambda)$; all three factors are
connected.  Hence $\widetilde{\mathcal J}_\Delta$ is connected, and its
continuous surjective quotient by $\Gamma$ is connected.  If $a\in\Delta$ is
fixed, the same homeomorphism identifies the parameter slice upstairs with
$U^\circ\times(\Lambda_\R/\Lambda)$, which is connected; its quotient by
$\Gamma$ is exactly $(\operatorname{pr}_\Delta\circ p^\circ)^{-1}(a)$ and is
therefore connected.

Let $V\subset B^\circ$ be a simply connected relatively compact coordinate
disc.  Choose one connected lift $\widetilde V\subset U^\circ$.  Every
$\Gamma$-orbit over $V$ meets $\widetilde V$ in exactly one point, because
$V$ is simply connected and $U^\circ\to B^\circ$ is a covering.  Hence the
restriction over $V\times\Delta$ is isomorphic to the lattice family over
$\widetilde V\times\Delta$, so the restriction of $p^\circ$ is proper by
\cref{lem:relative-lattice-action}. Properness is local on the target for
maps between Hausdorff spaces, and submersivity follows in the same local
chart.
\end{proof}

\subsection{Local fillings}
\label{sec:local-fillings}

The smooth family has three missing fibres, two at the finite orbifold points
and one at the cusp. Alp\"oge~\cite[Theorem~5.4]{AlpogeS6} constructs the
fixed-parameter logarithmic fillings at the finite points. We apply the
same affine actions to the family of period matrices $\Omega_a$ and keep the
parameter explicit in the quotient and gluing maps.

For $j=1,2$, set $m_1=3$ and $m_2=4$.  The marked base has the following
local uniformization data at $p_j$:
\begin{enumerate}[label=\textup{(F\arabic*)}]
\item
\begin{equation}
\label{eq:finite-stabilizer}
 \operatorname{Stab}_\Gamma(z_j)=\langle g_j\rangle,
 \qquad |\langle g_j\rangle|=m_j.
\end{equation}
\item There is a $g_j$-invariant disc
$\widetilde D_j\subset U$ centred at $z_j$ such that
\begin{equation}
\label{eq:finite-disc-separation}
 g\widetilde D_j\cap\widetilde D_j=\varnothing
 \qquad\text{for every }g\in\Gamma\setminus\langle g_j\rangle.
\end{equation}
The two discs are chosen so that their images in $B$ have disjoint closures.
\item There is a holomorphic coordinate $s_j$ on $\widetilde D_j$ satisfying
\begin{equation}
\label{eq:sj-action}
 s_j(g_jz)=\e^{-2\pi\ii/m_j}s_j(z).
\end{equation}
\end{enumerate}
The stabilizer assertion follows from the triangle uniformization above, and
proper discontinuity gives the discs in \textup{(F2)}.  The Cayley coordinate
$s_j(z)=(z-z_j)/(z-\overline z_j)$ conjugates $g_j$ to the rotation in
\eqref{eq:sj-action} under the chosen inverse-vertex convention.
Let
\[
 \widetilde D_j^*=\widetilde D_j\setminus\{z_j\}.
\]
By \eqref{eq:finite-disc-separation}, the quotient map identifies
\begin{equation}
\label{eq:finite-local-quotient}
 \widetilde D_j^*/\langle g_j\rangle
 \xrightarrow{\ \cong\ }D_j^*:=\varpi(\widetilde D_j^*)\subset B^\circ.
\end{equation}
Let $D_j$ be the image of $\widetilde D_j$ in the coarse quotient
$U/\Gamma\cong B\setminus\{p_0\}$.  Then $D_j=D_j^*\cup\{p_j\}$, and the
invariant coordinate $t_j=s_j^{m_j}$ identifies
$\widetilde D_j/\langle g_j\rangle$ biholomorphically with $D_j$.

The twist vectors are
\begin{equation}
\label{eq:twists}
 v_1=\widehat\gamma+2\widehat u-4\widehat w,
 \qquad
 v_2=-\widehat\gamma-3\widehat u+3\widehat w.
\end{equation}
Direct multiplication using \eqref{eq:A12} gives
\begin{equation}
\label{eq:twist-invariant}
 A_1v_1=v_1,\qquad A_2v_2=v_2.
\end{equation}
The invariant first-coordinate functional introduced above satisfies
\[
 \gamma(\widehat\gamma)=1,
 \qquad
 \gamma(\widehat u)=\gamma(\widehat w)
 =\gamma(\widehat\delta)=0.
\]
Then
\begin{equation}
\label{eq:twist-gamma}
 \gamma\circ A_j=\gamma,
 \qquad
 \gamma(v_1)=1,
 \qquad
 \gamma(v_2)=-1.
\end{equation}

Fix $j\in\{1,2\}$.  Let $\Lambda$ act on
$\widetilde D_j\times\C^2\times\Delta$ by
\[
 \lambda\cdot(z,\zeta,a)
   =(z,\zeta+\Omega_a(z)\lambda,a),
\]
and set
\[
 \mathcal P_{j,\Delta}
   =(\widetilde D_j\times\C^2\times\Delta)/\Lambda.
\]
The argument of \cref{lem:relative-lattice-action} applies on
$\widetilde D_j\times\Delta$: the action is free and properly discontinuous,
and Lee's holomorphic quotient theorem
\cite[Theorem~1.16]{LeeComplexManifolds} makes $\mathcal P_{j,\Delta}$ a
complex manifold.

The map
\[
 (\widetilde D_j\times\Delta)\times(\Lambda_\R/\Lambda)
 \longrightarrow \mathcal P_{j,\Delta},\qquad
 ((z,a),[x])\longmapsto[z,\Omega_a(z)x,a],
\]
is a homeomorphism over $\widetilde D_j\times\Delta$.  Since
$\Lambda_\R/\Lambda$ is compact, the projection
\[
 \mathcal P_{j,\Delta}\longrightarrow\widetilde D_j\times\Delta
\]
is proper.  In a local quotient chart it is induced by the coordinate
projection from $\widetilde D_j\times\C^2\times\Delta$, so its differential is
surjective and it is a holomorphic submersion.

The finite local data induce the following affine action.

Set
\begin{equation}
\label{eq:finite-affine-action}
 \widetilde g_j[z,\zeta,a]
  =\left[g_jz,
    R_{g_j}(z)\zeta+
    \Omega_a(g_jz)\frac{v_j}{m_j},a\right].
\end{equation}
This formula is jointly holomorphic in $(z,\zeta,a)$.  It is well defined on
the lattice quotient: replacing $\zeta$ by $\zeta+\Omega_a(z)\lambda$ changes
the target by
\[
 R_{g_j}(z)\Omega_a(z)\lambda
   =\Omega_a(g_jz)A_j\lambda,
\]
which is an integral target period.  The formula has a holomorphic inverse
because its $m_j$-th power is the identity, as proved next.

\begin{lemma}
\label{lem:finite-free}
The biholomorphism $\widetilde g_j$ has order exactly $m_j$, and the action of
$\langle\widetilde g_j\rangle$ on $\mathcal P_{j,\Delta}$ is free.
\end{lemma}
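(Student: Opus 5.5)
Compute the iterates of $\widetilde g_j$ directly. By induction on $k$, using the cocycle law \eqref{eq:R-cocycle} and the equivariance \eqref{eq:period-equivariance} in the form $R_{g_j}(g_j^{i}z)\Omega_a(g_j^{i+1}z)=\Omega_a(g_j^{i+2}z)A_j$, the $k$-th iterate is
\[
 \widetilde g_j^{\,k}[z,\zeta,a]
 =\Bigl[g_j^kz,\;R_{g_j^k}(z)\zeta+\Omega_a(g_j^kz)\frac{1}{m_j}\sum_{i=0}^{k-1}A_j^{i}v_j,\;a\Bigr].
\]
Here the translation part is transported forward by $R_{g_j}$ at each step, and the equivariance turns $R\,\Omega_a$ into $\Omega_a\cdot A_j$. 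By \eqref{eq:twist-invariant}, $A_jv_j=v_j$, so the sum equals $k v_j/m_j$.

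For $k=m_j$ we have $g_j^{m_j}=1$. From the cocycle, $R_{1}=I$ follows from $R_1=R_1R_1$. The translation becomes $\Omega_a(z)v_j$ with $v_j\in\Lambda$, which is zero in the lattice quotient. Hence $\widetilde g_j^{\,m_j}=\mathrm{id}$.

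For freeness, suppose $\widetilde g_j^{\,k}$ fixes a point $[z,\zeta,a]$ with $0<k<m_j$. Since $g_j^k$ acts freely away from $z_j$ on $\widetilde D_j$ by \eqref{eq:sj-action}, the base point must be $z=z_j$. There $R_{g_j^k}(z_j)\zeta+\Omega_a(z_j)kv_j/m_j=\zeta+\Omega_a(z_j)\lambda$ for some $\lambda\in\Lambda$. This is the main obstacle: $R_{g_j^k}(z_j)$ need not be the identity, so the rotation part could absorb the translation.

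To handle it, I would apply the invariant functional. Write $\zeta=\Omega_a(z_j)x$ with $x\in\Lambda_\R$, using \cref{prop:admissible-half-plane}; this requires $\Omega_a(z_j)$ to be a real isomorphism, which holds since $\Lambda_\R$ and $\C^2$ both have real dimension $4$. Equivariance at the fixed point $z_j$ gives $R_{g_j^k}(z_j)\Omega_a(z_j)=\Omega_a(z_j)A_j^k$. The fixed-point equation then becomes $A_j^kx+kv_j/m_j=x+\lambda$ in $\Lambda_\R$. Applying $\gamma$ and using $\gamma A_j=\gamma$ together with \eqref{eq:twist-gamma}, we get $\pm k/m_j=\gamma(\lambda)\in\Z$. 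This is impossible for $0<k<m_j$, so the action is free.

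The same computation shows that no $\widetilde g_j^{\,k}$ with $0<k<m_j$ is the identity, so the order is exactly $m_j$. Alternatively, the base action $g_j^k\neq1$ already gives this. The only delicate point is the $\gamma$-argument at the fixed fibre $z=z_j$; elsewhere freeness is immediate from the base.
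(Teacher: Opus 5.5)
Your proof is correct and follows essentially the paper's route. The paper passes to the flat coordinate $\zeta=\Omega_a(z)x$ at the outset, so $\widetilde g_j$ becomes $(z,x,a)\mapsto(g_jz,A_jx+v_j/m_j,a)$; you iterate in $\zeta$ and switch to $x$ only at $z=z_j$, but the key step, applying $\gamma$ to $(I-A_j^k)x\equiv kv_j/m_j$ modulo $\Lambda$, is the same. One small typo: the identity in your induction should read $R_{g_j}(g_j^{i+1}z)\Omega_a(g_j^{i+1}z)=\Omega_a(g_j^{i+2}z)A_j$, since the argument of $R_{g_j}$ is off by one.
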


\begin{proof}
Use the real flat coordinate $x\in\Lambda_\R/\Lambda$ determined by
$\zeta=\Omega_a(z)x$.  Equation \eqref{eq:period-equivariance} transforms
\eqref{eq:finite-affine-action} into
\begin{equation}
\label{eq:flat-finite}
 (z,x,a)\longmapsto
 \left(g_jz,A_jx+\frac{v_j}{m_j},a\right).
\end{equation}
By \eqref{eq:twist-invariant}, $A_j^rv_j=v_j$ for every $r$.  Therefore the
$k$-th power has fibre translation
\[
 \frac{1}{m_j}\sum_{r=0}^{k-1}A_j^rv_j
   =\frac{k v_j}{m_j}.
\]
For $k=m_j$, the base rotation is the identity, $A_j^{m_j}=I$, and translation
by $v_j\in\Lambda$ is the identity on the torus.  Thus
$\widetilde g_j^{m_j}=1$.  For $1\le k<m_j$, the base rotation is nontrivial,
so the $k$-th power cannot be the identity; the order is exactly $m_j$.

A fixed point of a nonidentity power must lie over $z_j$, because the base
rotation has no other fixed point in $\widetilde D_j$.  At $z_j$, the fixed
point equation in the real torus is
\begin{equation}
\label{eq:finite-fixed-equation}
 (I-A_j^k)x\equiv \frac{k v_j}{m_j}\pmod\Lambda.
\end{equation}
Apply the integral functional $\gamma$.  Since
$\gamma\circ A_j=\gamma$, the left side has $\gamma$-value zero modulo
$\Z$.  The right side has value
\[
 \frac{k}{3}\quad(j=1),
 \qquad
 -\frac{k}{4}\quad(j=2),
\]
by \eqref{eq:twist-gamma}.  These numbers are not integers for
$1\le k<m_j$.  Hence \eqref{eq:finite-fixed-equation} has no solution.
\end{proof}

\begin{proposition}
\label{prop:relative-finite-filling}
The quotient
\[
 \mathcal N_{j,\Delta}
   =\mathcal P_{j,\Delta}/\langle\widetilde g_j\rangle
\]
is a connected complex manifold, and every parameter slice of
$\mathcal N_{j,\Delta}\to\Delta$ is connected.  It has a proper holomorphic
map
\[
 F_j:\mathcal N_{j,\Delta}\longrightarrow D_j\times\Delta,
\]
and its projection to $\Delta$ is a holomorphic submersion.  The fibre over
$p_j\times\{a\}$ has multiplicity $m_j$, and its reduced structure is a
compact smooth complex surface.
\end{proposition}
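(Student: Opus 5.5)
The plan is to obtain everything from \cref{lem:finite-free} together with the structure of $\mathcal P_{j,\Delta}$ established just before it. Since $\langle\widetilde g_j\rangle$ is a finite group of order $m_j$ acting freely and holomorphically on the complex manifold $\mathcal P_{j,\Delta}$, the action is automatically properly discontinuous. Lee's holomorphic quotient theorem \cite[Theorem~1.16]{LeeComplexManifolds} then makes $\mathcal N_{j,\Delta}$ a complex manifold, and the quotient map $q:\mathcal P_{j,\Delta}\to\mathcal N_{j,\Delta}$ is an unramified $m_j$-sheeted covering. Connectedness follows as in \cref{prop:relative-core}. The homeomorphism $\mathcal P_{j,\Delta}\cong(\widetilde D_j\times\Delta)\times(\Lambda_\R/\Lambda)$ shows that $\mathcal P_{j,\Delta}$ and each of its parameter slices $\widetilde D_j\times\{a\}\times(\Lambda_\R/\Lambda)$ are connected. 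Because $\widetilde g_j$ preserves $a$, the slices of $\mathcal N_{j,\Delta}$ are continuous images of these and are therefore connected.

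To construct $F_j$, I compose the proper submersion $\mathcal P_{j,\Delta}\to\widetilde D_j\times\Delta$ with the map $(z,a)\mapsto(t_j(z),a)=(s_j(z)^{m_j},a)$. By \eqref{eq:sj-action} this composite is $\widetilde g_j$-invariant, so it descends to a holomorphic map $F_j:\mathcal N_{j,\Delta}\to D_j\times\Delta$. For properness, take a compact $K\subset D_j\times\Delta$. The map $(z,a)\mapsto(s_j^{m_j},a)$ is finite and proper, so the preimage of $K$ in $\widetilde D_j\times\Delta$ is compact, and its preimage in $\mathcal P_{j,\Delta}$ is compact. Its image under the continuous map $q$ is then compact and equals $F_j^{-1}(K)$. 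The projection $\mathcal N_{j,\Delta}\to\Delta$ is a submersion because, locally through the covering $q$, it is the projection $\mathcal P_{j,\Delta}\to\Delta$, and that is the composite of two submersions.

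For the special fibre, fix $a$. Because $\widetilde D_j$ is $g_j$-invariant with $g_j$ fixing $z_j$, the set $T_a:=\{[z_j,\zeta,a]\}\cong\C^2/\Omega_a(z_j)\Lambda$ is a $\widetilde g_j$-invariant compact complex two-torus. It is a smooth hypersurface in the parameter slice, and $F_j^{-1}(p_j,a)$ is set-theoretically $q(T_a)$. Since the action is free, $q|_{T_a}:T_a\to T_a/\langle\widetilde g_j\rangle$ is an unramified covering. Hence the reduced fibre $T_a/\langle\widetilde g_j\rangle$ is a compact smooth complex surface, namely the bielliptic-type reduction.

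For the multiplicity, I compute locally upstairs. Near $T_a$ the function $t_j\circ F_j$, pulled back by $q$, equals $s_j^{m_j}$, and $s_j$ is a local defining function of $T_a$ with nonvanishing differential. Since $q$ is a local biholomorphism, the divisor $F_j^*(p_j)$ restricted to the slice is $m_j$ times the reduced fibre. The main step requiring care is this multiplicity claim. One has to make sure that the quotient does not reduce the vanishing order, and that the fibre of the slice map (rather than of the total space, which has the extra $\Delta$ direction) is what is being measured. I would handle both points by working in a $q$-chart near a point of $T_a$, where the coordinates are $(s_j,\zeta,a)$, and reading off $t_j=s_j^{m_j}$ directly.
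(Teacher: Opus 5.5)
Your proposal is correct and follows essentially the same route as the paper. Both arguments use Lee's quotient theorem for the free finite action, connectedness from the flat product homeomorphism, descent of $(z,a)\mapsto(s_j(z)^{m_j},a)$ with properness from the finiteness of this map and of the torus family, and the multiplicity read off from $t_j=s_j^{m_j}$ through the locally biholomorphic quotient map; the only cosmetic difference is that you measure the multiplicity on a fixed parameter slice rather than for the divisor $F_j^{-1}(p_j\times\Delta)$ in the total space, which is equivalent in the local coordinates $(s_j,\zeta,a)$.
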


\begin{proof}
The group is finite and acts freely, so Lee's holomorphic quotient theorem
\cite[Theorem~1.16]{LeeComplexManifolds} applies.
The flat-coordinate homeomorphism above identifies
$\mathcal P_{j,\Delta}$ with the product of the connected space
$\widetilde D_j\times\Delta$ and the connected real torus
$\Lambda_\R/\Lambda$.  Thus $\mathcal P_{j,\Delta}$ is connected, and its
continuous surjective finite quotient $\mathcal N_{j,\Delta}$ is connected.
After fixing $a$, the same product description gives the connected space
$\widetilde D_j\times(\Lambda_\R/\Lambda)$ upstairs; its finite quotient is
the parameter slice of $\mathcal N_{j,\Delta}$ and is connected.
Let
\[
 \rho_j:\widetilde D_j\times\Delta\longrightarrow D_j\times\Delta,
 \qquad (z,a)\longmapsto(s_j(z)^{m_j},a).
\]
The composite
\[
 \mathcal P_{j,\Delta}\longrightarrow
 \widetilde D_j\times\Delta
 \xrightarrow{\ \rho_j\ }D_j\times\Delta
\]
is invariant under $\widetilde g_j$, because
$s_j(g_jz)^{m_j}=s_j(z)^{m_j}$.  It therefore descends to the holomorphic
map $F_j$ in the statement.

The map $\rho_j$ is finite and hence proper.  If
$K\subset D_j\times\Delta$ is compact, its inverse image in
$\mathcal P_{j,\Delta}$ is compact because the torus family is proper.  The
image of this compact set in the finite quotient is exactly $F_j^{-1}(K)$.
Thus $F_j$ is proper.

The parameter projection upstairs is a submersion and is invariant under the
finite group.  Since the quotient map is locally biholomorphic, the descended
parameter projection is again a submersion.

Finally, the coarse base coordinate is $t_j=s_j^{m_j}$.  Its pullback upstairs
vanishes to order $m_j$ along the central torus.  The quotient map of the total
space is locally biholomorphic because the action is free.  Upstairs, the
reduced central divisor is the smooth torus family $s_j=0$; its quotient by
the free finite action is therefore smooth and is the reduction of
$F_j^{-1}(p_j\times\Delta)$.  Consequently this Cartier divisor has
multiplicity $m_j$ along its smooth reduction.
\end{proof}

On a simply connected sector of $\widetilde D_j^*$, choose a branch of
$\log s_j$ and set
\begin{equation}
\label{eq:sigma-ja}
 \sigma_{j,a}(z)=
 \frac{\log s_j(z)}{2\pi\ii}\,\Omega_a(z)v_j.
\end{equation}
The fibre translation is
\[
 h_j[z,\zeta,a]=[z,\zeta+\sigma_{j,a}(z),a].
\]
If the branch of the logarithm changes by $2\pi\ii n$, then
$\sigma_{j,a}$ changes by $n\Omega_a(z)v_j$, an integral period.  Hence the
translation on the torus quotient is independent of the branch.

\begin{lemma}
\label{lem:finite-conjugation}
On $\mathcal P_{j,\Delta}|_{\widetilde D_j^*\times\Delta}$ one has
\[
 h_j\widetilde g_jh_j^{-1}=\widetilde g_j^0,
 \qquad
 \widetilde g_j^0[z,\zeta,a]=[g_jz,R_{g_j}(z)\zeta,a].
\]
Consequently there is a jointly holomorphic biholomorphism
\begin{equation}
\label{eq:Gj}
 G_j:\mathcal N_{j,\Delta}|_{D_j^*\times\Delta}
 \xrightarrow{\ \cong\ }
 \mathcal J_\Delta|_{D_j^*\times\Delta}.
\end{equation}
\end{lemma}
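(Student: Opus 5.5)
The plan is to verify the conjugation identity by a direct computation in the marked coordinates $[z,\zeta,a]$. Then $h_j$ descends to the quotients, and the quotient by $\langle\widetilde g_j^0\rangle$ is matched with the restriction of $\mathcal J_\Delta$ using the disc separation \eqref{eq:finite-disc-separation}.

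\emph{Step 1: $h_j$ is a global biholomorphism.} On each simply connected sector, $h_j$ is a fibre translation by $\sigma_{j,a}(z)$. This is jointly holomorphic in $(z,\zeta,a)$ because $\Omega_a$ depends affinely on $a$. Going once around $z_j$ changes $\log s_j$ by $2\pi\ii$, so $\sigma_{j,a}$ changes by the integral period $\Omega_a(z)v_j$. Hence the sectorwise translations agree on the lattice quotient and define a biholomorphism of $\mathcal P_{j,\Delta}|_{\widetilde D_j^*\times\Delta}$ with inverse $[z,\zeta,a]\mapsto[z,\zeta-\sigma_{j,a}(z),a]$.

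\emph{Step 2: the key identity.} Compose the maps:
\[
 h_j\widetilde g_jh_j^{-1}[z,\zeta,a]
 =\Bigl[g_jz,\;R_{g_j}(z)\zeta-R_{g_j}(z)\sigma_{j,a}(z)+\Omega_a(g_jz)\tfrac{v_j}{m_j}+\sigma_{j,a}(g_jz),\;a\Bigr].
\]
It suffices that the last three terms form an integral period at $g_jz$. By \eqref{eq:period-equivariance} and \eqref{eq:twist-invariant},
\[
 R_{g_j}(z)\Omega_a(z)v_j=\Omega_a(g_jz)A_jv_j=\Omega_a(g_jz)v_j .
\]
By \eqref{eq:sj-action}, $\log s_j(g_jz)=\log s_j(z)-2\pi\ii/m_j+2\pi\ii n$ for some integer $n$ depending on the branches. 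Therefore
\[
 \sigma_{j,a}(g_jz)=R_{g_j}(z)\sigma_{j,a}(z)-\Omega_a(g_jz)\tfrac{v_j}{m_j}+n\,\Omega_a(g_jz)v_j .
\]
The three terms thus sum to $n\Omega_a(g_jz)v_j$, which vanishes in the torus. This proves $h_j\widetilde g_jh_j^{-1}=\widetilde g_j^0$. The point to check carefully is that the sign convention in \eqref{eq:sj-action} produces exactly $-1/m_j$, so that it cancels the affine twist.

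\emph{Step 3: descent and identification with $\mathcal J_\Delta$.} By Step 2, $h_j$ carries $\langle\widetilde g_j\rangle$-orbits to $\langle\widetilde g_j^0\rangle$-orbits. It therefore induces a biholomorphism
\[
 \mathcal N_{j,\Delta}|_{D_j^*\times\Delta}\;\cong\;\bigl(\mathcal P_{j,\Delta}|_{\widetilde D_j^*\times\Delta}\bigr)/\langle\widetilde g_j^0\rangle .
\]
The inclusion $\widetilde D_j^*\subset U^\circ$ gives a holomorphic map from this quotient to $\widetilde{\mathcal J}_\Delta/\Gamma=\mathcal J_\Delta$, since $\widetilde g_j^0$ is the restriction of the $g_j$-action \eqref{eq:Gamma-action}. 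Its image is $\mathcal J_\Delta|_{D_j^*\times\Delta}$, by \eqref{eq:finite-local-quotient}.

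For injectivity, suppose $g\cdot[z,\zeta,a]=[z',\zeta',a]$ with $z,z'\in\widetilde D_j^*$. Then $g\widetilde D_j\cap\widetilde D_j\ne\varnothing$, so $g\in\langle g_j\rangle$ by \eqref{eq:finite-disc-separation}. The map is then a bijective local biholomorphism, since both quotient maps are local biholomorphisms for free actions, and hence it is a biholomorphism. Composing with the map induced by $h_j$ yields $G_j$, which is jointly holomorphic in $a$ by Step 1.
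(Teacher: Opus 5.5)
Your proposal is correct and follows essentially the same route as the paper: the same computation $\sigma_{j,a}(g_jz)=R_{g_j}(z)\sigma_{j,a}(z)-\Omega_a(g_jz)v_j/m_j$ (up to an integral period) via \eqref{eq:period-equivariance} and $A_jv_j=v_j$, followed by identifying the untwisted $\langle g_j\rangle$-quotient with $\mathcal J_\Delta|_{D_j^*\times\Delta}$ through the disc separation \eqref{eq:finite-disc-separation}. Your explicit branch integer $n$ and your direct injectivity check spell out in more detail what the paper handles by its remark on branch changes and its ``precisely invariant'' argument.
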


\begin{proof}
From \eqref{eq:sj-action}, on a chosen sector one has
\[
 \log s_j(g_jz)=\log s_j(z)-\frac{2\pi\ii}{m_j}.
\]
Using \eqref{eq:period-equivariance} and $A_jv_j=v_j$, we get
\[
 R_{g_j}(z)\Omega_a(z)v_j
  =\Omega_a(g_jz)A_jv_j
  =\Omega_a(g_jz)v_j.
\]
Therefore
\begin{align*}
 \sigma_{j,a}(g_jz)
 &=\frac{\log s_j(g_jz)}{2\pi\ii}\,
   \Omega_a(g_jz)v_j\\
 &=R_{g_j}(z)\sigma_{j,a}(z)
   -\Omega_a(g_jz)\frac{v_j}{m_j}.
\end{align*}
Substituting this identity into the definitions of $h_j$ and
$\widetilde g_j$ gives the displayed conjugation formula term by term.

We now identify the untwisted quotient with the actual restriction of the
global core.  Let
\[
 \widetilde{\mathcal J}_{j,\Delta}^*
 =
 \widetilde{\mathcal J}_\Delta
 \big|_{\widetilde D_j^*\times\Delta}.
\]
The subgroup $\langle g_j\rangle$ acts on this space by the untwisted maps
$\widetilde g_j^0$.  If
$g\widetilde{\mathcal J}_{j,\Delta}^*
 \cap\widetilde{\mathcal J}_{j,\Delta}^*\ne\varnothing$,
then the projections to $U^\circ$ give
$g\widetilde D_j^*\cap\widetilde D_j^*\ne\varnothing$.
Equation \eqref{eq:finite-disc-separation} therefore implies
$g\in\langle g_j\rangle$, so
$\widetilde{\mathcal J}_{j,\Delta}^*$ is precisely invariant under
$\langle g_j\rangle$. The orbit maps are open, and Lee's local quotient
charts \cite[Theorem~1.16]{LeeComplexManifolds} give a biholomorphism
\[
 \widetilde{\mathcal J}_{j,\Delta}^*/\langle g_j\rangle
 \xrightarrow{\ \cong\ }
 \mathcal J_\Delta|_{D_j^*\times\Delta}.
\]
The affine quotient of the same pulled-back family is
$\mathcal N_{j,\Delta}|_{D_j^*\times\Delta}$.  Hence the conjugation descends
to \eqref{eq:Gj}.  Every formula is holomorphic in $(z,a)$, and branch changes
alter the translation by an integral period, so the descended map is jointly
holomorphic and single-valued.
\end{proof}

The two finite-point fillings are now identified with the smooth core on
their punctured discs. At the remaining point, the cusp quotient requires a
Hausdorff countable-fan model and a proper lattice action that remains
controlled as $q\to0$. It must also agree with the smooth core over the
punctured cusp disc.

We choose a simply connected cusp neighbourhood
$\mathcal H_0\subset U^\circ$ and a biholomorphic coordinate
\[
 s:\mathcal H_0\xrightarrow{\ \cong\ }
 \{w\in\C:\Im w>R_0\}
\]
for some real number $R_0$, satisfying
\[
 s(g_0z)=s(z)-1.
\]
The neighbourhood and coordinate can be chosen to obey
\begin{align}
 \operatorname{Stab}_\Gamma(\mathcal H_0)
   &=\langle g_0\rangle,\label{eq:cusp-stabilizer}\\
 g\mathcal H_0\cap\mathcal H_0
   &=\varnothing
   \qquad(g\in\Gamma\setminus\langle g_0\rangle).
   \label{eq:cusp-horodisc-separation}
\end{align}
Indeed, a sufficiently small horodisc at the parabolic fixed point
of $g_0$ is precisely invariant under $\langle g_0\rangle$ by the
discrete triangle-group action fixed above.
The function $s=\tau-h(t_c)$ constructed in
\cref{prop:period-data-existence} satisfies
$\e^{2\pi\ii s}=t_c$. On a smaller cusp disc, $h$ is bounded, so a
high enough half-plane in the $s$ coordinate lies inside that precisely
invariant horodisc. Its inverse image is $\mathcal H_0$, proving
\eqref{eq:cusp-stabilizer} and \eqref{eq:cusp-horodisc-separation}.
Here the first equality means that the setwise stabilizer of the cusp
neighbourhood is the infinite cyclic group generated by the primitive
parabolic element $g_0$. After increasing $R_0$ and replacing
$\mathcal H_0$ by the inverse image of the corresponding higher half-plane if
necessary, set
\[
 q=\e^{2\pi\ii s},
 \qquad
 \varepsilon_0=\e^{-2\pi R_0}.
\]
The function $q$ identifies the quotient
$\mathcal H_0/\langle g_0\rangle$ biholomorphically with the punctured disc
\begin{equation}
\label{eq:cusp-local-quotient}
 D_0^*=\{q\in\C:0<|q|<\varepsilon_0\}.
\end{equation}
Equation \eqref{eq:cusp-horodisc-separation} makes $\mathcal H_0$ precisely
invariant under $\langle g_0\rangle$, so the global orbit map embeds this
quotient biholomorphically in $B^\circ$; its image is identified with
$D_0^*$.  Let
\[
 D_0=D_0^*\cup\{p_0\}\cong\{|q|<\varepsilon_0\}.
\]
The three discs $D_0,D_1,D_2$ are chosen with pairwise disjoint closures.

In the splitting
\[
 \Lambda_{\tor}=\Z\langle\widehat w,\widehat\delta\rangle,
 \qquad
 \overline\Lambda=\Lambda/\Lambda_{\tor}
   =\Z\langle\widehat\gamma,\widehat u\rangle,
\]
write $\Omega_a=[Z_a\mid I_2]$. The nonconstant
$2\times2$ period block has the form
\begin{equation}
\label{eq:cusp-decomposition}
 Z_a=sB_0+C_a(q),
 \qquad
 B_0=
 \begin{pmatrix}0&1\\-1&0\end{pmatrix},
 \qquad
 C_a(q)=C_0(q)+aE_{21},
\end{equation}
where $C_0$ is holomorphic on a disc $D_{\varepsilon_0}$ about $q=0$ and
$E_{21}$ has a single $1$ in row $2$, column $1$.
To verify the decomposition, let $b=\beta_0+\tau$. The proof of
\cref{prop:period-data-existence} shows that $b$ and $\mu$ are
$g_0$-invariant and holomorphic in $q=t_c$ at $0$, and that
$\tau=s+h(q)$. Hence the displayed period matrix gives
\[
 C_0(q)=\begin{pmatrix}6\mu(q)&h(q)\\
                       b(q)-h(q)&\mu(q)\end{pmatrix},
\]
which is holomorphic at $q=0$. The same proposition computes
$R_{g_0}=I_2$, and the additive parameter contributes $aE_{21}$.
Since $q(g_0z)=q(z)$ and
$s(g_0z)=s(z)-1$, one has
\begin{equation}
\label{eq:cusp-deck-period}
 Z_a(g_0z)=Z_a(z)-B_0.
\end{equation}
These identities will be used in the punctured identification.

We recall the toric notation used for the periodic fan.

For a lattice $N$, write $M=\Hom(N,\Z)$ and
$N_\R=N\otimes_\Z\R$.  A \emph{rational polyhedral cone} is a strongly
convex cone generated by finitely many elements of $N$.  A
\emph{rational fan} $\Sigma$ is a collection of rational polyhedral cones
such that every face of a cone in $\Sigma$ belongs to $\Sigma$ and the
intersection of two cones in $\Sigma$ is a face of each.  No finiteness or
ambient local-finiteness condition is included in this definition.  For
$\sigma\in\Sigma$, set
\[
   U_\sigma=\Spec\C[\sigma^\vee\cap M]
\]
and use the same notation for its complex analytification.

For finite subfans we use the standard face-chart embeddings,
separated gluing, orbit--cone correspondence, and star-fan description of
toric divisors from Fulton~\cite[Sections~1.3--1.4, 2.1, 3.1 and~3.3]{Fulton}.
For the specific periodic fan below, the countable gluing and its torus
action are established by Alp\"oge~\cite[Lemmas~4.2--4.3]{AlpogeS6}.
We identify that fan with the present coordinates next.

Let $N=\Z^2$ with standard basis $e_1,e_2$, and triangulate $N_\R=\R^2$ by
the triangles
\begin{align*}
 P_v^+&=\operatorname{conv}\{v,v+e_1,v+e_2\},\\
 P_v^-&=\operatorname{conv}\{v+e_1,v+e_2,v+e_1+e_2\},
 \qquad v\in\Z^2.
\end{align*}
Let $N'=N\oplus\Z$.  For every vertex, edge, or triangle $P$ of the
triangulation, set
\[
 \sigma_P=\operatorname{cone}(P\times\{1\})\subset N'_\R,
\]
and set
\[
 \Sigma=\{0\}\cup\{\sigma_P:P\text{ is a cell of the triangulation}\}.
\]

\begin{proposition}
\label{prop:periodic-conical-fan}
The collection $\Sigma$ is a countable rational fan.  Every cone is generated
by part of a basis of $N'$.  For each vertex $v\in\Z^2$, the star fan of the
ray $\R_{\ge0}(v,1)$ is the finite complete hexagonal fan.  Consequently the
countable toric space
\[
 Y=Y_\Sigma
\]
is a smooth Hausdorff second-countable complex threefold, and the toric
divisor $E_v$ corresponding to the ray through $(v,1)$ is compact.
\end{proposition}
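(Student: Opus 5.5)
The plan is to verify the fan axioms directly from the triangulation, then read off smoothness, the star fans and compactness from standard toric facts.

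\emph{Fan axioms and unimodularity.} The triangles $P_v^\pm$ form the standard unimodular triangulation of $\R^2$ by the lines $x\in\Z$, $y\in\Z$, $x+y\in\Z$. Because this is a genuine triangulation, two cells meet in a common face, possibly empty. Lifting to height one, $P\mapsto\sigma_P$ sends faces to faces. Since every cone lies in the half-space of positive last coordinate except at $0$, the cones are strongly convex. Moreover $\sigma_P\cap\sigma_Q=\operatorname{cone}((P\cap Q)\times\{1\})$: a point of the intersection with last coordinate $t>0$ rescales to a point of $P\cap Q$ at height one. Hence $\Sigma$ is a fan, and it is countable because there are countably many cells. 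For unimodularity, take for instance $P_v^+$, whose cone has generators $(v,1),(v+e_1,1),(v+e_2,1)$. The matrix they form has determinant $\pm1$ after subtracting the first column from the others, so these three vectors form a basis of $N'$. The same computation works for $P_v^-$, and edges and vertices are subsets of these bases.

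\emph{Star fan and toric space.} The cells containing $v$ are the six triangles $P_v^+$, $P_{v-e_1}^-$, $P_{v-e_1}^+$, $P_{v-e_1-e_2}^-$, $P_{v-e_2}^+$, $P_{v-e_2}^-$, which fill a hexagon around $v$, together with their edges and faces. Projecting along $(v,1)$ to $N'/\Z(v,1)\cong\Z^2$, the images of the edge directions are $\pm e_1$, $\pm e_2$ and $\pm(e_1-e_2)$. These six rays span a complete smooth fan with six maximal cones, namely the hexagonal fan of the degree-six del Pezzo surface.

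Each $U_\sigma$ is then $\C^k\times(\C^*)^{3-k}$, because the cones are unimodular. The affine charts glue along face charts, as in Fulton. This gives a smooth complex threefold, which is second countable because there are countably many charts. For Hausdorffness I will use the separatedness criterion, which is pairwise: for cones $\sigma,\tau$ with $\sigma\cap\tau$ a common face, the chart $U_{\sigma\cap\tau}$ is closed in $U_\sigma\times U_\tau$ via the diagonal. This is exactly the finite-fan argument applied to the subfan generated by $\sigma$ and $\tau$. Since Hausdorffness is a condition on pairs of points, this suffices without any local finiteness. Alternatively I can cite Alp\"oge~\cite[Lemmas~4.2--4.3]{AlpogeS6} for the gluing.

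\emph{Compactness of $E_v$.} By the orbit--cone correspondence, $E_v$ is the closure of the orbit of the ray $(v,1)$. It is covered by the charts $U_\sigma$ for the finitely many cones $\sigma$ containing that ray. Consequently $E_v$ is the toric surface of the star fan, which is complete. A complete finite fan gives a compact toric variety, so $E_v$ is compact.

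The main obstacle is the point-set issue in the Hausdorff step, because the fan is infinite. I will handle it by the pairwise reduction to the two-cone subfans described above.
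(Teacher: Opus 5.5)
Your proof is correct, but it takes a different route from the paper's. The paper does no direct verification. It identifies the two families of triangles with Alp\"oge's periodic $A_2$-triangulation, whose height-one lattice is $N'=\Z^2\oplus\Z$, and cites \cite[Lemma~4.2(i)--(iv)]{AlpogeS6} for the fan axioms, unimodularity, the Hausdorff and second-countable gluing, and the hexagonal star fans.

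You replace that citation with a self-contained check:
\begin{itemize}
\item the determinant computation for $P_v^\pm$;
\item the explicit cyclic list of the six triangles around $v$, with edge directions $\pm e_1,\pm e_2,\pm(e_1-e_2)$ in $N'/\Z(v,1)$;
\item the observation that Hausdorffness involves only two charts at a time, so Fulton's finite-fan separatedness lemma applies to the subfan generated by $\sigma$ and $\tau$ even though $\Sigma$ is infinite.
\end{itemize}
The third point matters because the paper's definition of a fan deliberately omits local finiteness, and your reduction shows directly why this is harmless.

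One step should be made explicit. Inside the glued space you need $U_\sigma\cap U_\tau=U_{\sigma\cap\tau}$, which is the triple-overlap identity $U_{\sigma\cap\tau}\cap U_{\sigma\cap\rho}=U_{\sigma\cap\tau\cap\rho}$ in $U_\sigma$. This is what makes $U_\sigma\cup U_\tau$ an open copy of the two-chart toric variety. It holds for arbitrary fans, since it concerns only three cones at a time.

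Likewise, your compactness argument amounts to applying Fulton's orbit-closure and star-fan results inside the open toric subvariety of the finite subfan of cones containing $\R_{\ge0}(v,1)$ together with their faces. That subvariety contains all of $E_v$ because closure is local.

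As for what each approach buys: the citation is shorter and keeps the paper aligned with the source construction it later compares against in \cref{lem:alpoge-member}. Your version removes dependence on matching the source's conventions.
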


\begin{proof}
The two families of triangles above are exactly the periodic
$A_2$-triangulation used by Alp\"oge~\cite[Lemma~4.2(i)--(iv)]{AlpogeS6},
under the identification of its height-one lattice with
$N'=\Z^2\oplus\Z$.  In particular, that lemma proves the fan axioms,
unimodularity, Hausdorff and second-countable toric gluing, and the complete
hexagonal star fan of each ray through $(v,1)$.  The source's toric
threefold and divisors are therefore precisely $Y_\Sigma$ and $E_v$ in the
present notation.
\end{proof}

Let $h=(0,0,1)\in M'=\Hom(N',\Z)$.  Since $h$ is nonnegative on every cone,
the character $\chi^h$ is holomorphic on every affine toric chart and the
chartwise functions glue to
\[
 q:Y\longrightarrow\C.
\]
If a maximal cone is generated by the rays through
$(v_0,1),(v_1,1),(v_2,1)$ and $m_0,m_1,m_2$ is the dual basis, then
$h=m_0+m_1+m_2$.  In the affine coordinates $z_i=\chi^{m_i}$ one has
\begin{equation}
\label{eq:q-local}
 q=z_0z_1z_2.
\end{equation}
The divisor $q^{-1}(0)$ is therefore reduced with simple normal crossings.
The component $E_v$ corresponding to the ray through $(v,1)$ has the compact
hexagonal star fan described in
\cref{prop:periodic-conical-fan}.  We shall use repeatedly that $E_0$ is a
compact smooth toric surface.

All matrix norms in the definition below and in the cusp analysis are the
operator norms induced by the standard Euclidean norms on the relevant real
vector spaces.

\begin{definition}
\label{def:valid-cusp-cutoff}
Let $\Delta\Subset\mathcal A$ be a relatively compact parameter disc.  A
number $\varepsilon>0$ is called a \emph{valid cusp cutoff for $\Delta$} if
there exists a number $\varepsilon_1$ such that
\begin{enumerate}[label=\textup{(V\arabic*)}]
\item
\[
  0<\varepsilon<\varepsilon_1<\min\{1,\varepsilon_0\};
\]
\item with
\[
 M_C(\Delta,\varepsilon_1)
 =\sup_{\substack{|q|\le\varepsilon_1\\a\in\overline\Delta}}
   \bigl\|-2\pi\Im C_a(q)\bigr\|,
\]
one has
\begin{equation}
\label{eq:valid-cutoff-inequality}
 \frac{\|B_0^{-1}\|M_C(\Delta,\varepsilon_1)}
      {|\log\varepsilon|}\le\frac12;
\end{equation}
\item the closed subdisc $\overline D_\varepsilon=\{|q|\le\varepsilon\}$,
      viewed inside the original cusp coordinate disc
      $D_{\varepsilon_0}\cup\{p_0\}\subset B$, is disjoint from
      $\overline D_1\cup\overline D_2$.
\end{enumerate}
A number $\varepsilon_1$ satisfying these conditions is called an
\emph{auxiliary cutoff} for $\varepsilon$.
\end{definition}

\begin{lemma}
\label{lem:valid-cutoff-exists}
For every relatively compact parameter disc
$\Delta\Subset\mathcal A$, valid cusp cutoffs exist.  If $\varepsilon$ is
valid for $\Delta$ and $0<\varepsilon'<\varepsilon$, then
$\varepsilon'$ is valid for $\Delta$ with the same auxiliary cutoff.
\end{lemma}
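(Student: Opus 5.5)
The plan is to first fix the auxiliary cutoff and then shrink the cutoff until the logarithm in the denominator of \eqref{eq:valid-cutoff-inequality} is large enough. The only analytic input is that $M_C(\Delta,\varepsilon_1)$ is finite. Once that is known, condition (V2) is a one-sided bound in $|\log\varepsilon|$ alone. Conditions (V1) and (V3) are open conditions that only get easier as $\varepsilon$ decreases.

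First, I will choose $\varepsilon_1$ with $0<\varepsilon_1<\min\{1,\varepsilon_0\}$. I will also take it small enough that the closed disc $\{|q|\le\varepsilon_1\}$ inside $D_0$ is disjoint from $\overline D_1\cup\overline D_2$. This is possible because the three discs $D_0,D_1,D_2$ were chosen with pairwise disjoint closures. Hence the closed discs $\{|q|\le\varepsilon_1\}$ shrink to $p_0$, which lies outside the compact set $\overline D_1\cup\overline D_2$.

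Second, I will bound $M_C(\Delta,\varepsilon_1)$. By \eqref{eq:cusp-decomposition}, $C_a(q)=C_0(q)+aE_{21}$, and $C_0$ is holomorphic, hence continuous, on $D_{\varepsilon_0}$. Since $\varepsilon_1<\varepsilon_0$, the closed disc $\{|q|\le\varepsilon_1\}$ is a compact subset of $D_{\varepsilon_0}$. Also $\overline\Delta$ is compact because $\Delta\Subset\mathcal A$. The map $(q,a)\mapsto\|-2\pi\Im C_a(q)\|$ is continuous on the compact product, so its supremum $M_C$ is a finite nonnegative number. This is the only step with real content, and it is straightforward. The point to check is that the closed disc genuinely sits inside the domain of holomorphy of $C_0$, which the strict inequality $\varepsilon_1<\varepsilon_0$ guarantees.

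Third, I will choose $\varepsilon$. Set $K=\|B_0^{-1}\|M_C(\Delta,\varepsilon_1)<\infty$; note $\|B_0^{-1}\|=1$ since $B_0$ is a rotation. Pick any $\varepsilon$ with $0<\varepsilon<\varepsilon_1$ and $|\log\varepsilon|\ge 2K$, for instance $\varepsilon<\min\{\varepsilon_1,\e^{-2K}\}$. Since $\varepsilon<1$, $|\log\varepsilon|>0$ and the quotient is defined. Then (V1) and (V2) hold. For (V3), the disc $\{|q|\le\varepsilon\}$ is contained in $\{|q|\le\varepsilon_1\}$, which was already disjoint from $\overline D_1\cup\overline D_2$.

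For the monotonicity claim, let $\varepsilon$ be valid with auxiliary $\varepsilon_1$, and let $0<\varepsilon'<\varepsilon$. Then $0<\varepsilon'<\varepsilon_1$, so (V1) holds. In (V2), $M_C(\Delta,\varepsilon_1)$ does not depend on $\varepsilon'$, and $|\log\varepsilon'|>|\log\varepsilon|$ because both values lie in $(0,1)$. The left side of \eqref{eq:valid-cutoff-inequality} therefore decreases, and (V2) still holds. Finally, $\overline D_{\varepsilon'}\subset\overline D_\varepsilon$, so (V3) is inherited.
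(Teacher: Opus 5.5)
Your proposal is correct and follows essentially the same route as the paper. Both arguments fix an auxiliary $\varepsilon_1<\min\{1,\varepsilon_0\}$, get $M_C(\Delta,\varepsilon_1)<\infty$ from continuity of $C_0(q)+aE_{21}$ on the compact set $\overline D_{\varepsilon_1}\times\overline\Delta$, shrink $\varepsilon$ until $|\log\varepsilon|$ dominates, and obtain monotonicity from $|\log\varepsilon'|>|\log\varepsilon|$ and the nesting of closed discs. Your explicit threshold $\e^{-2K}$ and your building of (V3) into the choice of $\varepsilon_1$ are only cosmetic differences and are harmless.
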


\begin{proof}
Choose any
$0<\varepsilon_1<\min\{1,\varepsilon_0\}$.  Since $C_0$ is holomorphic on
$D_{\varepsilon_0}$, the function
$(q,a)\mapsto C_a(q)=C_0(q)+aE_{21}$ is continuous on the compact set
$\overline D_{\varepsilon_1}\times\overline\Delta$.  Thus
$M_C(\Delta,\varepsilon_1)<\infty$.  As
$\varepsilon\downarrow0$, the denominator $|\log\varepsilon|$ tends to
infinity, so condition~\textup{(V2)} holds for all sufficiently small
positive $\varepsilon<\varepsilon_1$.  The original cusp disc was chosen with
closure disjoint from the two finite discs; hence the same
is true for every sufficiently small closed subdisc.  This proves existence.

If $0<\varepsilon'<\varepsilon$, then
$|\log\varepsilon'|>|\log\varepsilon|$, so the left side of
\eqref{eq:valid-cutoff-inequality} can only decrease.  The smaller closed
cusp disc is contained in the larger one, so condition~\textup{(V3)} is also
preserved.  The same $\varepsilon_1$ is therefore an auxiliary cutoff for
$\varepsilon'$.
\end{proof}

The cusp is the only place where an infinite group acts on a non-quasi-compact
toric space.  We give separate proofs of joint holomorphic extension,
freeness, proper discontinuity, and properness of the quotient map.
The underlying fixed-parameter cusp filling, its free and proper lattice
quotient, the description of its central fibre, and its punctured gluing are
due to Alp\"oge~\cite[Theorem~4.5 and Propositions~4.6--4.7]{AlpogeS6}.
Here the matrices $C_a$ vary holomorphically with $a$, and the estimates below
are made uniform on the parameter disc $\Delta$.

Fix a valid cusp cutoff $\varepsilon$ for $\Delta$ in the sense of
Definition~\ref{def:valid-cusp-cutoff}, and choose one auxiliary cutoff
$\varepsilon_1$.  Set
\begin{equation}
\label{eq:MC-bound}
 M_C=M_C(\Delta,\varepsilon_1)
 =\sup_{\substack{|q|\le\varepsilon_1\\a\in\overline\Delta}}
 \bigl\|-2\pi\Im C_a(q)\bigr\|<\infty.
\end{equation}
By condition~\textup{(V2)} in
Definition~\ref{def:valid-cusp-cutoff},
\begin{equation}
\label{eq:epsilon-choice}
 \frac{\|B_0^{-1}\|M_C}{|\log\varepsilon|}\le\frac12.
\end{equation}
We now restrict every cusp object to this radius and write
\[
 D_0=D_\varepsilon=\{q\in\C:|q|<\varepsilon\},
 \qquad
 D_0^*=D_\varepsilon\setminus\{0\},
 \qquad
 Y_\varepsilon=q^{-1}(D_\varepsilon).
\]
On the uniformizing side take the smaller horodisc
\begin{equation}
\label{eq:smaller-horodisc}
 \mathcal H_{0,\varepsilon}
   =\{z\in\mathcal H_0:0<|q(z)|<\varepsilon\}.
\end{equation}
Since $|q(z)|=\exp(-2\pi\Im s(z))$, this is a connected horodisc.  It is
$\langle g_0\rangle$-invariant, and the quotient map $q$ induces a
biholomorphism
\begin{equation}
\label{eq:smaller-cusp-quotient}
 \mathcal H_{0,\varepsilon}/\langle g_0\rangle
 \xrightarrow{\ \cong\ }D_0^*.
\end{equation}
Moreover, if
$g\mathcal H_{0,\varepsilon}\cap\mathcal H_{0,\varepsilon}\ne\varnothing$,
then
$g\mathcal H_0\cap\mathcal H_0\ne\varnothing$; by
\eqref{eq:cusp-horodisc-separation}, this forces
$g\in\langle g_0\rangle$.  Thus the isolated-subgroup property survives the
shrinking.  Condition~\textup{(V3)} in
Definition~\ref{def:valid-cusp-cutoff} gives
$\overline D_0\cap(\overline D_1\cup\overline D_2)=\varnothing$.

Identify $\overline\Lambda$ with $\Z^2$ in the basis
$(\widehat\gamma,\widehat u)$.  For $\lambda\in\Z^2$, the lattice automorphism
\begin{equation}
\label{eq:shear}
 \phi_\lambda:N'\longrightarrow N',
 \qquad
 (y,r)\longmapsto(y+rB_0\lambda,r)
\end{equation}
preserves the fan $\Sigma$.  Let $\Phi_\lambda$ be the induced toric
automorphism of $Y$.  On the dense torus $(\C^*)^2\times\C^*$ it is
\[
 \Phi_\lambda(x,q)=(q^{B_0\lambda}x,q).
\]

For $(q,a)\in D_\varepsilon\times\Delta$, set
\begin{equation}
\label{eq:c-lambda}
 c_{a,\lambda}(q)
  =\exp(2\pi\ii C_a(q)\lambda)\in(\C^*)^2,
\end{equation}
where exponentiation is componentwise.

\begin{proposition}
\label{prop:joint-cusp-action}
For every $\lambda\in\Z^2$, the map
\begin{equation}
\label{eq:Psi-action}
 \Psi_\lambda(y,a)
  =\bigl(c_{a,\lambda}(q(y))\cdot\Phi_\lambda(y),a\bigr)
\end{equation}
is a biholomorphism of $Y_\varepsilon\times\Delta$. The dependence on
$(y,a)$ is holomorphic, and
\[
 \Psi_\lambda\Psi_\mu=\Psi_{\lambda+\mu}.
\]
On the dense torus,
\begin{equation}
\label{eq:Psi-dense}
 \Psi_\lambda(x,q,a)
 =\bigl(c_{a,\lambda}(q)q^{B_0\lambda}x,q,a\bigr).
\end{equation}
\end{proposition}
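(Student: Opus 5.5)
We have to show that \(\Psi_\lambda\) is a well-defined holomorphic map on all of \(Y_\varepsilon\times\Delta\), and then that the group law holds; invertibility will follow from the group law. The argument separates the two factors of \eqref{eq:Psi-action}: the toric automorphism \(\Phi_\lambda\) and the torus translation by \(c_{a,\lambda}(q(y))\).

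\textbf{Step 1: \(\Phi_\lambda\) is a toric automorphism.} The shear \(\phi_\lambda\) in \eqref{eq:shear} is a lattice automorphism of \(N'\), with inverse \(\phi_{-\lambda}\). It maps each height-one cell \(P\times\{1\}\) to its translate by \(B_0\lambda\in\Z^2\). The triangulation by the \(P_v^\pm\) is invariant under integral translations, so \(\phi_\lambda\) permutes the cones of \(\Sigma\). By the functoriality of toric gluing on each affine chart \(U_\sigma\to U_{\phi_\lambda(\sigma)}\), together with Alp\"oge's countable gluing (\cref{prop:periodic-conical-fan}), \(\phi_\lambda\) induces a biholomorphism \(\Phi_\lambda\) of \(Y\).

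The dual map on \(M'\) fixes \(h=(0,0,1)\), because \(h\circ\phi_\lambda(y,r)=r\). Hence \(q\circ\Phi_\lambda=q\), so \(\Phi_\lambda\) preserves \(Y_\varepsilon\). On the dense torus, the character computation gives \(\Phi_\lambda(x,q)=(q^{B_0\lambda}x,q)\).

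\textbf{Step 2: the translation factor.} The map \((y,a)\mapsto c_{a,\lambda}(q(y))\) is holomorphic from \(Y_\varepsilon\times\Delta\) into \((\C^*)^2\). This holds because \(C_a(q)=C_0(q)+aE_{21}\) is holomorphic on \(D_\varepsilon\times\Delta\), and \(D_\varepsilon\) lies in the disc of holomorphy since \(\varepsilon<\varepsilon_0\); the exponential is never zero.

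The dense two-torus \((\C^*)^2\subset(\C^*)^3\), namely the \(N\)-part, acts holomorphically on \(Y\) by the toric action, and this action preserves \(q\). The composite
\[
(y,a)\mapsto c_{a,\lambda}(q(y))\cdot\Phi_\lambda(y)
\]
is therefore holomorphic, preserves \(Y_\varepsilon\), and commutes with the projection to \(\Delta\). Restricting to the dense torus yields \eqref{eq:Psi-dense}.

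\textbf{Step 3: the group law.} It suffices to check \(\Psi_\lambda\Psi_\mu=\Psi_{\lambda+\mu}\) on the dense open torus, by the identity theorem, since both sides are holomorphic on the connected manifold \(Y_\varepsilon\times\Delta\). On the torus the \(q\)-coordinate is preserved. The left side evaluates to
\[
c_{a,\lambda}(q)\,q^{B_0\lambda}\,c_{a,\mu}(q)\,q^{B_0\mu}x .
\]
Multiplication in \((\C^*)^2\) is commutative, \(c_{a,\lambda}c_{a,\mu}=c_{a,\lambda+\mu}\) by linearity of \(\lambda\mapsto C_a(q)\lambda\), and \(q^{B_0\lambda}q^{B_0\mu}=q^{B_0(\lambda+\mu)}\). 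The two sides therefore agree.

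In particular \(\Psi_0=\mathrm{id}\), so \(\Psi_{-\lambda}\) is a holomorphic inverse and each \(\Psi_\lambda\) is a biholomorphism.

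\textbf{Main obstacle.} The delicate point is Step 1 on the non-quasi-compact space \(Y\). One must confirm that \(\phi_\lambda\) really permutes \(\Sigma\) and that the chartwise isomorphisms agree on overlaps of the countable gluing. I would check the first directly on the generators \((v,1)\mapsto(v+B_0\lambda,1)\) of the two families \(P_v^\pm\), which are sent to \(P^\pm_{v+B_0\lambda}\). The overlap compatibility then follows from the standard face-chart compatibility (Fulton), applied cone by cone.
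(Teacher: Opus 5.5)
Your proposal is correct and follows essentially the paper's route: the unit $c_{a,\lambda}(q(y))$ is holomorphic because $C_a$ is jointly holomorphic, the vertical torus translation is composed with the toric automorphism $\Phi_\lambda$, both preserve $q$, $c$ and the shears are additive, and $\Psi_{-\lambda}$ is the inverse. The one difference is the group law: you check it on the dense torus and extend by density, whereas the paper obtains it globally from the commutation of $\Phi_\lambda$ with the vertical torus action (because $\phi_\lambda$ is the identity on $N\oplus\{0\}$).
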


\begin{proof}
Alp\"oge~\cite[Lemma~4.2(i)]{AlpogeS6} gives the holomorphic
torus multiplication map $(\C^*)^2\times Y\to Y$ for this fan.  The map
\[
 (y,a)\longmapsto c_{a,\lambda}(q(y))
\]
is holomorphic because $C_a(q)=C_0(q)+aE_{21}$ is jointly holomorphic in
$(q,a)$.  Therefore torus translation by this point is jointly holomorphic on
$Y\times\Delta$.  Composing it with the toric biholomorphism $\Phi_\lambda$
gives \eqref{eq:Psi-action}.

Both the vertical torus action and $\Phi_\lambda$ preserve $q$.  Hence
$\Psi_\lambda$ preserves $Y_\varepsilon\times\Delta$.

The shear \eqref{eq:shear} restricts to the identity on the vertical sublattice
$N\oplus\{0\}$.  Hence $\Phi_\lambda$ commutes with the vertical torus action.
Moreover,
\[
 c_{a,\lambda}(q)c_{a,\mu}(q)=c_{a,\lambda+\mu}(q)
\]
because $C_a(q)$ is linear in the lattice vector.  The shears add:
$\phi_\lambda\phi_\mu=\phi_{\lambda+\mu}$.  These identities prove the group
law.  Taking $\mu=-\lambda$ shows that $\Psi_{-\lambda}$ is the inverse of
$\Psi_\lambda$.  Formula \eqref{eq:Psi-dense} is the corresponding expression
on the dense torus.
\end{proof}

Thus the action extends jointly holomorphically across the toric boundary.

For $0<|q|<\varepsilon$, set
\begin{equation}
\label{eq:R-and-Bqa}
 R_a(q)=-2\pi\Im C_a(q),
 \qquad
 B_{q,a}=B_0+\frac{R_a(q)}{\log|q|}.
\end{equation}
Here $R_a(q)$ is a real $2\times2$ matrix.

\begin{lemma}
\label{lem:uniform-invertibility}
For the radius $\varepsilon$ chosen in \eqref{eq:epsilon-choice}, there is a
constant $K>0$ such that
\[
 \|B_{q,a}\|+\|B_{q,a}^{-1}\|\le K
\]
for every $0<|q|<\varepsilon$ and every $a\in\overline\Delta$.
\end{lemma}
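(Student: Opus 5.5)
The bound is a Neumann-series perturbation of the constant invertible matrix $B_0$. Write $B_{q,a}=B_0(I+T_{q,a})$ with
\[
 T_{q,a}=\frac{B_0^{-1}R_a(q)}{\log|q|}.
\]
The goal is the uniform estimate $\|T_{q,a}\|\le\tfrac12$ on $0<|q|<\varepsilon$, $a\in\overline\Delta$. Everything then follows from standard operator-norm inequalities.

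To get the estimate on $T_{q,a}$, first use (V1): $|q|<\varepsilon<\varepsilon_1$ puts $(q,a)$ inside the compact set over which $M_C$ in \eqref{eq:MC-bound} is a supremum. Hence $\|R_a(q)\|\le M_C$. Next, $|q|<\varepsilon<1$ gives $\log|q|<\log\varepsilon<0$, so $|\log|q||>|\log\varepsilon|$. Combining these with submultiplicativity and \eqref{eq:epsilon-choice},
\[
 \|T_{q,a}\|\le\frac{\|B_0^{-1}\|M_C}{|\log|q||}\le\frac{\|B_0^{-1}\|M_C}{|\log\varepsilon|}\le\frac12 .
\]

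With this in hand, the Neumann series shows that $I+T_{q,a}$ is invertible and
\[
 \|(I+T_{q,a})^{-1}\|\le\frac{1}{1-\|T_{q,a}\|}\le 2 .
\]
Therefore
\[
 \|B_{q,a}^{-1}\|\le 2\|B_0^{-1}\|=2,
\]
using that $B_0$ is a rotation, so $B_0^{-1}=-B_0$ has norm $1$. On the other side,
\[
 \|B_{q,a}\|\le\|B_0\|\,(1+\|T_{q,a}\|)\le\tfrac32 .
\]
So $K=\tfrac72$ works; for a general invertible $B_0$ one would take $K=\tfrac32\|B_0\|+2\|B_0^{-1}\|$.

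There is no serious obstacle here. The only care needed is in the first step: checking that the strict inequalities in (V1) place every admissible $(q,a)$ inside the domain of the supremum defining $M_C$, and that the sign of $\log|q|$ is handled correctly, since (V1) gives $\varepsilon<1$.
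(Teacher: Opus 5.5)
Your proof is correct and follows the paper's argument: you factor $B_{q,a}=B_0(I+T_{q,a})$, use $\|R_a(q)\|\le M_C$ together with \eqref{eq:epsilon-choice} to get $\|T_{q,a}\|\le\frac12$, and conclude with the Neumann estimate. You also make explicit the inequality $|\log|q||>|\log\varepsilon|$, which the paper uses without stating it, and your constant $K=\tfrac72$ (from $\|B_0\|=\|B_0^{-1}\|=1$) is a harmless sharpening of the paper's bound $\|B_{q,a}\|\le\|B_0\|+M_C/|\log\varepsilon|$.
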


\begin{proof}
Set
\[
 E_{q,a}=B_0^{-1}\frac{R_a(q)}{\log|q|}.
\]
By \eqref{eq:MC-bound} and \eqref{eq:epsilon-choice},
\[
 \|E_{q,a}\|\le
 \frac{\|B_0^{-1}\|M_C}{|\log\varepsilon|}\le\frac12,
 \qquad B_{q,a}=B_0(I+E_{q,a}).
\]
The standard Neumann estimate gives $\|(I+E_{q,a})^{-1}\|\le2$.
Consequently,
\[
 \|B_{q,a}\|
 \le \|B_0\|+\frac{M_C}{|\log\varepsilon|},
 \qquad
 \|B_{q,a}^{-1}\|\le2\|B_0^{-1}\|.
\]
Their sum is bounded independently of $q$ and $a$.
\end{proof}

On the dense torus use the logarithmic modulus coordinate
\begin{equation}
\label{eq:y-coordinate}
 y(x,q)=\frac{\log|x|}{\log|q|}\in\R^2.
\end{equation}
Taking absolute values in \eqref{eq:Psi-dense} gives
\begin{equation}
\label{eq:y-translation}
 y(\Psi_\lambda(x,q,a))=y(x,q)+B_{q,a}\lambda.
\end{equation}

The next lemma is the point that controls mixed sequences with $q\ne0$ and
$q\to0$.

\begin{lemma}
\label{lem:log-bound}
Let $K\subset Y_\varepsilon$ be compact.  There exists $M_K>0$ such that
\[
 \|y(x,q)\|\le M_K
\]
for every $(x,q)\in K$ lying in the dense torus with $q\ne0$.
\end{lemma}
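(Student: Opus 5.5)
The plan is to reduce to a single affine toric chart and read $y$ off as an approximate barycentric combination of the vertices of the corresponding triangle. Two facts from \cref{prop:periodic-conical-fan} make this work: $Y$ is the union of the open affine charts $U_\sigma$, and every cone is a face of a maximal cone.

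\emph{Reduction to one chart.} Every cone of $\Sigma$ is a face of a maximal cone $\sigma_P=\operatorname{cone}(P\times\{1\})$ with $P$ a triangle. Hence the charts $U_{\sigma_P}$ for triangles $P$ cover $Y$. Since $K$ is compact, finitely many such charts cover it. By shrinking, I write $K=K_1\cup\dots\cup K_r$ with each $K_i$ compact and contained in one chart $U_{\sigma_i}$. It then suffices to prove the bound for a compact set $K\subset U_\sigma\cap Y_\varepsilon$ with $\sigma$ maximal, and to take the maximum of the finitely many constants.

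\emph{Computation in a maximal chart.} Let $\sigma$ be generated by $(v_0,1),(v_1,1),(v_2,1)$, with dual basis $m_0,m_1,m_2$ and coordinates $z_i=\chi^{m_i}$, which identify $U_\sigma\cong\C^3$. Because $K$ is compact there is a constant $C\ge1$ with $|z_i|\le C$ on $K$. By \eqref{eq:q-local} one has $q=z_0z_1z_2$.

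On the dense torus the coordinate $x_k$ is the character of $(e_k^*,0)\in M'$. Expanding this character in the dual basis gives
\[
 (e_k^*,0)=\textstyle\sum_i\langle (e_k^*,0),(v_i,1)\rangle m_i=\sum_i (v_i)_k\, m_i.
\]
Hence $\log|x_k|=\sum_i (v_i)_k\log|z_i|$ and $\log|q|=\sum_i\log|z_i|$.

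Set $L_i=-\log|z_i|$, so that $L_i\ge-\log C$. Then $L:=\sum_i L_i=-\log|q|$, and $L>|\log\varepsilon|>0$ because $|q|<\varepsilon<1$. The coordinate becomes
\[
 y_k=\frac{\sum_i (v_i)_kL_i}{\sum_iL_i}.
\]
If all $L_i$ were nonnegative, $y$ would be a convex combination of $v_0,v_1,v_2$ and would lie in the triangle $P$.

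\emph{The bound.} The only issue is that some $L_i$ may be negative, though never below $-\log C$. This gives
\[
 \sum_i|L_i|\le \sum_i L_i+6\log C.
\]
Therefore
\[
 |y_k|\le \max_i|v_i|\cdot\frac{\sum_i|L_i|}{\sum_iL_i}\le \max_i|v_i|\Bigl(1+\frac{6\log C}{|\log\varepsilon|}\Bigr).
\]
This constant depends only on the chart and on $K$. The denominator is bounded away from zero uniformly because $|q|<\varepsilon<1$ on $Y_\varepsilon$; this is the point that controls the mixed regime in which $q\to0$.

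The main obstacle is the exponent bookkeeping: one must check that $x$ expands in the $z_i$ with exponents given by the vertex coordinates $(v_i)_k$. Once that is verified, the remaining estimate is elementary.
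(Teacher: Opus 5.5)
Your proposal is correct and follows essentially the same route as the paper. Both arguments reduce to compact pieces inside finitely many maximal affine charts, write $y$ as the barycentric combination $\sum_i\ell_i(y)v_i$, and bound the possibly negative weights using $|z_i|\le C$ and $|\log|q||>|\log\varepsilon|$; your ratios $L_i/\sum_jL_j$ are exactly the paper's $\ell_i(y)$, and your factor $1+6\log C/|\log\varepsilon|$ plays the role of its $1+2c$.
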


\begin{proof}
For every point of $K$, choose a relatively compact open neighbourhood whose
closure lies in an affine chart associated with a maximal triangle.  A finite
number of these smaller neighbourhoods covers $K$.  Their intersections with
$K$ have compact closures inside the corresponding affine charts.  It is
therefore enough to prove the assertion on one such compact chart piece.

Write $P=\operatorname{conv}\{v_0,v_1,v_2\}$ for the corresponding maximal
triangle. Its cone is generated by $(v_i,1)$, which form a basis of $N'$.
For the dual basis $m_0,m_1,m_2$, write $z_i=\chi^{m_i}$. The affine
barycentric functions are
\[
 \ell_i(y)=\langle m_i,(y,1)\rangle.
\]
They satisfy
\[
 \ell_i(v_j)=\delta_{ij},
 \qquad
 \ell_0+\ell_1+\ell_2=1,
 \qquad
 y=\sum_{i=0}^2\ell_i(y)v_i.
\]
For a dense-torus point, the character formula gives
\begin{equation}
\label{eq:zi-barycentric}
 \log|z_i|=\ell_i(y)\log|q|.
\end{equation}

On the chosen compact chart piece, choose $R\ge1$ such that
$|z_i|\le R$ for all $i$, and set
\[
 c=\frac{\log R}{|\log\varepsilon|}.
\]
For every $0<|q|<\varepsilon<1$, Equation \eqref{eq:zi-barycentric} gives
\[
 \ell_i(y)\ge-c,\qquad
 \ell_i(y)=1-\sum_{j\ne i}\ell_j(y)\le1+2c.
\]
Consequently
\[
 \|y\|\le(1+2c)\sum_{i=0}^2\|v_i\|.
\]
Taking the maximum over the finite chart cover proves the lemma.
\end{proof}

\begin{proposition}
\label{prop:cusp-free}
The $\Z^2$-action \eqref{eq:Psi-action} on
$Y_\varepsilon\times\Delta$ is free.
\end{proposition}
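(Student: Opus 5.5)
Suppose $\Psi_\lambda(y,a)=(y,a)$ for some $\lambda\in\Z^2$ and some point $(y,a)\in Y_\varepsilon\times\Delta$. The plan is to show $\lambda=0$ by splitting into two cases: $q(y)\neq0$ and $q(y)=0$. Both $\Phi_\lambda$ and the vertical torus translation preserve $q$, so $\Psi_\lambda$ preserves each fibre of $q$. Each case can therefore be handled separately.

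\emph{Case $q(y)\ne0$.} The fibre $q^{-1}(q_0)$ with $q_0\ne0$ lies in the dense torus, because every toric boundary orbit lies in $q^{-1}(0)$ by \eqref{eq:q-local}. So $y=(x,q_0)$ with $x\in(\C^*)^2$. Taking the logarithmic modulus coordinate and using \eqref{eq:y-translation}, a fixed point forces $B_{q_0,a}\lambda=0$. \Cref{lem:uniform-invertibility} makes $B_{q_0,a}$ invertible, so $\lambda=0$. This case needs only the radius condition \eqref{eq:epsilon-choice}.

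\emph{Case $q(y)=0$.} Here $y$ lies in some torus orbit $O(\sigma_P)$ in the central fibre, where $P$ is a vertex, edge or triangle of the periodic triangulation. The toric automorphism $\Phi_\lambda$ maps $O(\sigma_P)$ to $O(\sigma_{P+B_0\lambda})$, because the shear $\phi_\lambda$ acts on the height-one slice by translation by $B_0\lambda$. The vertical torus translation by $c_{a,\lambda}(0)$ preserves every orbit. A fixed point therefore requires $P+B_0\lambda=P$ as cells of the triangulation. A bounded cell cannot equal a nonzero translate of itself, so $B_0\lambda=0$. Since $B_0$ is invertible, $\lambda=0$.

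The main point to check is the orbit--cone bookkeeping in the second case. One must confirm that $\Phi_\lambda$, induced by the fan automorphism $\phi_\lambda$, sends the orbit of $\sigma$ to the orbit of $\phi_\lambda(\sigma)$. One must also confirm that $\phi_\lambda(\sigma_P)=\sigma_{P+B_0\lambda}$. Both are standard for the countable fan, since the gluing and torus action are those of Alp\"oge~\cite[Lemmas~4.2--4.3]{AlpogeS6} and each statement is local to finitely many charts. The first case is immediate from the estimates already proved.
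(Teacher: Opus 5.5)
Your proposal is correct and is essentially the paper's own argument: the case $q\ne0$ goes through \eqref{eq:y-translation} and \cref{lem:uniform-invertibility}, and the case $q=0$ goes through the orbit--cone correspondence, the orbit-preserving factor $c_{a,\lambda}(0)$, the fact that no bounded cell equals a nonzero translate of itself, and the invertibility of $B_0$. Your extra check that every point with $q\ne0$ lies in the dense torus, so that $y(x,q)$ is defined there, is a detail the paper leaves implicit.
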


\begin{proof}
Suppose first that $q\ne0$ and $\Psi_\lambda(x,q,a)=(x,q,a)$.  Equation
\eqref{eq:y-translation} gives $B_{q,a}\lambda=0$.  By
\cref{lem:uniform-invertibility}, $\lambda=0$.

Now let $q=0$.  Torus orbits in the central fibre correspond to cells $P$ of
the height-one triangulation.  The toric automorphism $\Phi_\lambda$ sends the
orbit of $P$ to the orbit of $P+B_0\lambda$.  The additional torus translation
$c_{a,\lambda}(0)$ preserves each torus orbit.  Therefore a fixed point would
require
\[
 P+B_0\lambda=P.
\]
Every cell $P$ is bounded, so it is invariant under a translation only when
the translation vector is zero.  Since $B_0$ is invertible, $\lambda=0$.
\end{proof}

\begin{proposition}
\label{prop:cusp-proper-discontinuous}
The $\Z^2$-action on $Y_\varepsilon\times\Delta$ is properly discontinuous.
\end{proposition}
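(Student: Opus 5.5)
We argue by contradiction, reducing proper discontinuity to a uniform bound on $\|\lambda\|$. Fix compact sets $K_1,K_2\subset Y_\varepsilon\times\Delta$. Since $\Delta$ may be shrunk to a relatively compact subdisc containing the projections, we may assume the parameter projections lie in a compact subset of $\Delta$. Enlarging, we take $K_1=K_2=K\times L$ with $K\subset Y_\varepsilon$ and $L\subset\Delta$ compact. Suppose infinitely many distinct $\lambda_n\in\Z^2$ satisfy $\Psi_{\lambda_n}(K\times L)\cap(K\times L)\ne\varnothing$. We then have points $(y_n,a_n)\in K\times L$ with $\Psi_{\lambda_n}(y_n,a_n)\in K\times L$ and $\|\lambda_n\|\to\infty$. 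Passing to a subsequence, $y_n\to y_\infty\in K$ and $a_n\to a_\infty\in L$.

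\emph{Step 1: points with $q\ne0$ in the dense torus.} Suppose $y_n$ lies in the dense torus with $q_n\ne0$. Then $\Psi_{\lambda_n}(y_n)$ does too, because $\Psi_\lambda$ preserves $q$ and the torus. \Cref{lem:log-bound} bounds both $\|y(y_n)\|$ and $\|y(\Psi_{\lambda_n}y_n)\|$ by $M_K$. By \eqref{eq:y-translation} we get $\|B_{q_n,a_n}\lambda_n\|\le 2M_K$, and \cref{lem:uniform-invertibility} then gives $\|\lambda_n\|\le 2KM_K$.

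Points with $q\ne0$ that lie off the dense torus still need handling. Since $q=z_0z_1z_2$, the fibre $q^{-1}(q_n)$ with $q_n\ne0$ lies entirely in the dense torus, because no $z_i$ can vanish there. So this case does not occur. Thus the bound holds uniformly whenever $q\ne0$, and in particular only finitely many $\lambda$ arise from points with $q\ne0$.

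\emph{Step 2: the central fibre.} It remains to bound $\lambda$ for $y\in K\cap q^{-1}(0)$. Here I would use the combinatorics of the fan rather than the logarithmic coordinate. The set $K\cap q^{-1}(0)$ is compact. It is therefore covered by finitely many affine charts $U_{\sigma_P}$, so it meets only the torus orbits of finitely many cells $P$, all lying inside a bounded region $Q\subset\R^2$. As in \cref{prop:cusp-free}, $\Psi_\lambda$ maps the orbit of $P$ to the orbit of $P+B_0\lambda$. A point of $K$ mapped into $K$ therefore forces $(P+B_0\lambda)\cap Q\ne\varnothing$ for some $P\subset Q$, which gives $\|B_0\lambda\|\le\operatorname{diam}Q$. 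Only finitely many $\lambda$ satisfy this.

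Combining the two steps, only finitely many $\lambda$ move $K\times L$ into itself, which is proper discontinuity.

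\emph{Main obstacle.} The delicate point is uniformity in Step 1 as $q\to0$: the logarithmic coordinate $y$ degenerates, and $B_{q,a}$ has to stay uniformly invertible. Both are exactly what \cref{lem:log-bound,lem:uniform-invertibility} supply, and the latter uses the cutoff condition \eqref{eq:epsilon-choice}. The only care needed is checking that the chart-piece bound in \cref{lem:log-bound} applies to both $y_n$ and its image $\Psi_{\lambda_n}(y_n)\in K$. It does, since both points lie in $K$.
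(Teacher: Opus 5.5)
Your proof is correct and follows the paper's argument. Off the central fibre you bound $\|\lambda\|$ via \cref{lem:log-bound} and \cref{lem:uniform-invertibility}; on $q=0$ you use that $P+B_0\lambda=P'$ can hold for only finitely many $\lambda$ when $P,P'$ range over finitely many cells. Your remark that $q\ne0$ forces a point into the dense torus (via $q=z_0z_1z_2$) makes explicit a step the paper leaves implicit, while the subsequence/contradiction framing and the shrinking of $\Delta$ are unnecessary, since \cref{lem:uniform-invertibility} already holds on all of $\overline\Delta$.
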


\begin{proof}
Let $K_1,K_2\subset Y_\varepsilon\times\Delta$ be compact.  We must show that
only finitely many $\lambda\in\Z^2$ satisfy
\[
 \Psi_\lambda(K_1)\cap K_2\ne\varnothing.
\]

First consider intersections represented by points with $q\ne0$.  The
projections of $K_1,K_2$ to $Y_\varepsilon$ are compact, so
\cref{lem:log-bound} gives constants $M_1,M_2$ bounding the logarithmic
coordinates of the source and target points.  If
$p\in K_1$ and $\Psi_\lambda(p)\in K_2$, then
\eqref{eq:y-translation} gives
\[
 \lambda=B_{q,a}^{-1}(y(\Psi_\lambda p)-y(p)).
\]
By \cref{lem:uniform-invertibility},
\[
 \|\lambda\|\le K(M_1+M_2).
\]
Only finitely many integer vectors satisfy this bound.  Notice that this
argument remains valid for sequences $q\ne0$ tending to $0$; that is exactly
the content of \cref{lem:log-bound}.

It remains to consider intersections lying in $q=0$.  For $i=1,2$, let
\[
 K_i^0=\operatorname{pr}_{Y_\varepsilon}
 \bigl(K_i\cap(q^{-1}(0)\times\Delta)\bigr).
\]
Each $K_i^0$ is compact.  A finite collection of affine toric charts covers
each $K_i^0$, and every chosen chart contains only finitely many torus orbits,
corresponding to the faces of its cone.  Hence there are finite sets of cells
$\mathscr S_1,\mathscr S_2$ such that $K_i^0$ meets only the orbits associated
with cells in $\mathscr S_i$.  If an orbit meeting $K_1^0$ is sent to one
meeting $K_2^0$, then
\[
 P+B_0\lambda=P'
\]
for some $P\in\mathscr S_1$ and $P'\in\mathscr S_2$.  For each pair $(P,P')$
there is at most one $\lambda$, because $B_0$ is invertible.  Thus only
finitely many $\lambda$ occur on the central fibre as well.
\end{proof}

\begin{lemma}
\label{lem:bounded-region-compact}
If $K_y\subset\R^2$ is bounded and $0<r<\varepsilon$, then the closure in
$Y$ of
\[
 \mathcal S(K_y,r)=
 \{(x,q)\in(\C^*)^2\times\C^*:
       |q|\le r,\ y(x,q)\in K_y\}
\]
is compact.
\end{lemma}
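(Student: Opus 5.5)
The closure of $\mathcal S(K_y,r)$ is compact because this set lies inside a finite union of compact pieces of affine toric charts. Only finitely many charts are relevant because $y$ stays in a bounded set. The affine coordinates are bounded on those charts because $|q|\le r<1$ and the barycentric coordinates are bounded.

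Since $K_y$ is bounded, it is covered by finitely many closed triangles $P=\operatorname{conv}\{v_0,v_1,v_2\}$ of the height-one triangulation. Indeed, the triangulation is locally finite in $\R^2$, as a bounded set meets only finitely many of the unit triangles $P_v^\pm$. For a dense-torus point $(x,q)$ with $y=y(x,q)$ in a given $P$, use the chart $U_{\sigma_P}\cong\C^3$ with coordinates $z_i=\chi^{m_i}$ dual to the rays $(v_i,1)$. By \eqref{eq:zi-barycentric}, $\log|z_i|=\ell_i(y)\log|q|$. Here $\ell_i(y)\in[0,1]$ because $y\in P$, and $\log|q|\le\log r<0$. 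Hence $|z_i|=|q|^{\ell_i(y)}\le1$. So the part of $\mathcal S(K_y,r)$ with $y\in P$ lies in the closed polydisc $\overline{\mathbb D}^3\subset U_{\sigma_P}$. In addition, $|q|=|z_0z_1z_2|\le r$ on it.

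Let $Q_P=\{z\in U_{\sigma_P}:|z_i|\le1,\ |z_0z_1z_2|\le r\}$. This set is compact in $U_{\sigma_P}$, and its image in $Y$ is compact because the chart inclusion is continuous. Moreover $Q_P\subset Y_\varepsilon$ since $r<\varepsilon$. Therefore $\mathcal S(K_y,r)$ is contained in the finite union $\bigcup_P Q_P$, which is compact in $Y$. Since $Y$ is Hausdorff by \cref{prop:periodic-conical-fan}, this union is closed. It follows that the closure of $\mathcal S(K_y,r)$ is a closed subset of a compact set, hence compact. The closure taken in $Y$ also lies in $Y_\varepsilon$.

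No step here is a real obstacle; the two points to check are the following. First, the chosen triangles must actually cover $K_y$, including points on shared edges; any triangle containing such a point works. Second, every point of the dense torus has some $y\in\R^2$, so the bound $\ell_i\ge0$ on the relevant triangle applies. This is the converse use of the estimate in \cref{lem:log-bound}. There the charts bounded $y$; here bounded $y$ bounds the chart coordinates.
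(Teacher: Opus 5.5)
Your proof is correct and follows essentially the same route as the paper's. In both, the bounded set $K_y$ is covered by finitely many triangles of the locally finite triangulation, the identity $|z_i|=|q|^{\ell_i(y)}\le 1$ places each piece in a closed unit polydisc of the corresponding affine chart, and the closure then lies in a finite union of compact sets. Your explicit appeal to the Hausdorffness of $Y$ (needed for that finite union to be closed) and to $r<1$ (from (V1)) only spells out steps the paper leaves implicit.
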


\begin{proof}
It suffices to replace $K_y$ by $\overline{K_y}$, since
$\mathcal S(K_y,r)\subset\mathcal S(\overline{K_y},r)$.  We may therefore
assume that $K_y$ is compact.  It meets only finitely many triangles of the
locally finite triangulation.  For each $y\in K_y$, choose a triangle
$P=\operatorname{conv}\{v_0,v_1,v_2\}$ containing $y$.  In the corresponding
affine chart, the barycentric functions satisfy $\ell_i(y)\ge0$ and
$\sum_i\ell_i(y)=1$.  Equation \eqref{eq:zi-barycentric} gives
\[
 |z_i|=|q|^{\ell_i(y)}\le1.
\]
Thus the part of $\mathcal S(K_y,r)$ associated with $P$ lies in the closed
unit polydisc of the chart $U_P\cong\C^3$.  Only finitely many triangles are
needed, so the closure is contained in a finite union of compact closed
polydiscs.  It is closed in that finite union and is therefore compact.
\end{proof}

\begin{proposition}
\label{prop:compact-fundamental}
Let $K\subset D_\varepsilon\times\Delta$ be compact.  There exists a compact
set
\[
 C_K\subset (q,a)^{-1}(K)\subset Y_\varepsilon\times\Delta
\]
meeting every $\Z^2$-orbit over $K$.
\end{proposition}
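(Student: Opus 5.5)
The plan is to take as $C_K$ a set of points whose logarithmic coordinate lies in a fixed bounded fundamental region, together with a compact piece of the central fibre, and then to check compactness via \cref{lem:bounded-region-compact}. Since $K$ is compact in $D_\varepsilon\times\Delta$, choose $r<\varepsilon$ with $|q|\le r$ on $K$. After shrinking $\Delta$'s closure if needed, assume the projection of $K$ to $\Delta$ lies in a compact subset of $\Delta$; the uniform bound of \cref{lem:uniform-invertibility} on $\overline\Delta$ is what we use. Let $Q=[0,1]^2$ and put
\[
 K_y=\{B\xi:\ \|B\|\le K,\ \|B^{-1}\|\le K,\ \xi\in Q\}\subset\R^2,
\]
which is bounded, say by $K\sqrt2$. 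Take $K_y$ to be the closed ball of radius $K\sqrt2+1$.

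For the fibres with $q\ne0$, fix a dense-torus point $(x,q,a)$ with $(q,a)\in K$. By \eqref{eq:y-translation}, the orbit has logarithmic coordinates $y+B_{q,a}\lambda$, $\lambda\in\Z^2$. Since $B_{q,a}\Z^2$ is a lattice with fundamental parallelogram $B_{q,a}Q\subset K_y$, some $\lambda$ brings $y$ into $B_{q,a}Q$. Points over $q\ne0$ not in the dense torus do not occur, because $q^{-1}(D_\varepsilon^*)$ lies in the dense torus: each chart has $q=z_0z_1z_2\ne0$. So every orbit over $K$ with $q\ne0$ meets $\mathcal S(K_y,r)\times\Delta$.

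For the central fibre, the orbits of $\Phi_\lambda$ (combined with torus translations) act on cells by $P\mapsto P+B_0\lambda$, and $B_0\Z^2=\Z^2$. Every cell is therefore translated to one meeting the unit square, of which there are finitely many. Let $F$ be the union of the closures of the corresponding affine charts' unit polydiscs intersected with $q=0$. This is a finite union of compact sets. I still need that every central-fibre point, after some $\Psi_\lambda$, lands in a unit polydisc of some chart. A point of an orbit $O_P$ with $P$ near the square has coordinates $z_i$ which may be large. To handle this, I will instead note that each central-fibre point is a limit of dense-torus points with $y\in$ a translate of $P$. Alternatively, and more simply, I can use the torus action: within the orbit closure $E_v$ compactness is automatic. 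Since each $E_v$ is compact by \cref{prop:periodic-conical-fan}, take $F=\bigcup_{v\in\{0,e_1,e_2,e_1+e_2\}}E_v$, which is compact. The central fibre is $\bigcup_v E_v$, and each $E_v$ is moved to $E_{v+B_0\lambda}$, so every central orbit meets $F$.

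Finally set
\[
 C_K=\bigl(\overline{\mathcal S(K_y,r)}\cup F\bigr)\times\pi_\Delta(K)\ \cap\ (q,a)^{-1}(K).
\]
This is compact by \cref{lem:bounded-region-compact} and closedness, and it meets every orbit over $K$. The point I expect to need the most care is the uniformity that puts $B_{q,a}Q$ inside a single bounded $K_y$ independent of $(q,a)$, which is exactly what \cref{lem:uniform-invertibility} provides. A secondary point is checking that $\Psi_\lambda$ indeed permutes the $E_v$ by $v\mapsto v+B_0\lambda$, which follows from the shear \eqref{eq:shear} sending the ray $(v,1)$ to $(v+B_0\lambda,1)$.
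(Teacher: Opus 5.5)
Your proposal is correct and follows essentially the same route as the paper: a fundamental parallelogram $B_{q,a}Q$ that \cref{lem:uniform-invertibility} keeps inside one bounded set $K_y$, compactness of $\overline{\mathcal S(K_y,r)}$ from \cref{lem:bounded-region-compact}, and a compact toric divisor to catch every central-fibre orbit. The paper uses $E_0$ alone, which already suffices because $B_0\Z^2=\Z^2$, so your extra divisors and the abandoned polydisc detour are unnecessary; your check that $q^{-1}(D_\varepsilon^*)$ lies in the dense torus is a point the paper leaves implicit, and you should rename the constant from \cref{lem:uniform-invertibility} so it does not clash with the compact set $K$.
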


\begin{proof}
Choose $r<\varepsilon$ and a compact set $K_a\Subset\Delta$ such that
$K\subset\overline D_r\times K_a$.  For $0<|q|\le r$ and $a\in K_a$, set
\[
 \mathcal P_{q,a}=B_{q,a}([-1/2,1/2]^2).
\]
By \cref{lem:uniform-invertibility}, all these parallelograms lie in one
bounded set $K_y\subset\R^2$.

Let $(x,q,a)$ have $q\ne0$.  Choose $\lambda\in\Z^2$ such that
\[
 B_{q,a}^{-1}y(x,q)+\lambda\in[-1/2,1/2]^2.
\]
Then \eqref{eq:y-translation} gives
\[
 y(\Psi_\lambda(x,q,a))
 =B_{q,a}\bigl(B_{q,a}^{-1}y(x,q)+\lambda\bigr)
 \in K_y.
\]
Therefore the compact set
\[
 C^*=\overline{\mathcal S(K_y,r)}\times K_a
\]
meets every orbit with $q\ne0$ over the larger box
$\overline D_r\times K_a$, by
\cref{lem:bounded-region-compact}.

Now consider $q=0$.  Let a point lie in the orbit corresponding to a cell
$P$.  Choose a vertex $v$ of $P$.  Since $B_0\in\GL(2,\Z)$, there is a unique
$\lambda\in\Z^2$ with $B_0\lambda=-v$.  The translated cell
$P+B_0\lambda$ contains $0$.  Hence the translated torus orbit is contained
in the component $E_0$ of the central fibre.  The torus factor
$c_{a,\lambda}(0)$ preserves that orbit and therefore does not alter this
conclusion.  Thus the compact set $E_0\times K_a$ meets every central-fibre
orbit.

Set
\[
 C_{\mathrm{box}}=C^*\cup(E_0\times K_a).
\]
It is compact and meets every orbit over $\overline D_r\times K_a$.  Because
the group action preserves $(q,a)$, the set
\[
 C_K=C_{\mathrm{box}}\cap(q,a)^{-1}(K)
\]
is compact and still meets every orbit over $K$.
\end{proof}

\begin{lemma}
\label{lem:toric-central-normalization}
Let
\[
 D_Y=q^{-1}(0)\subset Y.
\]
Then $D_Y$ is reduced, and its normalization is the disjoint union
\begin{equation}
\label{eq:DY-normalization}
 \nu_Y:\coprod_{v\in\Z^2}E_v\longrightarrow D_Y
\end{equation}
of its smooth irreducible toric components.

\end{lemma}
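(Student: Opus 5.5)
The argument is local on affine toric charts, where $D_Y$ has normal crossings. Every point of $Y$ lies in a chart $U_\sigma\cong\C^3$ for a maximal cone $\sigma$ generated by $(v_0,1),(v_1,1),(v_2,1)$; this uses the unimodularity from \cref{prop:periodic-conical-fan}. In that chart \eqref{eq:q-local} gives $q=z_0z_1z_2$. Hence $D_Y\cap U_\sigma$ is the union of the three coordinate hyperplanes $\{z_i=0\}$. The ideal $(z_0z_1z_2)$ is radical, so the complex-space structure $q^{-1}(0)$ is reduced on each chart, and hence on $Y$.

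Next I identify the chart hyperplanes with the toric divisors. By the orbit--cone correspondence \cite{Fulton}, $\{z_i=0\}\cap U_\sigma$ is the closure in $U_\sigma$ of the codimension-one orbit of the ray through $(v_i,1)$. In other words, it is $E_{v_i}\cap U_\sigma$. So the irreducible components of $D_Y$ are exactly the divisors $E_v$ with $v\in\Z^2$. Distinct $v$ give distinct rays, hence distinct components. Every $v$ occurs, because every vertex lies in some triangle. Each $E_v$ is smooth: in each chart it is a coordinate hyperplane. Equivalently, it is the smooth compact toric surface of the hexagonal star fan. Each $E_v$ is irreducible because it is the closure of a single torus orbit.

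Finally I build the normalization. Define $\nu_Y$ on each $E_v$ as the inclusion. Then I check the defining properties.
\begin{itemize}
\item The source $\coprod_v E_v$ is smooth, hence normal.
\item $\nu_Y$ is finite. Over $U_\sigma$, only the three components $E_{v_0},E_{v_1},E_{v_2}$ meet the chart, since the other rays are not faces of $\sigma$. So the preimage of $U_\sigma$ is the disjoint union of three closed hyperplanes, each mapping properly onto its image. Fibres have at most three points.
\item $\nu_Y$ is biholomorphic over the complement of the pairwise intersections $E_v\cap E_{v'}$, where $D_Y$ is smooth.
\item The double locus $\{z_iz_j=0\}$ is nowhere dense in $D_Y$.
\end{itemize}
Together these say that $\nu_Y$ is a finite bimeromorphic map from a normal space. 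By uniqueness of normalization it is the normalization of $D_Y$.

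The main obstacle is the non-quasi-compactness of $Y$. Local finiteness of the family $\{E_v\}$ is needed for $\nu_Y$ to be proper, and hence finite. I would get it from the fact that each affine chart meets only the finitely many divisors indexed by rays of its cone. Properness is local on the target, so this settles the issue and the argument reduces entirely to the finite chart computation above.
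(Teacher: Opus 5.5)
Your proposal is correct and follows essentially the same route as the paper. Both arguments use the chart equation $q=z_0z_1z_2$ to get reducedness and smooth coordinate-hyperplane branches, and then invoke uniqueness of analytic normalization to identify $\coprod_v E_v$ with the normalization. Your explicit checks of finiteness and local finiteness of the family $\{E_v\}$ simply spell out what the paper compresses into the statement that the local normalizations glue along the globally defined toric components.
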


\begin{proof}
In a maximal toric chart the central equation is $q=z_0z_1z_2$; at a point
where exactly $r\in\{1,2,3\}$ components meet, the germ of $D_Y$ is
$\{z_0\cdots z_{r-1}=0\}$. Thus $D_Y$ is reduced and its local branches are
smooth coordinate hyperplanes. By the branch description and uniqueness of
analytic normalization
\cite[Proposition~8.4.1 and Theorem~8.4.2]{GrauertRemmert}, its local
normalization is the disjoint union of these branches. On chart overlaps the branches are the
restrictions of the globally defined toric components $E_v$, so the local
normalizations glue to \eqref{eq:DY-normalization}.
\end{proof}

\begin{proposition}
\label{prop:lifted-central-quotient}
Fix $a\in\Delta$ and set
\[
 \widetilde D_Y=\coprod_{v\in\Z^2}E_v.
\]
The action of $\Z^2$ on $D_Y$ lifts to a free properly discontinuous action
by biholomorphisms on $\widetilde D_Y$.  Its analytic orbit space is
biholomorphic to $E_0$.
\end{proposition}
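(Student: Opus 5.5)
The action on $D_Y$ is the restriction of $\Psi_\lambda(\cdot,a)$ from \cref{prop:joint-cusp-action}, at $q=0$, to the $q$-invariant divisor $D_Y$. On the central fibre it is $y\mapsto c_{a,\lambda}(0)\cdot\Phi_\lambda(y)$.

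The first step is to lift this action to the normalization. The toric automorphism $\Phi_\lambda$ is induced by the shear $\phi_\lambda$, which sends the ray through $(v,1)$ to the ray through $(v+B_0\lambda,1)$. It therefore maps the component $E_v$ biholomorphically onto $E_{v+B_0\lambda}$; this is a toric isomorphism of closed orbit closures, induced on star fans by $\phi_\lambda$. Translation by the fixed torus element $c_{a,\lambda}(0)$ preserves every $E_v$ and acts on it biholomorphically. Hence
\[
 \widetilde\Psi_\lambda|_{E_v}:=\bigl(c_{a,\lambda}(0)\cdot\Phi_\lambda\bigr)|_{E_v}:E_v\to E_{v+B_0\lambda}
\]
defines a biholomorphism of $\widetilde D_Y$ with $\nu_Y\circ\widetilde\Psi_\lambda=\Psi_\lambda\circ\nu_Y$. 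The group law $\widetilde\Psi_\lambda\widetilde\Psi_\mu=\widetilde\Psi_{\lambda+\mu}$ follows from the one in \cref{prop:joint-cusp-action}, restricted componentwise. Alternatively, one can invoke functoriality of normalization in \cref{lem:toric-central-normalization}: each automorphism of $D_Y$ lifts uniquely to the normalization, so the group law transfers automatically.

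The second step is freeness and proper discontinuity, both of which come from the induced permutation of the index set. On components the action is $v\mapsto v+B_0\lambda$. Since $B_0\in\GL(2,\Z)$, the group $\Z^2$ acts simply transitively on $\Z^2$. A fixed point in $E_v$ forces $B_0\lambda=0$, hence $\lambda=0$, which gives freeness. For proper discontinuity, a compact subset of the disjoint union $\widetilde D_Y$ meets only finitely many components $E_v$. If $\widetilde\Psi_\lambda(K_1)\cap K_2\neq\varnothing$, then $v+B_0\lambda=v'$ for one of finitely many pairs $(v,v')$, and each pair determines at most one $\lambda$.

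The third step identifies the orbit space. Simple transitivity on components means that every orbit meets $E_0$ in exactly one point. So the inclusion $E_0\hookrightarrow\widetilde D_Y$ composed with the quotient map is a continuous bijection onto $\widetilde D_Y/\Z^2$. It is open, because the quotient map is open and $E_0$ is open in $\widetilde D_Y$. Concretely, the orbit space is $\coprod_v E_v/\Z^2$, and the map $E_v\to E_0$ given by $\widetilde\Psi_{\lambda_v}$ with $B_0\lambda_v=-v$ provides the inverse. Since the action is free and properly discontinuous, Lee's quotient theorem gives the orbit space a complex structure in which the quotient map is locally biholomorphic. Its restriction to the open subset $E_0$ is a bijective local biholomorphism, hence a biholomorphism onto $\widetilde D_Y/\Z^2$.

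The main point to check carefully is the first step: that $\Phi_\lambda$ restricted to $E_v$ is a genuine biholomorphism onto $E_{v+B_0\lambda}$, compatible with $\nu_Y$. This requires that the shear maps the star fan of $(v,1)$ isomorphically onto that of $(v+B_0\lambda,1)$, which holds because $\phi_\lambda$ preserves $\Sigma$ and is a lattice automorphism of $N'$. The remaining steps are bookkeeping.
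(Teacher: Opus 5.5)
Your proposal is correct and follows the paper's argument. You lift $\Psi_\lambda|_{D_Y}$ componentwise to maps $E_v\to E_{v+B_0\lambda}$ (the paper gets the group law from uniqueness of lifts to the normalization), and your Steps 2 and 3 unpack the paper's single observation that $(\lambda,x)\mapsto\widetilde\Psi_{\lambda,a}(x)$ is a biholomorphism $\Z^2\times E_0\to\widetilde D_Y$ turning the lifted action into translation on the discrete factor.
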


\begin{proof}
Each biholomorphism $\Psi_\lambda|_{D_Y}$ sends $E_v$ to
$E_{v+B_0\lambda}$ and lifts uniquely to the normalization. The uniqueness
of these lifts gives the group law. Since $B_0\in\GL(2,\Z)$, the map
\[
 \Z^2\times E_0\longrightarrow\widetilde D_Y,\qquad
 (\lambda,x)\longmapsto\widetilde\Psi_{\lambda,a}(x)
\]
is a biholomorphism, with $\Z^2$ discrete. It identifies the lifted action
with translation on the first factor. The action is therefore free and
properly discontinuous, and its analytic quotient is $E_0$.
\end{proof}

\begin{theorem}
\label{thm:relative-cusp}
The quotient
\[
 \mathcal N_{0,\Delta}
   =(Y_\varepsilon\times\Delta)/\Z^2
\]
is a connected Hausdorff second-countable complex manifold, and every
parameter slice of $\mathcal N_{0,\Delta}\to\Delta$ is connected.  The induced
map
\[
 F_0:\mathcal N_{0,\Delta}
     \longrightarrow D_\varepsilon\times\Delta
\]
is proper, and the projection to $\Delta$ is a holomorphic submersion.  The
inverse image
\[
 W_\Delta:=F_0^{-1}(\{0\}\times\Delta)
\]
is a reduced normal-crossing divisor in $\mathcal N_{0,\Delta}$.  For every
$a\in\Delta$, the fibre
\[
 W_a:=F_0^{-1}(0,a)
\]
is a compact, reduced, irreducible, nonnormal surface with normal-crossing
singularities, and its normalization is isomorphic to the compact toric
surface $E_0$.
\end{theorem}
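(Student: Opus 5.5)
The plan is to assemble the theorem from the preceding cusp lemmas. The first step is the manifold structure. By \cref{prop:joint-cusp-action} the group $\Z^2$ acts on $Y_\varepsilon\times\Delta$ by biholomorphisms. This space is open in the smooth Hausdorff second-countable threefold $Y$ of \cref{prop:periodic-conical-fan}, times $\Delta$. The action is free by \cref{prop:cusp-free} and properly discontinuous by \cref{prop:cusp-proper-discontinuous}. Lee's holomorphic quotient theorem \cite[Theorem~1.16]{LeeComplexManifolds} then makes $\mathcal N_{0,\Delta}$ a Hausdorff complex manifold. Second countability passes to the quotient because the orbit map is open and the source is second countable.

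Since $\Psi_\lambda$ preserves $q$ and $a$, the map $(q,a)$ descends to a holomorphic $F_0$. Its parameter component is a submersion, because upstairs $\operatorname{pr}_\Delta$ is a submersion and the quotient map is locally biholomorphic.

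\textbf{Properness.} Given compact $K\subset D_\varepsilon\times\Delta$, \cref{prop:compact-fundamental} supplies a compact $C_K\subset (q,a)^{-1}(K)$ meeting every orbit over $K$. Its image is then all of $F_0^{-1}(K)$, which is therefore compact. This is the step that needed the uniform estimates, and it is already done.

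\textbf{Connectedness.} I would use the same fundamental set argument with $K=\overline D_r\times\{a\}$ for small $r$. The part of the slice over $0<|q|<\varepsilon$ is the image of the torus part of $Y_\varepsilon\times\{a\}$. That part is $(\C^*)^2\times D_\varepsilon^*$, which is connected, and it is dense in $Y_\varepsilon$ because the torus is dense. Hence the slice is connected, and the total space is connected as a product-type union over the connected disc $\Delta$.

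\textbf{The central divisor.} Upstairs, $q^{-1}(0)\times\Delta$ is locally $\{z_0\cdots z_{r-1}=0\}$ by \eqref{eq:q-local}. It is therefore reduced with normal crossings, and these local properties descend through the local biholomorphisms of the quotient map; this gives the statement for $W_\Delta$. For fixed $a$, $W_a$ is the quotient of $D_Y$ by the $\Z^2$-action. The fibre is compact by properness and reduced with normal-crossing singularities by the same local model.

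By \cref{lem:toric-central-normalization} and \cref{prop:lifted-central-quotient}, the normalization $\coprod_v E_v$ carries a lifted free action whose quotient is $E_0$. The induced map $E_0\to W_a$ is finite and bimeromorphic, being an isomorphism off the double locus, and its source is smooth. By uniqueness of normalization \cite[Theorem~8.4.2]{GrauertRemmert} it is the normalization of $W_a$. This is where I expect the most care: one must check that the normalization of the quotient equals the quotient of the normalization. I would argue this locally, using the fact that the quotient map is a local biholomorphism near each point.

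Irreducibility then follows because the normalization $E_0$ is connected. Nonnormality holds because $W_a$ has points where two branches meet. Such points exist because the edges of the triangulation give nonempty pairwise intersections $E_v\cap E_{v'}$, and these are identified in the quotient to double curves of $W_a$.
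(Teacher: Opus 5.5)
Your proposal is correct and follows the paper's proof essentially step by step. Freeness and proper discontinuity feed into Lee's quotient theorem; properness comes from the compact fundamental set of \cref{prop:compact-fundamental}; connectedness comes from the dense connected torus part $(\C^*)^2\times D_\varepsilon^*$ of $Y_\varepsilon$; the central divisor is handled by the local equation $z_0z_1z_2=0$; the map $E_0\to W_a$ is identified with the normalization through the lifted action and the local biholomorphism of the quotient map; and nonnormality comes from the edge strata. (The opening appeal to a fundamental set with $K=\overline D_r\times\{a\}$ in your connectedness paragraph is unnecessary, since the density argument you then give is exactly the paper's.)
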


\begin{proof}
Freeness and proper discontinuity are
\cref{prop:cusp-free,prop:cusp-proper-discontinuous}; hence Lee's holomorphic
quotient theorem \cite[Theorem~1.16]{LeeComplexManifolds} makes the quotient a
complex manifold.  The dense torus of $Y$ meets
$Y_\varepsilon$ in
\[
 (\C^*)^2\times D_\varepsilon^*,
\]
which is connected.  This subset is dense in $Y_\varepsilon$, so
$Y_\varepsilon$ is connected as the closure of a connected subset.  The
product $Y_\varepsilon\times\Delta$ is therefore connected, and its continuous
surjective quotient $\mathcal N_{0,\Delta}$ is connected.  For fixed
$a\in\Delta$, the slice upstairs is the connected space $Y_\varepsilon$, and
its quotient by the restricted $\Z^2$-action is the corresponding parameter
slice of $\mathcal N_{0,\Delta}$; hence that slice is connected as well.
For a compact set $K\subset D_\varepsilon\times\Delta$,
\cref{prop:compact-fundamental} supplies a compact set $C_K$ meeting every
orbit over $K$. Its image in the quotient is exactly $F_0^{-1}(K)$, which is
therefore compact. Thus $F_0$ is proper. The parameter projection upstairs
is the ordinary projection from $Y_\varepsilon\times\Delta$, hence is a
submersion.  Since the quotient map is locally
biholomorphic, the descended map is also a submersion.

In every maximal toric chart, the equation $q=0$ is
$z_0z_1z_2=0$ by \eqref{eq:q-local}.  The quotient map is locally
biholomorphic, so the same equation describes
\[
 W_\Delta=F_0^{-1}(\{0\}\times\Delta).
\]
Hence $W_\Delta$ is a reduced normal-crossing divisor in the four-dimensional
manifold $\mathcal N_{0,\Delta}$.  Fix $a\in\Delta$.  Intersecting these local
charts with the parameter slice $a=\mathrm{constant}$ shows that
$W_a=F_0^{-1}(0,a)$ is a reduced normal-crossing complex surface.  It is
compact because $F_0$ is proper.  Set $D_Y=q^{-1}(0)$.

By \cref{lem:toric-central-normalization}, the normalization of $D_Y$ is
$\nu_Y:\widetilde D_Y=\coprod_v E_v\to D_Y$.
The lifted action in \cref{prop:lifted-central-quotient} gives
\begin{equation}
\label{eq:normalized-central-quotient}
 \widetilde D_Y/\Z^2\xrightarrow{\ \cong\ }E_0.
\end{equation}
The equivariant map $\nu_Y$, followed by the covering $p:D_Y\to W_a$,
therefore induces a holomorphic map
\begin{equation}
\label{eq:E0-to-W}
 \nu_a:E_0\longrightarrow W_a.
\end{equation}
It is surjective because every component $E_v$ has a translate equal to $E_0$.

Choose a sufficiently small evenly covered open set $U\subset W_a$ and
one sheet $\widetilde U\subset D_Y$, so that
\[
 p^{-1}(U)=\coprod_{\lambda\in\Z^2}\Psi_\lambda(\widetilde U).
\]
Every orbit in the normalization over $U$ has a unique representative over
$\widetilde U$. Hence the lifted action identifies
\[
 \nu_a^{-1}(U)\simeq\nu_Y^{-1}(\widetilde U),
\]
and under this identification $\nu_a$ is the local normalization map
$\nu_Y^{-1}(\widetilde U)\to\widetilde U\simeq U$ from
\cref{lem:toric-central-normalization}. These identifications show that
$\nu_a$ is the normalization of $W_a$. Since $E_0$ is irreducible and
$\nu_a$ is surjective, $W_a$ is irreducible.

Finally, the triangulation has edges.  At a point of the toric orbit
corresponding to an edge, the local equation of the central fibre is
$z_0z_1=0$.  The quotient map is locally biholomorphic, so $W_a$ contains a
nonempty double-curve stratum with the same local equation.  The local
normalization has two points over a general point of this stratum, and
therefore $\nu_a$ is not an isomorphism.  Thus $W_a$ is nonnormal.
\end{proof}

On the cusp horodisc, use the adapted fibre coordinates in which
$\Lambda_{\tor}$ is represented by the last two columns of the period matrix.
Set
\begin{equation}
\label{eq:Eexp}
 E_{\exp}(z,\zeta,a)
 =\bigl(\e^{2\pi\ii\zeta_1},
        \e^{2\pi\ii\zeta_2},
        q(z),a\bigr).
\end{equation}
Translations by $\Lambda_{\tor}$ disappear under the first two exponentials.
For $\lambda\in\Lambda$, let $\bar\lambda$ denote its image in
$\overline\Lambda\cong\Z^2$.  The decomposition
\eqref{eq:cusp-decomposition} gives
\begin{equation}
\label{eq:Eexp-equivariance}
 E_{\exp}(z,\zeta+\Omega_a(z)\lambda,a)
 =\Psi_{\bar\lambda}(E_{\exp}(z,\zeta,a)).
\end{equation}
Under the cusp deck transformation the adapted vertical coordinate is
unchanged because $R_{g_0}=I_2$, while $q(g_0z)=q(z)$.  Hence
$E_{\exp}(g_0z,\zeta,a)=E_{\exp}(z,\zeta,a)$.  Equation
\eqref{eq:cusp-deck-period} is the corresponding compatibility of the period
lattice with this deck action.

\begin{proposition}
\label{prop:cusp-gluing}
For the disc $D_0=D_\varepsilon$ and the smaller horodisc
$\mathcal H_{0,\varepsilon}$ defined in
\eqref{eq:smaller-horodisc}, Equation \eqref{eq:Eexp} descends to a jointly
holomorphic biholomorphism
\begin{equation}
\label{eq:G0}
 G_0:\mathcal J_\Delta|_{D_0^*\times\Delta}
 \xrightarrow{\ \cong\ }
 \mathcal N_{0,\Delta}|_{D_0^*\times\Delta}.
\end{equation}
\end{proposition}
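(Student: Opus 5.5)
The plan is to build $G_0$ in three stages. First I factor $E_{\exp}$ through the partial quotient by $\Lambda_{\tor}$. Then I identify the remaining $\overline\Lambda$-quotient with the $\Z^2$-quotient on the toric side. Finally I descend by $\langle g_0\rangle$ and use the precise invariance of $\mathcal H_{0,\varepsilon}$.

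\textbf{Stage 1: the map upstairs.} Work on $\mathcal H_{0,\varepsilon}\times\C^2\times\Delta$. The map $E_{\exp}$ of \eqref{eq:Eexp} is holomorphic and lands in $(\C^*)^2\times D_0^*\times\Delta\subset Y_\varepsilon\times\Delta$. It is invariant under $\Lambda_{\tor}$, because $\Omega_a\widehat w$ and $\Omega_a\widehat\delta$ are the standard basis vectors. Its fibres are exactly the $\Lambda_{\tor}$-orbits, so it induces a biholomorphism
\[
 (\mathcal H_{0,\varepsilon}\times\C^2\times\Delta)/\Lambda_{\tor}\;\cong\;(\C^*)^2\times\mathcal H_{0,\varepsilon}\times\Delta,
\]
with the $q$-coordinate replaced by $z$. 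By \eqref{eq:Eexp-equivariance} the map is equivariant for $\Lambda\to\overline\Lambda\cong\Z^2$ acting by $\Psi$. It is therefore a well-defined holomorphic map
\[
 \widetilde G:\widetilde{\mathcal J}_\Delta|_{\mathcal H_{0,\varepsilon}\times\Delta}\to \mathcal N_{0,\Delta}|_{D_0^*\times\Delta}.
\]
The equivariance identity itself I would check directly. For the first column block, $\exp(2\pi\ii(sB_0+C_a)\bar\lambda)=q^{B_0\bar\lambda}c_{a,\bar\lambda}(q)$, which matches \eqref{eq:Psi-dense}. The torus columns contribute nothing after exponentiation.

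\textbf{Stage 2: descent by $\langle g_0\rangle$.} Since $R_{g_0}=I_2$ and $q(g_0z)=q(z)$, we get $E_{\exp}(g_0z,\zeta,a)=E_{\exp}(z,\zeta,a)$. The $g_0$-action on the lattice quotient is $[z,\zeta,a]\mapsto[g_0z,\zeta,a]$, so $\widetilde G$ is $g_0$-invariant. By \eqref{eq:cusp-horodisc-separation} for the shrunken horodisc, together with Lee's local quotient charts (exactly as in the proof of \cref{lem:finite-conjugation}), the quotient of $\widetilde{\mathcal J}_\Delta|_{\mathcal H_{0,\varepsilon}\times\Delta}$ by $\langle g_0\rangle$ is $\mathcal J_\Delta|_{D_0^*\times\Delta}$. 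Hence $\widetilde G$ descends to a holomorphic $G_0$ commuting with the projections to $D_0^*\times\Delta$.

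\textbf{Stage 3: bijectivity; this is the main obstacle.} The key point is that $g_0$ acts on $(\C^*)^2\times\mathcal H_{0,\varepsilon}\times\Delta$ by the deck map, while the $\overline\Lambda$-action is $\Psi$. Both must be matched against $\Z^2$ acting on $(\C^*)^2\times D_0^*\times\Delta$.

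For injectivity, suppose two points have the same image. Then their $q$- and $a$-values agree, so $z'=g_0^kz$ for some $k$, and after replacing $z'$ we may assume $z'=z$. With $z$ fixed, equal images mean $\exp(2\pi\ii\zeta')=\Psi_{\bar\lambda}\exp(2\pi\ii\zeta)$ for some $\bar\lambda$. Lifting, $\zeta'-\zeta-\Omega_a(z)\lambda\in\Z^2=\Omega_a\Lambda_{\tor}$ for a lift $\lambda$, so the two points coincide in $\widetilde{\mathcal J}_\Delta$.

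The one subtlety is the compatibility of the $g_0$ shift $s\mapsto s-1$ with the lattice. By \eqref{eq:cusp-deck-period}, $Z_a(g_0z)=Z_a(z)-B_0$. This shows the period lattice at $g_0z$ equals that at $z$, because $-B_0\bar\lambda$ is integral and is absorbed in $\Lambda_{\tor}$. So the identification is consistent with $A(g_0)$.

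Surjectivity holds because every point of $(\C^*)^2\times D_0^*\times\Delta$ has the form $E_{\exp}$ of some point: choose $z$ over $q$ and logarithms of the $x$-coordinates.

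A bijective holomorphic map between complex manifolds of the same dimension is a biholomorphism. Alternatively, one can see this directly, since locally $G_0$ is given by exponentials with holomorphic local inverses given by logarithms. This proves \eqref{eq:G0}, jointly in $a$.
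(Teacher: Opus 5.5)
Your proposal is correct and follows essentially the paper's argument: precise invariance of $\mathcal H_{0,\varepsilon}$ under $\langle g_0\rangle$ identifies the restricted core with the $\langle g_0\rangle$-quotient, $E_{\exp}$ absorbs $\Lambda_{\tor}$ and intertwines the residual $\overline\Lambda$-action with $\Psi$ by \eqref{eq:Eexp-equivariance}, and holomorphy of the inverse comes from local logarithms (your alternative via the equidimensional bijectivity criterion also works). Like the paper, your surjectivity step tacitly uses that $q^{-1}(D_0^*)$ is exactly the dense-torus part $(\C^*)^2\times D_0^*$; this holds because $\chi^h$ vanishes on every non-open torus orbit.
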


\begin{proof}
We first identify the relevant part of the global core.  Let
\[
 \widetilde{\mathcal J}_{0,\Delta}^*
 =
 \widetilde{\mathcal J}_\Delta
 \big|_{\mathcal H_{0,\varepsilon}\times\Delta}.
\]
If
$g\widetilde{\mathcal J}_{0,\Delta}^*
 \cap\widetilde{\mathcal J}_{0,\Delta}^*\ne\varnothing$,
then the projections to $U^\circ$ give
\[
 g\mathcal H_{0,\varepsilon}
 \cap\mathcal H_{0,\varepsilon}\ne\varnothing.
\]
The inherited separation property proved immediately after
\eqref{eq:smaller-cusp-quotient} therefore implies
$g\in\langle g_0\rangle$. Thus
$\widetilde{\mathcal J}_{0,\Delta}^*$ is precisely invariant under
$\langle g_0\rangle$. Since the orbit maps are open, Lee's local quotient
charts \cite[Theorem~1.16]{LeeComplexManifolds}, together with
\eqref{eq:smaller-cusp-quotient}, give
\begin{equation}
\label{eq:cusp-core-restriction}
 \widetilde{\mathcal J}_{0,\Delta}^*/\langle g_0\rangle
 \xrightarrow{\ \cong\ }
 \mathcal J_\Delta|_{D_0^*\times\Delta}.
\end{equation}
Thus it remains only to identify the quotient on the left with the punctured
toric quotient.

First quotient $\C^2$ by the invariant lattice
$\Lambda_{\tor}=\Z\langle\widehat w,\widehat\delta\rangle$.  In the adapted
coordinates its two generators are the last two columns of $\Omega_a$, so the
map
\[
 (\zeta_1,\zeta_2)\longmapsto
 (\e^{2\pi\ii\zeta_1},\e^{2\pi\ii\zeta_2})
\]
identifies that quotient with $(\C^*)^2$.  The remaining quotient lattice is
$\overline\Lambda\cong\Z^2$, and
\eqref{eq:Eexp-equivariance} says exactly that its action is
\eqref{eq:Psi-dense}.  Thus $E_{\exp}$ induces a bijection after the two
lattice quotients are taken.

The cusp deck transformation leaves $q$ and the exponential fibre coordinates
unchanged, so taking its quotient replaces the additive coordinate $s$ by the
punctured-disc coordinate $q=\e^{2\pi\ii s}$.  To construct the inverse near
a point of the quotient, choose local logarithms of the two nonzero torus
coordinates and a local logarithm of $q$.  Dividing the first two logarithms
by $2\pi\ii$ gives $\zeta$, and dividing the third by $2\pi\ii$ gives a lift
$s$ of the base point.  Changing a fibre logarithm adds a generator of
$\Lambda_{\tor}$; changing the logarithm of $q$ changes $s$ by an integer and
therefore changes the chosen point of
$\mathcal H_{0,\varepsilon}$ by a power of the deck transformation $g_0$.  A different representative under the residual
$\Z^2$ action changes $\zeta$ by the remaining periods, by
\eqref{eq:Eexp-equivariance}.  Hence the local inverse is independent of all
choices after passing to the quotients.  The local logarithms are holomorphic,
so this inverse is holomorphic.  Both directions are jointly holomorphic in
$a$.  Combining this identification with
\eqref{eq:cusp-core-restriction} proves \eqref{eq:G0}.
\end{proof}

\subsection{Global compactification and smooth type}
\label{sec:global-gluing}

For a fixed parameter, this gluing construction and its independence of the
auxiliary choices are due to Alp\"oge~\cite[Construction~6.1,
Theorem~6.2, and Proposition~6.3]{AlpogeS6}.  We carry out the same gluing over
$\Delta$ and record the compatibility as the parameter disc and the common
cusp cutoff vary.

Choose the three discs $D_0,D_1,D_2\subset B$ with pairwise disjoint closures.
The four base opens
\[
 B^\circ,\quad D_0,\quad D_1,\quad D_2
\]
cover $B$, and the only nonempty overlaps involving a filling disc are
$D_j^*=D_j\cap B^\circ$.

Glue the disjoint union
\[
 \mathcal J_\Delta\sqcup
 \mathcal N_{0,\Delta}\sqcup
 \mathcal N_{1,\Delta}\sqcup
 \mathcal N_{2,\Delta}
\]
by $G_0$ from \eqref{eq:G0} and by $G_j$ from \eqref{eq:Gj}.  Since the
filling discs are disjoint, there are no nontrivial triple overlaps among
three different pieces; hence the cocycle condition is automatic once the
three displayed maps are fixed.

\begin{lemma}
\label{lem:hausdorff-gluing}
The glued topological space $\mathfrak X_\Delta$ is Hausdorff and
second-countable.  The quotient charts determine a unique complex manifold
structure on it.
\end{lemma}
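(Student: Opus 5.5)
The plan is to reduce Hausdorffness to a separation statement on the base $B\times\Delta$, using the fact that every piece maps properly to its base open set. Each of the four pieces $\mathcal J_\Delta$, $\mathcal N_{0,\Delta}$, $\mathcal N_{1,\Delta}$, $\mathcal N_{2,\Delta}$ is a Hausdorff second-countable complex manifold, by \cref{prop:relative-core}, \cref{prop:relative-finite-filling} and \cref{thm:relative-cusp}. The gluing maps $G_0,G_1,G_2$ are biholomorphisms between open subsets (\cref{lem:finite-conjugation} and \cref{prop:cusp-gluing}). They commute with the projections to $B\times\Delta$, since each is defined fibrewise over $D_j^*\times\Delta$ and preserves $a$. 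Hence the projections glue to a continuous map
\[
 \Pi:\mathfrak X_\Delta\longrightarrow B\times\Delta .
\]
Each piece embeds as an open subset of the glued space, because the gluing is along open sets.

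Second countability is then immediate: $\mathfrak X_\Delta$ is a union of four open second-countable subspaces. For the complex structure, the chart transitions are the holomorphic maps $G_j$ together with the holomorphic transitions inside each piece. This gives a holomorphic atlas. Uniqueness holds because every chart is required to be compatible with the quotient charts of the pieces, which already cover $\mathfrak X_\Delta$.

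For Hausdorffness, take two distinct points $x,y\in\mathfrak X_\Delta$.

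\emph{Case $\Pi(x)\ne\Pi(y)$.} Separate $\Pi(x)$ and $\Pi(y)$ by disjoint open sets in the Hausdorff space $B\times\Delta$, and pull them back by $\Pi$.

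\emph{Case $\Pi(x)=\Pi(y)=(b,a)$.} The key point is that the base opens have been arranged so that every point of $B$ has a neighbourhood contained in a single base open.
\begin{itemize}
\item If $b\in B^\circ\setminus(\overline D_0\cup\overline D_1\cup\overline D_2)$, both points lie in $\mathcal J_\Delta$.
\item If $b$ lies in some $D_j$, then $b$ avoids the other closed discs, since the closures are pairwise disjoint. Shrinking, both points then lie in $\mathcal N_{j,\Delta}$, or in its overlap with $\mathcal J_\Delta$ when $b\in D_j^*$.
\item If $b\in\partial D_j$, then $b\in B^\circ$ and $b\notin D_j$, so both points lie in $\mathcal J_\Delta$.
\end{itemize}
In every sub-case the two points lie in one Hausdorff open piece, and are separated there.

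The step I expect to require care is that the pieces really inject as open subsets, and that the boundary-of-disc case does not produce a non-Hausdorff identification. This is the usual failure mode of gluing along open sets, where a point of $\mathcal N_{j,\Delta}$ over $\partial D_j$ could be a limit of points identified with points of $\mathcal J_\Delta$ not over $D_j$. It is excluded here because $\mathcal N_{j,\Delta}$ lies entirely over the open disc $D_j$. So whenever a sequence in $\mathcal N_{j,\Delta}$ and its image under $G_j$ converge, either the limit stays over $D_j^*$, where $G_j$ is a homeomorphism, or it escapes to $\partial D_j$. In the latter case properness of $F_j$, equivalently of $F_0$, shows that the sequence leaves every compact subset of $\mathcal N_{j,\Delta}$, so it has no limit there. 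I will phrase this as the graph of each $G_j$ being closed in the product of the two pieces, which is the standard criterion for Hausdorffness of a two-piece gluing. I will verify it using properness over $D_j\times\Delta$ and the fibre-preserving nature of $G_j$. With only pairwise overlaps, this suffices for the whole glued space.
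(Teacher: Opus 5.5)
Your proposal is correct and follows essentially the paper's route: use the glued base map $\Pi$ to separate points with different images, and otherwise note that the whole fibre over $(b,a)$ lies in one open piece (the filling $\mathcal N_{j,\Delta}$ if $b\in D_j$, the core otherwise), where the pieces embed openly because each gluing class has at most one representative per piece. Your closing closed-graph paragraph is redundant given that case analysis, and it needs only continuity of $F_j$ and $p^\circ$ rather than properness; its dichotomy also omits projections tending to $p_j$, a case excluded because the core lies over $B^\circ$.
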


The filling discs are disjoint, so every gluing class is either a singleton
or a two-point class joining the core to one filling.  Each piece therefore
embeds openly, and disjoint neighbourhoods inside one piece have disjoint
open saturations. The complete verification is given in Appendix~\ref{app:deferred-verifications}.

The local maps glue to a holomorphic map
\begin{equation}
\label{eq:global-relative-map}
 \mathfrak f:\mathfrak X_\Delta\longrightarrow B\times\Delta.
\end{equation}

\begin{theorem}
\label{thm:relative-compactification}
The map \eqref{eq:global-relative-map} is proper.  The parameter projection
\[
 \pi=\operatorname{pr}_\Delta\circ\mathfrak f:
 \mathfrak X_\Delta\longrightarrow\Delta
\]
is a proper holomorphic submersion.  Its fibre over $a$ is the compactification obtained from $\Omega_a$ by the
two logarithmic fillings and the cusp filling with the chosen valid cusp
cutoff.
\end{theorem}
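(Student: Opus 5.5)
Properness of $\mathfrak f$ is local on the target $B\times\Delta$, so we check it over the four base opens separately. Over $B^\circ\times\Delta$ the restriction of $\mathfrak f$ is $p^\circ$, proper by \cref{prop:relative-core}. Over $D_j\times\Delta$ for $j=1,2$ it is $F_j$, proper by \cref{prop:relative-finite-filling}. Over $D_0\times\Delta=D_\varepsilon\times\Delta$ it is $F_0$, proper by \cref{thm:relative-cusp}. Each piece embeds openly in $\mathfrak X_\Delta$ (\cref{lem:hausdorff-gluing}), and $\mathfrak f^{-1}(V\times\Delta)$ is exactly the corresponding piece when $V$ is one of these opens. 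The only thing to verify is compatibility on overlaps: on $D_j^*\times\Delta$ the maps $F_j$ and $p^\circ$ agree after $G_j$, and on $D_0^*\times\Delta$ the maps $F_0$ and $p^\circ$ agree after $G_0$. For the finite points this holds because $G_j$ covers the identity of $(z,a)$ and $t_j=s_j^{m_j}$ is the base coordinate on $D_j$. For the cusp it holds because $E_{\exp}$ records $q(z)$ and $a$. So $\mathfrak f$ is well defined. A compact $K\subset B\times\Delta$ is covered by finitely many compacts, each contained in one of the four opens, and their preimages are compact, so $\mathfrak f$ is proper.

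Properness of $\pi$ then follows from compactness of $B$. If $L\subset\Delta$ is compact, then $\pi^{-1}(L)=\mathfrak f^{-1}(B\times L)$ and $B\times L$ is compact. For submersivity we work locally on each piece. On $\mathcal J_\Delta$ the parameter projection is induced from the coordinate projection of $U^\circ\times\C^2\times\Delta$ through local biholomorphic quotient charts, so it is a submersion. On $\mathcal N_{j,\Delta}$ and on $\mathcal N_{0,\Delta}$ it is a submersion by \cref{prop:relative-finite-filling} and \cref{thm:relative-cusp}. Submersivity is local, so $\pi$ is a submersion.

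For the fibre description, fix $a\in\Delta$. Every construction above is carried out fibrewise in $a$. The lattice action \eqref{eq:lattice-action}, the deck action \eqref{eq:Gamma-action}, the affine actions \eqref{eq:finite-affine-action}, and the cusp action \eqref{eq:Psi-action} all preserve $a$ and restrict on the slice to the actions built from $\Omega_a$ and $C_a$. Because they preserve $a$, the slice of each quotient is the quotient of the slice. For free properly discontinuous actions this is a biholomorphism onto a closed submanifold, and it follows from Lee's quotient charts, since the slice upstairs is saturated. The gluing maps $G_0,G_j$ likewise restrict to the fixed-parameter gluing maps. Hence $\pi^{-1}(a)$ is the gluing of the $a$-slices, which is the fixed-$a$ compactification with the two logarithmic fillings and the cusp filling at the cutoff $\varepsilon$. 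Connectedness of each slice is already recorded in the cited propositions.

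The main subtle point is the slice identification at the cusp. There the group $\Z^2$ is infinite and the space is non-quasi-compact, so we must check that the slice of $\mathcal N_{0,\Delta}$ is really the quotient $Y_\varepsilon/\Z^2$ of the fixed-$a$ action with the correct analytic structure, and not merely a bijective image. This is where we use that the relative quotient map is a local biholomorphism whose local sheets are products of sheets of $Y_\varepsilon$ with open subsets of $\Delta$. Such sheets exist because the action is free and properly discontinuous (\cref{prop:cusp-free,prop:cusp-proper-discontinuous}).
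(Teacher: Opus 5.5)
Your proposal is correct and follows the paper's own proof. Properness of $\mathfrak f$ is checked over the four base opens $B^\circ,D_0,D_1,D_2$ via $p^\circ,F_0,F_1,F_2$, properness of $\pi$ follows from compactness of $B$, submersivity is checked piece by piece, and the fibre identification rests on every action and gluing map preserving $a$. Your explicit checks (that the local maps agree under $G_0,G_j$, and that the slice of each free, properly discontinuous quotient is the quotient of the slice) supply details the paper leaves implicit without changing the argument.
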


\begin{proof}
On the four members of the base open cover, the restrictions of
$\mathfrak f$ are $p^\circ,F_0,F_1,F_2$, all of which are proper by
\cref{prop:relative-core,prop:relative-finite-filling,thm:relative-cusp}.
The base $B\times\Delta$ is metrizable, and the local properness just proved
shows that $\mathfrak f$ is proper.

Let $K\subset\Delta$ be compact.  Since $B$ is compact, $B\times K$ is
compact.  Hence
\[
 \pi^{-1}(K)=\mathfrak f^{-1}(B\times K)
\]
is compact, so $\pi$ is proper.

On the core, the parameter projection is a submersion by
\cref{prop:relative-core}.  On the finite fillings, it is a submersion by
\cref{prop:relative-finite-filling}; this remains true at the multiple central
fibres because the free quotient map is locally biholomorphic and the
parameter $a$ is unchanged by the group action.  On the cusp filling, it is a
submersion by \cref{thm:relative-cusp}; this remains true at the
normal-crossing fibre because upstairs the total space is the smooth product
$Y_\varepsilon\times\Delta$ and the action is free.  These local parameter
maps agree under the gluing biholomorphisms, so $\pi$ is a holomorphic
submersion everywhere.

All group actions preserve $a$, and all gluing maps are defined by formulas
whose restriction at a fixed parameter is the corresponding fixed-parameter
formula.  Thus the fibre over $a$ is the compactification made with the
present parameter disc and the present cutoff.  Its independence of these
choices is established in \cref{prop:choice-independence}.
\end{proof}

\begin{proposition}
\label{prop:choice-independence}
The relative compactification has the following compatibility properties.
\begin{enumerate}[label=\textup{(\roman*)}]
\item Fix a relatively compact parameter disc $\Delta\Subset\mathcal A$.
      If $0<\varepsilon'<\varepsilon$ are two valid cusp cutoffs
      for $\Delta$ in the sense of Definition~\ref{def:valid-cusp-cutoff}, then the two relative
      compactifications are canonically biholomorphic over $B\times\Delta$.
      The biholomorphism is the identity on the core and on the two finite
      fillings.
\item Let $\Delta_1,\Delta_2\Subset\mathcal A$ be parameter discs, with any valid cusp cutoffs in the sense of
      Definition~\ref{def:valid-cusp-cutoff} chosen in the two constructions.  Over
      $\Delta_1\cap\Delta_2$, their relative compactifications are
      canonically biholomorphic.  These identifications satisfy the cocycle
      identity on triple intersections.
\end{enumerate}
Consequently, for every $a\in\mathcal A$, the fibre together with its
distinguished map to $B$ is independent, up to the canonical biholomorphisms
over $B$ constructed above, of the parameter disc and of the cusp cutoff.
\end{proposition}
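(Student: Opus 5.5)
For (i), I will build the identification piece by piece. The core $\mathcal J_\Delta$ and the finite fillings $\mathcal N_{1,\Delta},\mathcal N_{2,\Delta}$ do not involve the cusp cutoff, so on them the map is the identity. On the cusp side, $\varepsilon'<\varepsilon$ gives $Y_{\varepsilon'}=q^{-1}(D_{\varepsilon'})\subset Y_\varepsilon$. This inclusion is $\Z^2$-equivariant, because the maps $\Psi_\lambda$ are given by one formula \eqref{eq:Psi-action} that preserves $q$. By \cref{lem:valid-cutoff-exists} the same auxiliary cutoff works for both radii, so both quotients are manifolds by \cref{thm:relative-cusp}. The inclusion therefore descends to an open embedding
\[
 \iota:\mathcal N^{\varepsilon'}_{0,\Delta}\hookrightarrow\mathcal N^{\varepsilon}_{0,\Delta}
\]
whose image is exactly $F_0^{-1}(D_{\varepsilon'}\times\Delta)$.

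Next I check compatibility with the gluing maps. The map $G_0$ is induced by the formula $E_{\exp}$ of \eqref{eq:Eexp}, which does not depend on $\varepsilon$. Hence on $\mathcal J_\Delta|_{D_{\varepsilon'}^*\times\Delta}$ we have $\iota\circ G_0^{\varepsilon'}=G_0^{\varepsilon}$. I then define $\mathfrak X^{\varepsilon'}_\Delta\to\mathfrak X^{\varepsilon}_\Delta$ to be the identity on the core and the finite fillings and $\iota$ on the cusp filling. The agreement just proved shows this map respects the gluing relations, so it is well defined and is a holomorphic open embedding over $B\times\Delta$. It is surjective: a point of $\mathcal N^{\varepsilon}_{0,\Delta}$ over $\varepsilon'\le|q|<\varepsilon$ lies over $D_\varepsilon^*$, so it is glued via $G_0^\varepsilon$ to a point of the core, which is in the image. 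A bijective holomorphic map is a biholomorphism. Canonicity holds because the map is the identity on the dense core, and a holomorphic map between Hausdorff manifolds is determined by its values on a dense set.

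For (ii), given discs $\Delta_1,\Delta_2$ with cutoffs $\varepsilon_1,\varepsilon_2$, I set $\varepsilon''=\min(\varepsilon_1,\varepsilon_2)$. I first check that $\varepsilon''$ is valid for $\Delta_1\cap\Delta_2$ (or for a disc inside it; the intersection of two discs need not be a disc, so I will argue locally on subdiscs). This should follow because $M_C$ can only shrink when $\overline\Delta$ shrinks. Restricting each construction to $\Delta_1\cap\Delta_2$ gives the same formulas for the core, the affine actions, $\Psi_\lambda$ and all gluing maps, since every group action preserves $a$. Applying (i) in each construction, I pass from $\varepsilon_i$ down to $\varepsilon''$, and this gives the identification. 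By the density argument above, each identification is the unique holomorphic map extending the identity of the common core $\mathcal J$. Composites of such maps are again the identity on the core. The cocycle identity therefore follows from uniqueness on a dense open set, and the final independence statement for each fibre follows by restricting to $a$.

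The main obstacle is the validity and well-definedness issue in (ii). The intersection $\Delta_1\cap\Delta_2$ is not a disc, and Definition~\ref{def:valid-cusp-cutoff} is phrased for discs. Everything that is used, however, namely the bound on $M_C$ and the conclusion of \cref{lem:uniform-invertibility}, only requires a relatively compact open parameter set. So I would first remark that the constructions and \cref{thm:relative-cusp} hold verbatim for any relatively compact open parameter set, and then run the argument above.
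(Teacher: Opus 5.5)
Your proposal is correct and follows essentially the same route as the paper. The invariant open inclusion $Y_{\varepsilon'}\times\Delta\subset Y_\varepsilon\times\Delta$ descends to an open embedding of cusp quotients that is compatible with the two punctured exponential gluings, and uniqueness of maps restricting to the identity on the dense core gives canonicity and the cocycle law; in (ii) you compare over subdiscs, or over the convex relatively compact set $\Delta_1\cap\Delta_2$, after passing to a common smaller cutoff. The only cosmetic difference is the direction of the map in (i): you build $\mathfrak X^{(\varepsilon')}_\Delta\to\mathfrak X^{(\varepsilon)}_\Delta$ and check bijectivity, whereas the paper constructs the map in the opposite direction from a two-set cover of the larger cusp filling and identifies your map as its inverse. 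For your direction, injectivity across the core--cusp gluing, which you leave implicit, follows from injectivity of $\iota$ together with $\iota\circ G_0^{(\varepsilon')}=G_0^{(\varepsilon)}$.
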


Nested valid cutoffs give invariant open embeddings of cusp models, and the
punctured identifications are restrictions of the same exponential formula.
Two parameter discs are compared locally using one smaller common cutoff;
the restriction identities make these comparisons independent of that
cutoff and give the cocycle law. The complete verification is given in Appendix~\ref{app:deferred-verifications}.

\begin{definition}
\label{def:intrinsic-Xa}
For $a\in\mathcal A$, let $(X_a,f_a)$ be the fibre pair obtained from any
relatively compact disc $\Delta\Subset\mathcal A$ containing $a$ and any
valid cusp cutoff, whose existence follows from
\cref{lem:valid-cutoff-exists}.  Proposition~\ref{prop:choice-independence}
identifies all such choices by canonical biholomorphisms over $B$ satisfying
the cocycle law.  We use $(X_a,f_a)$ for any fixed representative.
\end{definition}

Every $X_a$ is connected: the core and each of the three connected
fillings meet along a nonempty punctured annulus.  Moreover, $X_a$ is compact
and $f_a(X_a)$ contains the dense open set $B^\circ$; hence
$f_a:X_a\to B$ is surjective.

\begin{corollary}
\label{cor:global-smooth-type}
For any two admissible parameters $a,a'\in\mathcal A$, the fixed-parameter
compactifications $X_a$ and $X_{a'}$ are diffeomorphic.
\end{corollary}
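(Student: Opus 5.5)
The plan is to apply Ehresmann's fibration theorem to the relative family constructed above and then chain discs across the connected parameter set. By \cref{thm:relative-compactification}, for every relatively compact disc $\Delta\Subset\mathcal A$ the projection $\pi:\mathfrak X_\Delta\to\Delta$ is a proper holomorphic submersion. Its fibre over $a$ is, by \cref{prop:choice-independence} and Definition~\ref{def:intrinsic-Xa}, canonically biholomorphic to $X_a$. A proper surjective submersion of smooth manifolds onto a connected base is a locally trivial smooth fibre bundle, by Ehresmann's theorem. Since the disc $\Delta$ is contractible, the bundle is even globally trivial. Hence any two fibres $\pi^{-1}(a)$ and $\pi^{-1}(a')$ with $a,a'\in\Delta$ are diffeomorphic. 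Composing with the canonical biholomorphisms of \cref{prop:choice-independence}, $X_a$ and $X_{a'}$ are diffeomorphic whenever $a,a'$ lie in a common relatively compact disc.

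Next I would reduce the case of arbitrary $a,a'\in\mathcal A$ to this one. The admissible set $\mathcal A$ is an open half-plane by \cref{prop:admissible-half-plane}, so it is convex. The closed segment $[a,a']$ is therefore a compact subset of $\mathcal A$ and has positive distance from the boundary line $\Im a=-M_0$. Any open disc centred at a point of $\mathbf C$ whose closure avoids that line lies in $\mathcal A$ and is relatively compact there. So one disc around the midpoint, of radius slightly larger than $|a-a'|/2$ but smaller than the distance from the segment to the boundary, may fail to exist. Instead I will cover the segment by finitely many small relatively compact discs $\Delta_1,\dots,\Delta_n\Subset\mathcal A$ with consecutive discs overlapping. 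Choosing points $b_k\in\Delta_k\cap\Delta_{k+1}$ and applying the first paragraph in each disc gives a chain of diffeomorphisms
\[
X_a\cong X_{b_1}\cong\dots\cong X_{b_{n-1}}\cong X_{a'}.
\]
Since the segment lies in the half-plane, a single disc would in fact also do. Indeed, the half-plane contains a disc through any two of its points, namely a sufficiently large disc tangent internally, and its relatively compact shrinkings still contain $a,a'$. Either route works.

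The only genuine point to check is that Ehresmann applies. This means $\pi$ must be surjective, which holds because each parameter slice is nonempty: it contains the core torus fibres. Properness and submersivity across the special fibres are already contained in \cref{thm:relative-compactification}. In particular, at the multiple and nonnormal central fibres the parameter projection stays a submersion because the total spaces upstairs are smooth products and the group actions are free. So I expect no real obstacle. The subtle input, independence of the fibre from the disc and cutoff, is supplied by \cref{prop:choice-independence}.
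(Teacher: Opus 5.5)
Your proposal is correct and is essentially the paper's argument: you apply Ehresmann's theorem to the proper holomorphic submersion $\pi:\mathfrak X_\Delta\to\Delta$ of \cref{thm:relative-compactification}, and then use the comparison maps of \cref{prop:choice-independence} and \cref{def:intrinsic-Xa} to identify the two fibres with $X_a$ and $X_{a'}$. The chain of overlapping discs is an unnecessary detour: as you note, a half-plane contains a relatively compact disc through any two of its points, and the paper simply chooses one such disc containing both $a$ and $a'$.
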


\begin{proof}
Choose a disc $\Delta\Subset\mathcal A$ containing $a$ and $a'$.  The
projection of the resulting relative
compactification is a proper holomorphic submersion by
\cref{thm:relative-compactification}; hence its fibres over $a$ and $a'$
are diffeomorphic by Ehresmann's theorem
\cite[pp.~154--155]{Ehresmann}.
The comparison maps of \cref{prop:choice-independence,def:intrinsic-Xa}
identify these fibres with the fixed representatives $X_a$ and $X_{a'}$.
\end{proof}

\begin{lemma}
\label{lem:alpoge-member}
Let $(X^{\mathrm A},f^{\mathrm A})$ be the compact pair of
Alp\"oge~\cite[Construction~6.1]{AlpogeS6}, with its specified twists
$v_1=\widehat\gamma+2\widehat u-4\widehat w$ and
$v_2=-\widehat\gamma-3\widehat u+3\widehat w$, and untranslated cusp
gluing.
For its chosen admissible period function $\beta^{\mathrm A}$, there is a
unique constant $a_{\mathrm A}\in\mathcal A$ for which
$\beta^{\mathrm A}=\beta_0+a_{\mathrm A}$. For this parameter there is an
isomorphism of pairs
\[
 (X^{\mathrm A},f^{\mathrm A})\cong(X_{a_{\mathrm A}},f_{a_{\mathrm A}})
 \quad\text{over }B.
\]
\end{lemma}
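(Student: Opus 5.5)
The argument has two parts: identify the constant $a_{\mathrm A}$ from the transformation laws of the period function, then match Alp\"oge's gluing data piece by piece with the present construction at $a=a_{\mathrm A}$.

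\emph{Existence and uniqueness of $a_{\mathrm A}$.} By Alp\"oge~\cite[Theorem~3.4]{AlpogeS6}, as recalled in the proof of \cref{prop:period-data-existence}, $\beta^{\mathrm A}$ satisfies the same generator laws \eqref{eq:beta-generator-laws} as $\beta_0$, with the same $\tau$ and $\mu$. Hence the difference $\beta^{\mathrm A}-\beta_0$ is a holomorphic function on $U$ invariant under $g_1$ and $g_2$, and so under all of $\Gamma$. Since $\mu$ and $\beta+\tau$ extend holomorphically at the cusp for both solutions, this difference descends to a bounded holomorphic function near $p_0$. It therefore extends to a holomorphic function on the compact base $B$, and is a constant $a_{\mathrm A}$. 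This is the uniqueness "up to an additive constant" already quoted in the proof of \cref{prop:period-data-existence}. Uniqueness of the constant is immediate. Admissibility is read off from \eqref{eq:D-a}: Alp\"oge's choice is made so that the real-lattice condition holds, i.e.\ $\mathscr D_{a_{\mathrm A}}<0$ throughout. The real issue is that $\mathcal A$ is defined by $\Im a<-M_0$, whereas nondegeneracy only gives $\Im a_{\mathrm A}<-\mathscr D_0(z)$ pointwise. I would close this gap using the source's own admissibility criterion, which is phrased as a strict negative supremum. If the source only gives pointwise negativity, I would instead argue that $M_0$ is attained: $\mathscr D_0$ is continuous on the orbifold quotient and tends to $-\infty$ at the cusp, so the supremum is a maximum. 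Pointwise strict negativity then yields $\Im a_{\mathrm A}<-M_0$.

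\emph{Isomorphism of pairs.} Once $\Omega_{a_{\mathrm A}}$ coincides with Alp\"oge's period matrix, compare the pieces of \cref{sec:local-fillings}.
\begin{enumerate}
\item The core $\mathcal J$ restricted to $a=a_{\mathrm A}$ is literally Alp\"oge's lattice-and-$\Gamma$ quotient. The cocycle $R_g$ is the same, as shown in the proof of \cref{prop:period-data-existence}.
\item The finite fillings use the same twists $v_1,v_2$ and the same affine formula \eqref{eq:finite-affine-action}.
\item The cusp filling uses the same periodic fan (\cref{prop:periodic-conical-fan}) and the untranslated exponential gluing \eqref{eq:Eexp}, with $C_{a_{\mathrm A}}=C_0+a_{\mathrm A}E_{21}$ equal to Alp\"oge's constant block.
\end{enumerate}
Alp\"oge's construction may use a different cusp radius or different discs. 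Shrinking these is harmless: I would invoke his choice-independence result~\cite[Proposition~6.3]{AlpogeS6} together with \cref{prop:choice-independence}(i) to pass to a common valid cutoff. The identity maps on each piece then commute with the gluing maps $G_0,G_1,G_2$ and with the projections to $B$. They therefore glue, giving an isomorphism over $B$.

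The main obstacle is bookkeeping of conventions rather than mathematics. Alp\"oge's generator and inverse-vertex conventions, the ordering of the basis of $\Lambda$, and his sign conventions for the twist and the cusp shear $B_0$ must be checked to agree with \eqref{eq:A12}, \eqref{eq:twists} and \eqref{eq:shear}. Appendix~\ref{app:source-six-sphere} would be cited for this dictionary. Where a convention differs by inversion of a generator, one conjugates the fibre coordinates by the corresponding lattice automorphism before comparing.
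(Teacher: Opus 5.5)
Your proposal is correct and follows essentially the same route as the paper: the difference $\beta^{\mathrm A}-\beta_0$ is $\Gamma$-invariant and holomorphic across the cusp, hence constant on $\Pone$. The isomorphism is then the identity on the core, the two finite fillings and the cusp filling, after passing to common smaller discs and a common cusp cutoff. For admissibility, your fallback is the argument the paper actually uses, since Alp\"oge's condition is pointwise negativity of $\mathscr D_{a_{\mathrm A}}$: $\mathscr D_0$ attains its supremum $M_0$ because it is continuous on $B\setminus\{p_0\}$ and tends to $-\infty$ at the cusp.
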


\begin{proof}
Use the same oriented $(3,4,\infty)$ triangle and the same affine
normalization of its quotient: the order-three, order-four, and cusp
points are $0,1,\infty$, respectively. This identifies the marked
coverings and their generators $g_1,g_2,g_0$. The integral matrices
\eqref{eq:A12} are Alp\"oge's dual monodromy matrices in the ordered
basis $(\widehat\gamma,\widehat u,\widehat w,\widehat\delta)$.
His $\tau$ and $\mu$ have the generator laws
\eqref{eq:period-generator-laws} and
\eqref{eq:mu-generator-laws}, and the same
cusp normalization as ours.  The uniqueness assertions in
\cite[Theorem~3.4(i),(ii)]{AlpogeS6} identify them.  His
$\beta^{\mathrm A}$ and our $\beta_0$ satisfy
\eqref{eq:beta-generator-laws}, and both
$\beta+\tau$ extend across the cusp. Their difference descends to a
holomorphic function on $B=\Pone$, hence is a constant
$a_{\mathrm A}$:
\[
 \beta^{\mathrm A}=\beta_0+a_{\mathrm A},\qquad
 \Pi^{\mathrm A}=\Omega_{a_{\mathrm A}}.
\]
This also fixes the vertical matrices $R_g$, since they are determined
by the same period matrix and integral monodromy.

Alp\"oge's admissibility condition is
$\mathscr D_{a_{\mathrm A}}(z)<0$ for every $z\in U$. By
\cref{prop:period-data-existence}, this invariant function is continuous
on $B\setminus\{p_0\}$ and tends to $-\infty$ at $p_0$; it therefore
attains a strictly negative maximum. Since
$\mathscr D_{a_{\mathrm A}}=\mathscr D_0+\Im a_{\mathrm A}$, we obtain
$M_0+\Im a_{\mathrm A}<0$, or $a_{\mathrm A}\in\mathcal A$.

The four local pieces now agree at $a_{\mathrm A}$. On the smooth core
they are quotients of the same marked period family by the same deck
action. At $p_j$, the vectors in \eqref{eq:twists} equal the two source
vectors; hence the finite actions
\eqref{eq:finite-affine-action} and the logarithmic translations
\eqref{eq:sigma-ja} agree with Alp\"oge's finite fillings and gluing
maps. At the cusp, both constructions use the triangles
$\{v,v+e_1,v+e_2\}$ and
$\{v+e_1,v+e_2,v+e_1+e_2\}$ for $v\in\Z^2$, the same
$B_0=\bigl(\begin{smallmatrix}0&1\\-1&0\end{smallmatrix}\bigr)$,
and the same decomposition
$Z_{a_{\mathrm A}}=sB_0+C_{a_{\mathrm A}}(q)$. Thus the toric lattice
action \eqref{eq:Psi-action} and the exponential gluing
\eqref{eq:Eexp} agree. In particular, neither construction inserts a
fibre translation at the cusp. In the discrete notation of
\cite[Section~6.4]{AlpogeS6}, these facts say
$(\ell_0,\ell_1,\ell_2)=(0,1,-1)$, with the last two values given by
\eqref{eq:twist-gamma}.

Take the finite linearizing discs and the cusp cutoff to be common
smaller choices in the two constructions. On these choices, the
identity on each of the four pieces respects every gluing map, hence
descends to a biholomorphism of pairs over $B$. Restriction to smaller
discs removes only punctured collars already covered by the core.
Changing a finite logarithm branch changes its fibre translation by an
integral period; changing the cusp logarithm changes $C$ by an
integral multiple of $B_0$, whose exponential action is unchanged.
Thus the comparison is independent of these choices, consistently with
\cite[Proposition~6.3]{AlpogeS6}.
The intrinsic representative $X_{a_{\mathrm A}}$ is independent of
its parameter disc and cusp cutoff by
\cref{prop:choice-independence,def:intrinsic-Xa}, giving the stated
isomorphism.
\end{proof}

\begin{corollary}
\label{cor:sphere-type}
Every compact threefold $X_a$ in the admissible family is diffeomorphic
to $S^6$.
\end{corollary}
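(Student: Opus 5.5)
The plan is to combine Alp\"oge's smooth-type theorem for his own member with the deformation invariance already established here. By \cref{lem:alpoge-member}, Alp\"oge's compact pair $(X^{\mathrm A},f^{\mathrm A})$ from \cite[Construction~6.1]{AlpogeS6} is isomorphic over $B$ to $(X_{a_{\mathrm A}},f_{a_{\mathrm A}})$ for a uniquely determined admissible parameter $a_{\mathrm A}\in\mathcal A$. Alp\"oge's smooth-type theorem, recorded in Appendix~\ref{app:source-six-sphere}, states that $X^{\mathrm A}$ is diffeomorphic to $S^6$. Since a biholomorphism is in particular a diffeomorphism, $X_{a_{\mathrm A}}$ is diffeomorphic to $S^6$.

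For an arbitrary $a\in\mathcal A$, I would apply \cref{cor:global-smooth-type} to the pair $a,a_{\mathrm A}$. This gives a diffeomorphism $X_a\cong X_{a_{\mathrm A}}$. Concretely, one chooses a relatively compact disc $\Delta\Subset\mathcal A$ containing both parameters; this is possible because $\mathcal A$ is an open half-plane, hence convex. \Cref{thm:relative-compactification} then makes $\pi_\Delta$ a proper holomorphic submersion, so Ehresmann's theorem applies. Composing the two diffeomorphisms gives $X_a\cong S^6$.

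The only delicate point is bookkeeping rather than mathematics. The fibres of $\pi_\Delta$ must be the intrinsic representatives of \cref{def:intrinsic-Xa}, and Alp\"oge's member must correspond to the same intrinsic representative regardless of the auxiliary choices (filling discs, cusp cutoff, branches of logarithms). \cref{prop:choice-independence} and the last paragraph of the proof of \cref{lem:alpoge-member} settle both points. One should also confirm that Alp\"oge's diffeomorphism statement applies to every admissible choice of his period function, or at least to the one matched by $a_{\mathrm A}$. The same comparison shows that the orientation and marking conventions play no role, since only the underlying smooth manifold is asserted.
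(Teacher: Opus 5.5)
Your proposal is correct and follows the paper's argument exactly: \cref{lem:alpoge-member} identifies Alp\"oge's threefold with $X_{a_{\mathrm A}}$, Alp\"oge's Theorem~8.1 makes it diffeomorphic to $S^6$, and \cref{cor:global-smooth-type} carries this smooth type to every $a\in\mathcal A$. That corollary is Ehresmann's theorem for $\pi_\Delta$ over a disc $\Delta\Subset\mathcal A$ containing both parameters, with the fibres identified with the intrinsic representatives by \cref{prop:choice-independence}. One small precision: if the parameter discs are meant to be round, such a $\Delta$ exists because every compact subset of an open half-plane lies in a round disc whose closure stays in the half-plane, not because of convexity by itself.
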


\begin{proof}
By \cref{lem:alpoge-member}, $X_{a_{\mathrm A}}$ is Alp\"oge's
threefold, which is diffeomorphic to $S^6$ by
\cite[Theorem~8.1]{AlpogeS6}. The smooth type is constant throughout
$\mathcal A$ by \cref{cor:global-smooth-type}.
\end{proof}


\section{Cyclic coverings and Oka maps}
\label{nm:sec:relative-oka}

An open subset is called analytic-Zariski open if its complement in the
stated ambient manifold is a closed analytic subset.

For the family in \cref{thm:relative-compactification}, set
\[
 \mathcal S_j=(\mathfrak f_\Delta^{-1}(\{p_j\}\times\Delta))_{\red},
 \qquad \mathcal U_j=\mathfrak X_\Delta\setminus\mathcal S_j.
\]
Here $(\cdot)_{\red}$ denotes the underlying reduced complex space.
The two complements form an analytic-Zariski cover of
$\mathfrak X_\Delta$.  The normalized base maps are
\[
 \rho_4(b)=\frac{1}{1-b^4}:\C_b\longrightarrow\Pone\setminus\{0\},
 \qquad
 \rho_3(b)=\frac{b^3}{b^3-1}:\C_b\longrightarrow\Pone\setminus\{1\}.
\]
For $(m,j)=(4,1)$ or $(3,2)$, let $\mathcal M_m$ be the normalization of
the following fibre product in the category of reduced complex spaces:
\[
 \mathcal U_j\mathbin{\times}_{(\Pone\setminus\{p_j\})\times\Delta}
                 (\C_b\times\Delta).
\]
There are maps
\begin{equation}\label{nm:eq:finite-cover}
 r_m:\mathcal M_m\longrightarrow\mathcal U_j,
 \qquad h_m:\mathcal M_m\longrightarrow\C_b\times\Delta.
\end{equation}
The root base change removes the remaining multiple fibre.  The local
model from \cref{sec:relative-compactification} is the quotient of
$t=s^m$ by a free affine cyclic action of order $m$, so $r_m$ is a finite
unramified holomorphic covering of degree $m$ and $\mathcal M_m$ is
smooth.

\begin{lemma}\label{nm:lem:unwrap}
For $m=3,4$ there is a connected regular unramified holomorphic covering
\[
 q_m:\mathcal Q_m\longrightarrow\mathcal M_m
\]
with deck group $\Z$, whose restriction to a smooth torus fibre is the
covering corresponding to the subgroup $\Lambda_0=\ker\gamma$ of the period
lattice.  For every $a_*\in\Delta$
there is a biholomorphism
\begin{equation}\label{nm:eq:product}
 \mathcal Q_m\simeq Q_{m,a_*}\times\Delta
\end{equation}
over $\C_b\times\Delta$, where $Q_{m,a_*}=\mathcal Q_m|_{a_*}$.
\end{lemma}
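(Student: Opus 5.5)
The plan is to build $\mathcal Q_m$ by hand, gluing the same four kinds of pieces as in \cref{sec:global-gluing}, but pulled back along $\rho_m$ and with the period lattice $\Lambda$ replaced by $\Lambda_0=\ker\gamma$ wherever a fibre lattice is quotiented. Then $\mathcal M_m=\mathcal Q_m/\Z$, where $\Z$ acts by translation by $\Omega_a\widehat\gamma$ on the vertical coordinate. Three facts make this possible: $\gamma A(g)=\gamma$, $R_ge_2=e_2$, and $\Lambda_{\tor}\subset\Lambda_0$.

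\textbf{Local pieces.} First record what $\rho_m$ does to the base. For $(m,j)=(4,1)$ the point $b=0$ maps to $p_2=1$ with ramification $4$. The points $b^4=1$ map unramified to the cusp. The remaining critical values avoid $p_1$, which is deleted. The case $(3,2)$ is symmetric.
\begin{itemize}
\item Over the preimage of $D_{3-j}$, the normalized fibre product is the quotient $\mathcal P_{3-j,\Delta}$ itself; the root base change exactly undoes the free cyclic quotient. Its $\Lambda_0$-analogue $(\widetilde D\times\C^2\times\Delta)/\Lambda_0$ is a $\Z$-covering of it, since $\Lambda/\Lambda_0\cong\Z$ via $\gamma$.
\item Over each cusp preimage take $(Y_\varepsilon\times\Delta)/\Z\langle\bar{\widehat u}\rangle$. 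This is the subgroup of $\overline\Lambda$ on which $\gamma$ vanishes, and \cref{prop:cusp-free,prop:cusp-proper-discontinuous} still give a free, properly discontinuous action on it.
\item Over the core, pull back $U^\circ\to B^\circ$ along $\rho_m$, restricted to $\C_b$ minus the cusp and ramification preimages. Form $\Lambda_0$-quotients and then descend by the $\Gamma$-action \eqref{eq:Gamma-action}. This is well defined because $A(g)$ preserves $\Lambda_0$.
\end{itemize}

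\textbf{Gluing and the covering property.} The gluing maps $G_0$ and $G_j$ lift to these intermediate quotients. The cusp map $E_{\exp}$ is unchanged, since $\Lambda_{\tor}\subset\Lambda_0$. The finite-point conjugation $h_j$ involves the multivalued $\log s$. Its branch change adds $\Omega_a v_j$, which is not in $\Lambda_0$ because $\gamma(v_j)=\pm1$. Upstairs, however, this change is exactly the deck generator. So I lift $h_j$ on the universal cover of the punctured disc and check that the resulting gluing is $\Z$-equivariant.

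The result is a $\Z$-equivariant complex manifold $\mathcal Q_m$ with $\mathcal Q_m/\Z=\mathcal M_m$. The $\Z$-action is free and properly discontinuous because it is so on each piece, where it is a lattice quotient. Hausdorffness follows as in \cref{lem:hausdorff-gluing}. The fibre restriction is the $\Lambda_0$-covering by construction. For connectedness I use the same product argument as in \cref{prop:relative-core}. The fibre $\Lambda_\R/\Lambda_0$ is connected, and the base pieces are connected.

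\textbf{Product structure.} By \eqref{nm:eq:invariant-direction}, $\Omega_a\lambda=\Omega_{a_*}\lambda$ for every $\lambda\in\Lambda_0$. Also $R_g$ and $A(g)$ do not depend on $a$. Hence the identity map $(z,\zeta,a)\mapsto(z,\zeta,a)$ intertwines the $\Lambda_0$- and $\Gamma$-actions for $a$ and $a_*$, giving $\mathcal Q_m|_{\text{core}}\simeq Q_{m,a_*}|_{\text{core}}\times\Delta$. On the cusp piece, $C_a=C_{a_*}+(a-a_*)E_{21}$ changes only the $\widehat\gamma$-column. The $\bar{\widehat u}$-action is therefore independent of $a$, and again the identity works.

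\textbf{Main obstacle.} The hard step is the finite filling. The gluing translation $\sigma_{j,a}$ contains $(a-a_*)\gamma(v_j)\frac{\log s}{2\pi\ii}e_2$, so the identity does not commute with the gluing. I would correct by the fibre translation
\[
\zeta\mapsto\zeta+(a-a_*)\gamma(v_j)\frac{\log s}{2\pi\ii}e_2
\]
on the core near $b=0$. This is single-valued only on the $\Z$-cover, which is exactly where we now are. Equivalently, I would define the filling chart upstairs using the $a$-dependent branch so that it matches the $a_*$-chart. I must then check three things: the correction extends across the central smooth fibre after base change; it is compatible with the $\langle g_j\rangle$-twist by \eqref{lem:finite-conjugation}; and it is holomorphic in $a$. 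If the correction cannot be made global on the core, the fallback is to use the $\Theta$-action by $\exp(2\pi\ii u)$ with $u$ a holomorphic function on the base pulled back to $\C_b$. Since $\C_b$ is Stein, the additive Cousin problem matching the local corrections at $b=0$ and at the cusp points is solvable, and this produces the biholomorphism \eqref{nm:eq:product} over $\C_b\times\Delta$.
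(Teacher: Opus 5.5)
\textbf{The gap is in the gluing step.} Write $k=3-j$ for the finite point that survives the root base change. The chart over $b=0$ is then the $\Lambda_0$-quotient of $\mathcal P_{k,\Delta}$, and the gluing is $h_k$, translation by $\frac{\log s_k}{2\pi\ii}\Omega_a v_k$. This gluing does not lift to your pieces.

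Over a punctured disc $0<|b|<r$, both pieces are $(\widetilde D^*\times\C^2\times\Delta)/\Lambda_0$, and in both a loop around $b=0$ at constant fibre coordinate is closed. For the $\mathcal P$-piece this is clear. For your core $(\rho_m^*U^\circ\times\C^2\times\Delta)/(\Lambda_0\rtimes\Gamma)$ it holds because that loop maps to $g_k^{m}=1$ in $\Gamma$, so it lifts to a closed loop in $\rho_m^*U^\circ$.

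Continuing a local lift of $h_k$ once around $b=0$ composes it with translation by $\Omega_a v_k\equiv\gamma(v_k)\,\Omega_a\widehat\gamma \pmod{\Omega_a\Lambda_0}$. Since $\gamma(v_k)=\pm1$, this is $\pm$ the deck generator. So $G_k$ has no single-valued lift between these two covers. Checking $\Z$-equivariance does not address this monodromy; gluing along all branches would collapse each deck orbit on the overlap.

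Topologically, $G_k$ carries the meridian of the fibre over $b=0$ to a core loop on which the defining character of your cover equals $\gamma(v_k)=\pm1$. Hence the untwisted $\Lambda_0\rtimes\Gamma$-cover does not extend across that fibre. The cusp gluing via $E_{\exp}$ is fine; the failure is only at $b=0$.

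To repair this, twist the core by the winding number about $b=0$. For example, pass to $\ell$ with $\e^{\ell}=b$ and let $\ell\mapsto\ell+2\pi\ii$ act together with translation by $\Omega_a v_k$. Alternatively, as the paper does, glue local $\Lambda_0$-charts over a good cover of $\C_b$. Transition lifts exist on double overlaps, and their discrepancy on triple overlaps is a $\Z$-valued \v Cech $2$-cocycle, which is a coboundary because $H^2(\C_b,\Z)=0$.

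\textbf{This also breaks your product argument as written.} On the correctly twisted core, the twisting translation $\Omega_a v_k=\Omega_{a_*}v_k+(a-a_*)\gamma(v_k)e_2$ depends on $a$, so the identity no longer trivializes the core. Conversely, your correction $\zeta\mapsto\zeta\pm(a-a_*)\gamma(v_k)\frac{\log b}{2\pi\ii}e_2$ is not single-valued on any of these quotients, contrary to your claim. Its monodromy is $\Theta_{\exp(2\pi\ii(a-a_*)\gamma(v_k))}$. Because $\Theta$ is effective, this is nontrivial for $a-a_*\notin\Z$, and it is not a deck transformation. So it cannot be inserted near $b=0$ into a core already trivialized by the identity.

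What does work is to apply this correction on the whole twisted core, where it intertwines the actions for $a$ and $a_*$. It extends over each cusp chart as an $x_2$-scaling by a local branch, since $b\neq0$ there. Over $b=0$ it becomes a holomorphic $e_2$-translation in $\mathcal P$-coordinates.

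Alternatively, follow the paper. Write the discrepancy of the lifted transitions as $\Theta_{\exp(2\pi\ii(a-a_*)h_{\alpha\beta})}$, deduce the additive cocycle identity from the effectiveness of $\Theta$, and split the cocycle by Cartan's Theorem~B. That is essentially your fallback, but it requires correctly constructed transitions first.
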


\begin{proof}
On an ordinary or normalized finite-root chart, replacing $\Lambda$ by
$\Lambda_0$ gives the required local cyclic cover.  Its periods are
independent of $a$ by \eqref{nm:eq:invariant-direction}.  On a cusp chart
the same cover is
\begin{equation}\label{nm:eq:local-cusp-cover}
 (Y_\varepsilon/\langle\Psi_{(0,1)}\rangle)\times\Delta
 \longrightarrow (Y_\varepsilon\times\Delta)/\Z^2,
\end{equation}
because $E_{21}(0,n)^t=0$.  All transition maps preserve $\Lambda_0$ and
the positive generator of $\Lambda/\Lambda_0$.  Their lifting obstruction
is therefore a constant $\Z$-valued \v Cech $2$-cocycle on $\C_b$, which
vanishes since $H^2(\C,\Z)=0$.  The local covers consequently glue to the
connected regular covering $q_m$.

It remains to remove the parameter dependence of the lifted transitions.
The ordinary periods restricted to $\Lambda_0$ are independent of $a$.
In the finite-root charts, the logarithmic translations in
\cref{lem:finite-conjugation} vary linearly with $a$ along $e_2$;
on the cusp charts the retained subgroup satisfies $E_{21}(0,n)^t=0$.
The lifted transitions therefore commute with the action $\Theta$ in
\eqref{nm:eq:invariant-direction}, which extends across the fillings.
After choosing logarithms on the overlaps, the transition at $a$
differs from the transition at $a_*$ by
\[
 \Theta_{\exp(2\pi\ii(a-a_*)h_{\alpha\beta}(b))},
 \qquad h_{\alpha\beta}\in\OO(V_\alpha\cap V_\beta),
\]
where $h_{\alpha\beta}$ is independent of $a$.  The transition
identities and the effectiveness of $\Theta$ give
\[
 \exp\!\left(2\pi\ii(a-a_*)
 (h_{\alpha\beta}+h_{\beta\gamma}-h_{\alpha\gamma})\right)=1
 \qquad\text{for every }a\in\Delta.
\]
It follows that $h_{\alpha\beta}+h_{\beta\gamma}=h_{\alpha\gamma}$.
Cartan's Theorem~B now gives $h_{\alpha\beta}=b_\alpha-b_\beta$.
Rescaling the local trivializations by
$\Theta_{\exp(-2\pi\ii(a-a_*)b_\alpha)}$ makes every transition independent
of $a$ and proves \eqref{nm:eq:product}.
\end{proof}

For a fixed $a_*\in\cA$, the cyclic cover $Q_{m,a_*}$ contains a conic band
whose base is an elliptic surface. In this band construction, write $z$ for
the root coordinate $b$ in \eqref{nm:eq:finite-cover}, in agreement with the
elliptic-surface equations below.

\begin{lemma}\label{nm:lem:actual-two-row-band}
For $m=3,4$, the cyclic covering $Q_{m,a_*}$ of
\cref{nm:lem:unwrap} has an analytic-Zariski open subset $Y_m$.
There is a smooth analytic surface $\mathcal E_m^{\rm an}$ proper over $\C_z$,
with precisely
the $m$ nodal fibres $z^m=1$, and a holomorphic line bundle
$L_m\to\mathcal E_m^{\rm an}$ for which $Y_m$ is a proper small
resolution of
\[
 H_m=\{A\otimes B=\Delta_m(z)\}
       \subset L_m\oplus L_m^{-1},
 \qquad
 \Delta_3=z^3-1,\quad \Delta_4=1-z^4.
\]
Its exceptional locus consists of one complete $\Pone$ over the
node of each nodal fibre.
\end{lemma}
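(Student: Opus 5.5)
The plan is to read $Y_m$ off directly from the structure of $Q_{m,a_*}$ as a family of quotients $\C^2/\Lambda_0$. In the basis $(\widehat u,\widehat w,\widehat\delta)$ of $\Lambda_0$ the columns of $\Omega_{a_*}$ are $(\tau,\mu)^t$, $(1,0)^t$ and $(0,1)^t$.

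Over the smooth part of $\C_z$, I will factor $\C^2/\Lambda_0$ in two steps.
\begin{enumerate}[label=\textup{(\arabic*)}]
\item Quotienting by $\widehat\delta$ and exponentiating $\zeta_2$ gives $\C_{\zeta_1}\times\C^*$.
\item The remaining periods $\widehat w,\widehat u$ then act on $\zeta_1$ by the lattice $\Z+\tau\Z$, and on the $\C^*$ coordinate by the multiplier $\e^{2\pi\ii\mu}$ for $\widehat u$.
\end{enumerate}
Hence each fibre is a principal $\C^*$-bundle over the elliptic curve $E_\tau=\C/(\Z+\tau\Z)$. Let $L_m$ be the line bundle with this factor of automorphy. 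On the core, $\mathcal E_m^{\rm an}$ is the elliptic family $\C/(\Z+\tau\Z)$ descended through the $\Gamma$-action $R_g$. Its vertical part is compatible with this splitting because $R_ge_2=e_2$, which lets me pass to the quotient $\C^2/\C e_2$.

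Next I fill this surface over the two kinds of special points of the base.
\begin{itemize}
\item Over the finite points, the root base change $\rho_m$ has already absorbed the multiple fibre. The logarithmic twist of \cref{lem:finite-conjugation}, projected to the $\zeta_1$-line, gives a smooth elliptic filling.
\item The points $z$ with $\rho_m(z)=\infty$ are exactly the roots $z^m=1$. Over each of them I project the cusp model $Y_\varepsilon/\langle\Psi_{(0,1)}\rangle$ of \eqref{nm:eq:local-cusp-cover} along the $\widehat\delta$-direction. This gives the rank-two toric surface whose fan consists of the rays through $(k,1)$, $k\in\Z$, divided by the remaining shift. That quotient is a Tate-curve model whose central fibre is a single nodal rational curve.
\end{itemize}
The result is a smooth surface, proper over $\C_z$, with exactly the $m$ nodal fibres over $z^m=1$. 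I will check properness over $\C_z$ by the argument of \cref{thm:relative-cusp}.

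Inside the three-dimensional cusp model I keep only those cones of $\Sigma$ lying over two adjacent rows of the periodic triangulation, say the strip $\{0\le y_2\le 1\}$ after a $B_0$-translation. Let $Y_m$ be the complement in $Q_{m,a_*}$ of the union of the closed toric orbit closures not meeting this strip.
\begin{itemize}
\item This union is a locally finite union of compact toric strata.
\item It is invariant under $\Psi_{(0,1)}$, since that map shifts along the strip.
\item It therefore descends to a closed analytic subset, so $Y_m$ is analytic-Zariski open. Away from $z^m=1$ nothing is removed.
\end{itemize}
On the strip I introduce the two characters $A,B$ dual to the two rows, normalized so that $AB=q$ holds on the dense torus. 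I then verify the following in order.
\begin{enumerate}[label=\textup{(\alph*)}]
\item $A$ and $B$ transform under the residual shift, and under the $\widehat u$-period on the core, by inverse factors of automorphy. Hence they are sections of $L_m$ and $L_m^{-1}$ pulled back to $Y_m$.
\item The pulled-back cusp coordinate $q\circ\rho_m$ equals $\Delta_m(z)$ up to a holomorphic unit near each root. I intend to absorb that unit into the normalization of $A$; this is why $\Delta_3=z^3-1$ and $\Delta_4=1-z^4$ appear rather than a bare power of $z$.
\item The map $(A,B)$ therefore factors through $H_m=\{A\otimes B=\Delta_m(z)\}$, and it is an isomorphism off the nodes.
\end{enumerate}

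For the small-resolution statement I use the local geometry at a node. The strip triangulation cuts each unit square into two triangles. At the node of a nodal fibre, the coarse cone spanned by the four rays over one square of the strip gives the conifold $\{xy=zw\}$, which is $H_m$ written locally. Subdividing this cone along the square's diagonal gives a small resolution. The exceptional curve is the orbit closure of that diagonal edge, which is a complete $\Pone$. After the quotient by the residual shift there is exactly one such square per nodal fibre. Properness of $Y_m\to H_m$ follows from properness of $F_0$ together with the finiteness of each strip chart.

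I expect the main obstacle to be step (b), together with making the identification global. The delicate points are the following.
\begin{itemize}
\item Matching the toric characters $A,B$ of the cusp chart with honest sections of $L_m^{\pm1}$ defined on the whole of $\mathcal E_m^{\rm an}$, including the finite-point fillings.
\item Checking that the product $A\otimes B$ is exactly $\Delta_m(z)$ and not merely this times a nowhere-vanishing function.
\end{itemize}
For the second point I would first show that $A\otimes B$ is a fibrewise-constant holomorphic function on $\C_z$ vanishing simply exactly at $z^m=1$. Then $A\otimes B=\Delta_m\cdot\e^{g}$ for some entire $g$, and I would rescale $A$ by $\e^{-g}$.
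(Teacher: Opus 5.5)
Your proposal is correct and follows essentially the paper's proof. The paper builds $\mathcal E_m^{\rm an}$ the same way, from the $\C e_2$ (equivalently $x_2$-scaling) quotient together with the fan projection $(i,j,r)\mapsto(i,r)$ to the Tate fan. It also uses the characters $A=x_2$ and $q/x_2$ on the retained rows $j=0,1$, and takes the small resolution from the diagonal of the single unit square per node, written there as the incidence resolution $\{v_is=At,\ Bs=\delta u_it\}$ glued by \eqref{nm:eq:band-projective-transition}.

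The differences are minor. The paper puts the unit $\delta=\Delta_m/q$ into $B$ by defining $B=\Delta_m/A$ directly, so your entire-function renormalization is not needed. Properness of $Y_m\to H_m$ should come from this local toric/incidence description near the nodes, not from properness of $F_0$. Away from $q=0$, the band maps to the cusp quotient with infinite deck-orbit fibres.
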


\begin{proof}
For $(m,j)=(3,2),(4,1)$, write
$U_j=X_{a_*}\setminus S_{j,a_*}$. The normalized root maps
$\rho_3(z)=z^3/(z^3-1)$ and $\rho_4(z)=1/(1-z^4)$ give the finite
unramified covers $N_m=M_{m,a_*}\to U_j$ of degree $m$ from
\cref{nm:eq:finite-cover}. The further
cover $Q_{m,a_*}\to N_m$ retains precisely
\[
 \Lambda_0=\ker\gamma
   =\Z\langle\widehat u,\widehat w,\widehat\delta\rangle .
\]
Over an ordinary base chart its fibre is
\begin{equation}
 \C^2/\Z\langle(\tau,\mu),(1,0),(0,1)\rangle .
 \label{nm:eq:band-ordinary-fibre}
\end{equation}
At a cusp $z=\zeta$, $\zeta^m=1$, the local model is
$Y_\varepsilon/\langle\Psi_{(0,1)}\rangle$, as in
\cref{nm:eq:local-cusp-cover}.  Write its torus coordinates as
$(x_1,x_2,q)$.  Since $B_0(0,1)^t=(1,0)^t$, the generator acts by
\begin{equation}
 (x_1,x_2,q)\longmapsto
  \bigl(q\alpha(q)x_1,\beta(q)x_2,q\bigr),
 \label{nm:eq:band-cusp-action}
\end{equation}
where $\alpha$ and $\beta$ are holomorphic units on the full cusp
disc.  The other deck generator has
$B_0(1,0)^t=(0,-1)^t$, and hence shifts the second, or row,
index by one.

\emph{Step 1: the elliptic quotient.}
The fan of \cref{prop:periodic-conical-fan} has vertices $(i,j,1)$.
Projection
\begin{equation}
 (i,j,r)\longmapsto(i,r)
 \label{nm:eq:band-fan-projection}
\end{equation}
takes its cones into those of the one-dimensional Tate fan. On the
chart between columns $i$ and $i+1$, write
\begin{equation}
 u_i=x_1q^{-i},\qquad v_i=x_1^{-1}q^{i+1},\qquad u_iv_i=q.
 \label{nm:eq:band-base-node}
\end{equation}
The projected column transformation $T$ is induced on the dense torus by
\begin{equation}
 T(x_1,q)=(\vartheta(q)x_1,q),\qquad
 \vartheta(q)=q\alpha(q).
 \label{nm:eq:projected-column-action}
\end{equation}
It sends the $i$-th node chart to the $(i+1)$-st by
\begin{equation}
 (u_i,v_i)\longmapsto
   (u_{i+1},v_{i+1})=(\alpha(q)u_i,\alpha(q)^{-1}v_i).
 \label{nm:eq:node-chart-transition}
\end{equation}
After shrinking the cusp disc, $t=\vartheta(q)=q\alpha(q)$ is a
holomorphic coordinate with $|t|<1$, since $\vartheta'(0)=\alpha(0)\ne0$.
The coordinate changes
\[
 \widetilde u_i=\alpha(q)^{-i}u_i,\qquad
 \widetilde v_i=\alpha(q)^{i+1}v_i,\qquad
 \widetilde u_i\widetilde v_i=t
\]
are restrictions of $(x_1,q)\mapsto(x_1,t)$ on the dense torus, so they
glue on the node charts. In these coordinates, $T$ shifts the $i$-th chart
to the $(i+1)$-st while preserving $\widetilde u_i,\widetilde v_i$.
Thus this is the standard analytic Tate construction with $b=1$
\cite[Subsection~2.1 and Remark~2.2]{ArguzTate}. Its column action is free
and properly discontinuous, and the quotient is a Hausdorff complex surface
with one irreducible nodal rational central fibre.

The quotient is proper over a smaller closed cusp disc. Indeed, over a
compact subset of that disc, the closure in the toric model of the annular
region
\[
 |\vartheta(q)|\leq |x_1|\leq1
\]
is contained in the union of two adjacent node charts and has compact closure:
indeed, $|x_1|\le1$ bounds $u_0=x_1$, while
$|x_1|\ge|q\alpha(q)|$ bounds $v_0=q/x_1$ because $\alpha$ is a unit.  This
closure meets every column orbit over $q\ne0$ and every limiting orbit over
$q=0$. Its image in the column quotient is exactly the inverse image of the
chosen compact base set and is compact.  This proves properness. The local
equation $u_iv_i=q$ shows that the total space is smooth.

On the ordinary charts the free $x_2$-scaling quotient of
\eqref{nm:eq:band-ordinary-fibre} is the elliptic curve
$\C/(\Z+\tau\Z)$.  The transitions used to construct $Q_{m,a_*}$
commute with this scaling and their first coordinate is independent
of $x_2$; the cusp transitions induce
\eqref{nm:eq:band-base-node}. They therefore glue the ordinary
quotients and the nodal cusp quotients to a smooth surface
$p_m:\mathcal E_m^{\rm an}\to\C_z$. Properness is local on the
target and follows on the ordinary and cusp charts as just checked.
Its fibres are connected; the only singular fibres occur at
$z^m=1$, each has one node and multiplicity one.  In particular
the fibration is relatively minimal.

At each cusp retain the central toric divisors in rows $j=0,1$
and delete those in every other row.  Retain all noncentral points,
and call the resulting subset of $Q_{m,a_*}$ $Y_m$.
The deleted divisors form a locally finite analytic family before
the column quotient.  The quotient action only translates columns,
so they remain locally finite after quotient; there are only
finitely many cusps.  Their union is consequently closed analytic.
Thus $Y_m$ is analytic-Zariski open in $Q_{m,a_*}$.

\emph{Step 2: descent of the line coordinate.}
On the two retained rows the characters
\begin{equation}
 A=x_2,\qquad B^{\rm tor}=q/x_2
 \label{nm:eq:band-conic-characters}
\end{equation}
have nonnegative pairing with every retained ray and hence extend
holomorphically. Over $\Delta_m(z)\ne0$, the free $x_2$-scaling gives
a principal $\C^*$-bundle. Its associated line bundle extends over the
cusp charts as follows. Use $A=x_2$ as the local line coordinate over
the ordinary quotient charts and the node charts
\eqref{nm:eq:band-base-node}. The scaling-equivariant transitions of
$Q_{m,a_*}$ read
\[
 A_\alpha=g_{\alpha\beta}A_\beta,
\]
with $g_{\alpha\beta}$ a holomorphic unit on the corresponding overlap
in the elliptic base. Ordinary-to-cusp overlaps lie in $q\ne0$.
Between adjacent column charts, the transition is the unit $\beta(q)$
from \eqref{nm:eq:band-cusp-action}, which is holomorphic and nonzero
on the full cusp disc. The transition identities for $Q_{m,a_*}$ give
$g_{\alpha\beta}g_{\beta\gamma}=g_{\alpha\gamma}$ on every triple
overlap. These units are consequently the transition functions of a
holomorphic line bundle $L_m$ on all of $\mathcal E_m^{\rm an}$.

The global discriminant determines the dual coordinate. On $A\ne0$,
this coordinate is
\[
 B=\frac{\Delta_m(z)}{A}.
\]
Near a node, the quotient
\begin{equation}
 \delta(z)=\frac{\Delta_m(z)}q
 \label{nm:eq:discriminant-unit}
\end{equation}
is a holomorphic unit, and hence
$B=\delta(z)B^{\rm tor}$ extends across the retained toric charts.  Since
$\Delta_m$ is global and $A_\alpha=g_{\alpha\beta}A_\beta$, one has
$B_\alpha=g_{\alpha\beta}^{-1}B_\beta$.  Thus $B$ is the fibre coordinate of
$L_m^{-1}$, and the characters give a global holomorphic map to
\[
 H_m=\{A\otimes B=\Delta_m(z)\}\subset L_m\oplus L_m^{-1}.
\]

\emph{Step 3: the global contraction.}
On a node chart, \eqref{nm:eq:band-base-node} and
\eqref{nm:eq:discriminant-unit} give
\[
 AB=\delta u_iv_i=\Delta_m(z).
\]
The two triangles in each unit square select the same ruling as the
incidence resolution
\[
 \{v_is=At,\ Bs=\delta u_it\}
 \subset\{AB=\delta u_iv_i\}\times\Pone_{[s:t]}.
\]
Its two affine charts are
\begin{equation}
 v_i=Ap,\quad B=\delta u_ip,\qquad
 u_i=\delta^{-1}Br,\quad A=v_ir,\qquad r=1/p.
 \label{nm:eq:band-small-resolution}
\end{equation}
On an overlap of node charts write
\[
 u_\alpha=\kappa_{\alpha\beta}u_\beta,\qquad
 v_\alpha=\kappa_{\alpha\beta}^{-1}v_\beta,\qquad
 A_\alpha=g_{\alpha\beta}A_\beta,\qquad
 B_\alpha=g_{\alpha\beta}^{-1}B_\beta,
\]
where both $\kappa_{\alpha\beta}$ and $g_{\alpha\beta}$ are holomorphic
units.  For adjacent column charts,
$\kappa_{\alpha\beta}=\alpha(q)$ and
$g_{\alpha\beta}=\beta(q)$ by
\eqref{nm:eq:node-chart-transition} and
\eqref{nm:eq:band-cusp-action}.  The projective transition is
\begin{equation}
 [s_\alpha:t_\alpha]
   =[s_\beta:(\kappa_{\alpha\beta}g_{\alpha\beta})^{-1}t_\beta].
 \label{nm:eq:band-projective-transition}
\end{equation}
Indeed, the two incidence equations transform as
\[
 v_\alpha s_\alpha
  =\kappa_{\alpha\beta}^{-1}v_\beta s_\beta
  =A_\alpha t_\alpha,
 \qquad
 B_\alpha s_\alpha
  =g_{\alpha\beta}^{-1}B_\beta s_\beta
  =\delta u_\alpha t_\alpha.
\]
Thus the node-coordinate and line-coordinate changes are both included, and
the local incidence resolutions glue.  Their affine charts are exactly the
two retained toric rows in \eqref{nm:eq:band-small-resolution}; away from the
nodes the contraction is an isomorphism.  Hence they give a global map
$Y_m\to H_m$.

Near each of the finitely many nodes the incidence space is a closed analytic
subspace of $H_m\times\Pone$, so its projection is proper; away from those
neighbourhoods the contraction is an isomorphism.  Properness is local on the
target, and therefore $Y_m\to H_m$ is proper.  Its fibre over each node is the
complete $\Pone$ obtained from the two affine exceptional lines in
\eqref{nm:eq:band-small-resolution}. This proves the assertion.
\end{proof}

The proper elliptic fibration $p_m:\mathcal E_m^{\rm an}\to\C_z$ is
relatively minimal and has no multiple fibres by
\cref{nm:lem:actual-two-row-band}. The section theorem of
Buzzard and Lu~\cite[Lemma~3.8]{BuzzardLu} gives a holomorphic section $O$.

The period data identify the elliptic surface underlying the conic
band with an explicit algebraic model. The relevant affine hypersurfaces are
Danielewski surfaces:
\begin{align}
 W_3&=\{uv=w(w+2\ii)\},&z&=(4u^2-v)/3,\label{nm:eq:base3}\\
 W_4&=\{uv=w(4w^2-3)\},&z&=(u-v)/(2\ii).\label{nm:eq:base4}
\end{align}

\begin{lemma}\label{nm:lem:algebraic-band-base}
There are fibre-preserving biholomorphisms
$\mathcal E_m^{\rm an}\simeq E_m^{\rm an}$, where $E_m$ is the
Weierstrass surface over $\mathbf A^1_z$
\begin{equation}
 E_3:\ y^2=4x^3-3zx-1,\qquad
 E_4:\ y^2=4x^3-3x-z^2,
 \label{nm:eq:realized-elliptic-surfaces}
\end{equation}
with its section at infinity included.  The complement of this section in
$E_m$ is the affine Danielewski surface $W_m$ displayed in
\eqref{nm:eq:base3} or \eqref{nm:eq:base4}, respectively.
\end{lemma}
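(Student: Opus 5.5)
Both surfaces are relatively minimal elliptic fibrations over $\C_z$ without multiple fibres, and each carries a section ($O$ from Buzzard--Lu on $\mathcal E_m^{\rm an}$, the section at infinity on $E_m$). We identify them through their $J$-invariant and monodromy and then invoke the analytic uniqueness of a Weierstrass model with prescribed period data. First we compute the discriminants and $J$-functions of the two Weierstrass equations. For $E_3$, $g_2=3z$ and $g_3=1$, so $g_2^3-27g_3^2=27(z^3-1)$, and $J=z^3/(z^3-1)$. For $E_4$, $g_2=3$ and $g_3=z^2$, so $27-27z^4=27(1-z^4)$, and $J=1/(1-z^4)$. These are exactly $\rho_3$ and $\rho_4$. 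The singular fibres therefore lie at $z^m=1$; at each such point $\Delta$ has a simple zero while $g_2,g_3$ do not vanish, so each is a single $I_1$ node. This matches \cref{nm:lem:actual-two-row-band}. For $\mathcal E_m^{\rm an}$, the fibre over $z$ is $\C/(\Z+\tau\Z)$ by \eqref{nm:eq:band-ordinary-fibre}, with $\tau$ pulled back along the root map. Since $\tau$ is the period of the $(3,4,\infty)$ triangle uniformization, the normalization in \cref{prop:period-data-existence} and the transformation laws \eqref{eq:period-generator-laws} show that $J(\tau)$ is the base coordinate of $\Pone$, and its pullback is $\rho_m(z)$. So the two functional invariants agree.

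Next we compare the homological monodromy on $H_1$ of the fibre. On $\mathcal E_m^{\rm an}$ it is the action on $\Z\langle\widehat u,\widehat w\rangle$ modulo $\widehat\delta$, which is the image of the $2\times2$ blocks of $A_1,A_2$ in $\mathrm{SL}(2,\Z)$. By \eqref{eq:period-generator-laws} these are the standard order-$3$ and order-$4$ elements $\tau\mapsto(\tau-1)/\tau$ and $\tau\mapsto-1/\tau$, and $g_0$ gives the unipotent $\tau\mapsto\tau-1$. After the root base change, the order-$m$ loop around $0$ or $1$ is killed. The remaining local monodromies around $z^m=1$ are conjugates of $g_0$, hence of type $I_1$. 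For a Weierstrass family with given $J$, the monodromy is determined by $J$ up to a global sign, that is, up to a quadratic twist by a line bundle on $\C_z$ or a character $\pi_1\to\{\pm1\}$. The base $\C_z$ minus $m$ points has free fundamental group, so we check the sign loop by loop. Both families have $I_1$ (not $I_1^*$) fibres at every $z^m=1$ and no singular fibre at $z=0$. The twist character is therefore trivial around each puncture, hence trivial. We expect this sign bookkeeping to be the main obstacle. Our first approach is to compare local fibre types, using that a $-1$ twist would turn $I_1$ into $I_1^*$ and would create a $D_4$-type fibre at $J$-regular points.

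With equal $J$ and monodromy, both surfaces are analytic relatively minimal elliptic surfaces with section over the Stein base $\C_z$. By Kodaira's theory, each is the compactified Weierstrass model of its $(J,\text{monodromy})$ data, twisted by the fundamental line bundle $\mathcal L$. Over $\C_z$ every line bundle is trivial, so the fibrations are fibre-preservingly biholomorphic, with $O$ mapping to the section at infinity. Equivalently, Weierstrass coefficients are determined up to $(g_2,g_3)\mapsto(u^4g_2,u^6g_3)$ with $u\in\OO^*(\C)$.

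Finally, the Danielewski description is an explicit algebraic substitution, removing the section at infinity from $E_m$. For $E_3$ we set $w$ linear in $y$ and put $u,v$ affine in $x$ and $y$ (for example $y=\ii(w+\ii)$-type shifts), so that $y^2+1$ factors and $z=(4u^2-v)/3$ eliminates $z$. For $E_4$ we write $z^2$ as a product $\frac{(u-v)^2}{-4}$ and complete the factorization $4x^3-3x=x(4x^2-3)$ with $x=w$. In each case we verify by direct substitution that the map is an isomorphism of affine surfaces. Smoothness of $E_m$ at infinity is standard for Weierstrass models with nonvanishing leading coefficient.
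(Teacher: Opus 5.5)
Your argument is correct, but it runs differently from the paper's. The paper does not compare two given surfaces. With the Buzzard--Lu section it applies Nakayama's Weierstrass-model theorem to $\mathcal E_m^{\rm an}$ itself. Since the fundamental line bundle is trivial on $\C$, this gives a global equation $y^2=4x^3-g_2(z)x-g_3(z)$ with $g_2,g_3\in\cO(\C)$. The paper then \emph{derives} the coefficients from $j=1728\rho_m$:
\begin{itemize}
\item $j\neq1728$ (for $m=3$), resp.\ $j\neq0$ (for $m=4$), at every smooth fibre, and $g_2g_3\neq0$ at the $I_1$ fibres;
\item hence $g_3$, resp.\ $g_2$, is a unit on $\C$, and a unit rescaling of $(x,y)$ normalizes it;
\item the $j$-formula then forces $g_2^3=27z^3$, resp.\ $g_3^2=z^4$, and a constant root-of-unity rescaling finishes.
\end{itemize}
In that route the quadratic-twist ambiguity is absorbed automatically, because a nontrivial twist would destroy the unit.

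You instead verify that the guessed models have the same $J=\rho_m$ and the same local fibre types. You then kill the sign character loop by loop, and this is done correctly: trace $-2$ excludes $I_1^*$, and the loop around $z=0$ has trivial monodromy on both sides. Finally you invoke uniqueness of the basic member with invariants $(J,G)$. This makes the twist issue explicit and conceptual. It does require Kodaira's uniqueness over the noncompact base $\C_z$, which you should justify by noting that the argument is local over the base. Over $\C\setminus(\{0\}\cup\{z^m=1\})$ a family with zero section is $\mathbb V/\Lambda$, determined by the monodromy and the period map lifting $J$. Relatively minimal extensions across the remaining points are unique.

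A few smaller points:
\begin{itemize}
\item Triviality of line bundles on $\C$ is not what removes the ambiguity; your sign check is.
\item The $2\times2$ blocks of $A_1,A_2$ are not needed, since only the fibre types of \cref{nm:lem:actual-two-row-band} enter.
\item Your claim $J\circ\tau=t$ is fine once spelled out. $J\circ\tau$ is $\Gamma$-invariant with a simple pole at $p_0$ by \eqref{eq:cusp-period-unit}, hence affine in $t$. The fixed points $\tau(z_1)=\e^{\pi\ii/3}$ and $\tau(z_2)=\ii$ give the values $0$ and $1$ at $t=0,1$.
\end{itemize}

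Your coordinate hints for $E_3$ are not right as stated. If $z=(4u^2-v)/3$, then $v$ must involve $z$, so it cannot be affine in $x,y$. Also, $y=\ii(w+\ii)$ does not turn $y^2+1$ into $w(w+2\ii)$. The maps that work are as follows. For $E_3$, take $u=x$, $v=4x^2-3z$, $w=y-\ii$, so that $uv=4x^3-3zx=y^2+1=w(w+2\ii)$. For $E_4$, take $u=y+\ii z$, $v=y-\ii z$, $w=x$. Both are triangular or linear automorphisms of $\C^3$, so they identify $E_m\setminus O$ with $W_m$.
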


\begin{proof}
With the section $O$, Nakayama's Weierstrass-model theorem
\cite[Theorem~2.1]{NakayamaWeierstrass} gives a proper bimeromorphic
morphism from $\mathcal E_m^{\rm an}$ to its Weierstrass model over
$\C$. Every fibre is irreducible and meets $O$, so no fibre component
is contracted and this morphism is an isomorphism. The fundamental
line bundle is trivial over $\C$, so the model has a global equation
\begin{equation}
 y^2=4x^3-g_2(z)x-g_3(z),\qquad g_2,g_3\in\cO(\C).
 \label{nm:eq:band-general-Weierstrass}
\end{equation}

 Alp\"oge's normalization $j(\tau)=1728t$, together with the root maps
 $t=\rho_3(z)=z^3/(z^3-1)$ and
 $t=\rho_4(z)=1/(1-z^4)$ from
 \cref{nm:eq:finite-cover}, gives
\begin{equation}
 j_3(z)=1728\,\frac{z^3}{z^3-1},
 \qquad j_4(z)=\frac{1728}{1-z^4}.
 \label{nm:eq:band-j-functions}
\end{equation}
For $m=3$, $g_3$ cannot vanish at a smooth fibre, since $j_3$
there never equals $1728$; at a nodal fibre both $g_2$ and
$g_3$ are nonzero.  It is therefore a holomorphic unit.
Its sixth root exists on simply connected $\C$, and scaling
$(x,y)$ makes $g_3=1$.  The formula
\begin{equation}
 j=1728\,\frac{g_2^3}{g_2^3-27g_3^2}
 \label{nm:eq:band-j-formula}
\end{equation}
then gives $g_2^3=27z^3$; a remaining constant root-of-unity
scaling makes $g_2=3z$.  For $m=4$, $j_4$ never vanishes at
a smooth fibre, so $g_2$ is a unit.  A fourth-root scaling
makes $g_2=3$, after which
\eqref{nm:eq:band-j-formula} gives $g_3^2=z^4$.
The remaining constant choice makes $g_3=z^2$.
This proves \eqref{nm:eq:realized-elliptic-surfaces}.  Its
discriminants are $27(z^3-1)$ and $27(1-z^4)$, as required.

The complement of $O=[0:1:0]$ in $E_3$ has coordinates
\[
 u=x,\quad w=y-\ii,\quad v=4x^2-3z,
 \qquad uv=w(w+2\ii),\quad z=(4u^2-v)/3.
\]
For $E_4\setminus O$, the coordinates
\[
 u=y+\ii z,\quad v=y-\ii z,\quad w=x,
 \qquad uv=w(4w^2-3),\quad z=(u-v)/(2\ii)
\]
give the asserted models.
\end{proof}

Write $\varrho_m:Y_m\to H_m$ for the small resolution in
\cref{nm:lem:actual-two-row-band}. Since the conifold points lie in
$\{A=B=0\}$, the principal open
\begin{equation}\label{nm:eq:principal-conic-open}
 Y_{m,A}:=\varrho_m^{-1}(H_m\cap\{A\ne0\})\simeq L_m^\times
\end{equation}
is a holomorphic principal $\C^*$-bundle over $\mathcal E_m^{\rm an}$.

In the Danielewski coordinates, the algebraic open
$Q^\circ=\{u\ne0\}\subset W_m$ is isomorphic to
$\C_u^*\times\C_w$. Since $Q^\circ$ is Stein and has the homotopy type
of a circle, the Oka--Grauert principle
\cite[Theorem~5.3.1(ii)]{ForstnericBook} makes
$L_m|_{Q^\circ}$ holomorphically trivial.
Choose such a trivialization, so that $A$ is a scalar fibre coordinate.
For a fixed $w_0\in\C$, write $\eta=w-w_0$. Then
\begin{equation}\label{eq:band-product-open}
 T=Q^\circ\setminus\{w=w_0\}\simeq(\C^*)^2_{u,\eta},
 \qquad
 V=Y_{m,A}|_T\simeq(\C^*)^3_{u,\eta,A}.
\end{equation}
The complement of $T$ in $E_m$ is algebraic, so $V$ is
analytic-Zariski open in $Y_{m,A}$.

\begin{proposition}\label{nm:prop:one-fibre-oka}
For every $a_*\in\cA$, the principal opens $Y_{m,A}$, the bands $Y_m$,
the cyclic covers $Q_{m,a_*}$, the two complements
$X_{a_*}\setminus S_{1,a_*}$ and $X_{a_*}\setminus S_{2,a_*}$, and
$X_{a_*}$ are Oka.
\end{proposition}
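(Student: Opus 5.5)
The plan is a chain of analytic extensions, each passing the Oka property from a nonempty analytic-Zariski open subset to its ambient manifold by \cite[Theorem~1.9]{AnDuGuoWangXieLAP}, with one descent step through a holomorphic covering. The chain is
\[
V\subset Y_{m,A}\subset Y_m\subset Q_{m,a_*}\to M_{m,a_*}\to X_{a_*}\setminus S_{j,a_*}\subset X_{a_*}.
\]

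First, by \eqref{eq:band-product-open} the open set $V\simeq(\C^*)^3$ is a complex Lie group, hence elliptic and therefore Oka by Gromov's theorem \cite{GromovOka}. Since $V$ is analytic-Zariski open in the connected manifold $Y_{m,A}$, analytic extension makes $Y_{m,A}$ Oka. Connectedness of $Y_{m,A}$ holds because it is a $\C^*$-bundle over the connected surface $\mathcal E_m^{\rm an}$.

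Next, $Y_{m,A}=\varrho_m^{-1}(H_m\cap\{A\ne0\})$ is the complement in $Y_m$ of the closed analytic set $\varrho_m^{-1}(\{A=0\})$. Here $A$ is a holomorphic section of the pullback of $L_m$, so this complement is nonempty and analytic-Zariski open, and analytic extension gives that $Y_m$ is Oka. By \cref{nm:lem:actual-two-row-band}, $Y_m$ is itself analytic-Zariski open in $Q_{m,a_*}$, so the same theorem shows that the connected cyclic cover $Q_{m,a_*}$ of \cref{nm:lem:unwrap} is Oka.

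Then $Q_{m,a_*}\to M_{m,a_*}\to X_{a_*}\setminus S_{j,a_*}$ is a composition of the $\Z$-covering $q_m$ and the finite unramified covering $r_m$, so it is a holomorphic covering map. A manifold is Oka if and only if some (equivalently, every) holomorphic covering space of it is Oka \cite[Proposition~2.8]{ForstnericSurvey}. Applying this to both coverings, with $(m,j)=(4,1)$ and $(3,2)$, shows that $X_{a_*}\setminus S_{1,a_*}$ and $X_{a_*}\setminus S_{2,a_*}$ are Oka. Finally, $S_{j,a_*}$ is a compact analytic surface in the connected manifold $X_{a_*}$, so one more analytic extension gives that $X_{a_*}$ is Oka. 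In fact a single complement suffices for this last step.

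The main obstacle is checking the hypotheses of the analytic extension theorem at each step. The theorem needs an ambient connected complex manifold and a nonempty analytic-Zariski open subset. Two points need care:
\begin{itemize}
\item Connectedness of $Q_{m,a_*}$ is supplied by \cref{nm:lem:unwrap}, and connectedness of $Y_m$ follows because it contains the dense connected open set $Y_{m,A}$.
\item Analyticity of the deleted set in $Q_{m,a_*}$ requires local finiteness of the deleted toric rows after the column quotient; this was established in the construction of $Y_m$.
\end{itemize}
If the analytic extension theorem also required the complement to have a particular codimension, I would check that it allows divisors. The complements here, $\{A=0\}$ and the deleted rows, are hypersurfaces, and analytic extension is used this way in the introduction for the complement of a multiple fibre, so I expect no codimension restriction.
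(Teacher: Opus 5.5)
Your proposal is correct and follows the paper's proof step for step. The chain is the same: $V\simeq(\C^*)^3$ is Oka; analytic extension \cite[Theorem~1.9]{AnDuGuoWangXieLAP} passes through $Y_{m,A}\subset Y_m\subset Q_{m,a_*}$; covering invariance \cite[Proposition~2.8]{ForstnericSurvey} descends to both complements $X_{a_*}\setminus S_{j,a_*}$; and one final analytic extension from either complement gives $X_{a_*}$. Your additional checks on connectedness and on the analyticity of the deleted loci make explicit what the paper leaves implicit.
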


\begin{proof}
The product $V\simeq(\C^*)^3$ in \eqref{eq:band-product-open} is Oka.
Analytic extension \cite[Theorem~1.9]{AnDuGuoWangXieLAP} first makes
$Y_{m,A}$ Oka and then $Y_m$ Oka, since each contains the preceding
space as an analytic-Zariski open subset. One band $Y_m$ is itself
analytic-Zariski open in $Q_{m,a_*}$, so the same theorem makes
$Q_{m,a_*}$ Oka.  For $(m,j)=(4,1)$ and $(3,2)$, the composite
\[
 Q_{m,a_*}\longrightarrow M_{m,a_*}\longrightarrow
 X_{a_*}\setminus S_{j,a_*}
\]
is an unramified holomorphic covering. Invariance of the Oka property
under holomorphic coverings \cite[Proposition~2.8]{ForstnericSurvey}
therefore makes both complements Oka. Finally, either complement is a nonempty
analytic-Zariski open
subset of $X_{a_*}$, so analytic extension proves that $X_{a_*}$ is Oka.
\end{proof}

\begin{proof}[Proof of \cref{nm:thm:oka-local}]
The absolute assertions follow from
\cref{nm:prop:one-fibre-oka}.  Fix $(m,j)=(4,1)$ or $(3,2)$.  By
\cref{nm:lem:unwrap}, the composite covering over $\mathcal U_j$ has the
form
\begin{equation}\label{nm:eq:product-cover}
 P_m:Q_{m,a_*}\times\Delta\longrightarrow\mathcal U_j,
 \qquad \pi\circ P_m=\pr_\Delta.
\end{equation}
Here $\pr_\Delta$ is the projection onto $\Delta$. Since $Q_{m,a_*}$ is
Oka, the fibre-bundle Oka principle \cite[Corollary~5.7]{KusakabeMaps}
gives this projection the parametric Oka property with approximation and
interpolation (POPAI). The holomorphic covering $P_m$ is a fibre bundle
with discrete Oka fibres, so the same principle gives it POPAI. Since a
covering is a Serre fibration, $P_m$ is a surjective Oka map. Therefore
\cite[Corollary~5.6]{KusakabeMaps} gives POPAI for $\pi|_{\mathcal U_j}$.

The sets $\mathcal U_1$ and $\mathcal U_2$ form an analytic-Zariski open
cover of $\mathfrak X_\Delta$, so
\cite[Corollary~5.13]{KusakabeMaps} gives POPAI for $\pi$.  The relative
compactification theorem makes $\pi$ a proper holomorphic submersion, and
Ehresmann's theorem makes it a Serre fibration.  Hence $\pi$ is an Oka
map.
\end{proof}


\section{Point modifications and band complements}
\label{sec:point-complements}

For the point-deletion argument, fix $a\in\cA$ and abbreviate $X=X_a$.  Write
$M_m=M_{m,a}$ and $Q_m=Q_{m,a}$ for the finite and cyclic covers from
\cref{nm:eq:finite-cover,nm:lem:unwrap}, and let $\Xi_m\simeq\Z$ be the
deck group of $Q_m\to M_m$.  All noncentral points belong to every
translated two-row band; this common principal smooth locus is denoted by
$\mathcal K_m$.

If $R\subset M_m$ is finite and $\widehat R\subset Q_m$ is its complete
inverse image, then for every two-row band $Y\simeq Y_m$ there is a
decomposition
\begin{equation}\label{eq:core-toroidal-decomposition}
 \widehat R\cap Y=D_Y\sqcup F_Y.
\end{equation}
Here $D_Y$ is a finite union of full $\Xi_m$-orbits through points of
$\mathcal K_m$, whereas $F_Y$ is finite.  Indeed, the lift of each point
is one deck orbit.  A core orbit lies in every band, while a toroidal
orbit meets a two-row band in at most two points because the deck
generator shifts the row index by one.

Every orbit occurring in $D_Y$ lies over one of finitely many curves
\begin{equation}\label{eq:common-core-carrier}
 C_\nu=\{z=z_\nu\}\subset E_m,
 \qquad \Delta_m|_{C_\nu}=\Delta_m(z_\nu)\in\C^*.
\end{equation}
The following elementary choice will separate their logarithmic lifts.

\begin{lemma}\label{lem:proper-character}
For finitely many reduced algebraic curves in $(\C^*)^2$, there is a
monomial $M(u,v)=u^rv^s$ whose restriction to the normalization of every
irreducible component is proper and nonconstant, and hence finite.
\end{lemma}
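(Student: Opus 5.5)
The plan is to reduce properness of a monomial on a curve to a condition on the curve's places at infinity, read off from Puiseux/valuation data. Let $C_1,\dots,C_k\subset(\C^*)^2$ be the irreducible components and $\nu_i:\widetilde C_i\to C_i$ their normalizations. Each $\widetilde C_i$ is a smooth affine curve, and its smooth projective completion $\overline C_i$ adds finitely many points $\xi$, the places at infinity. At each such place, $u$ and $v$ are meromorphic, so we get an integer valuation vector
\[
 w_\xi=(\operatorname{ord}_\xi u,\operatorname{ord}_\xi v)\in\Z^2.
\]
This vector is nonzero. If both orders were zero, then $u,v$ would extend holomorphically with nonzero values near $\xi$, so $\nu_i$ would extend over $\xi$ into $(\C^*)^2$. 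Since $C_i$ is closed in $(\C^*)^2$, $\xi$ would then lie on $\widetilde C_i$, a contradiction.

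For the monomial $M=u^rv^s$, we have $\operatorname{ord}_\xi M=\langle(r,s),w_\xi\rangle$. Properness of $M\circ\nu_i:\widetilde C_i\to\C$ means that $|M|\to\infty$ at every place at infinity, i.e. $\operatorname{ord}_\xi M<0$ for every such $\xi$.

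So I would choose $(r,s)\in\Z^2$ with
\[
 \langle(r,s),w_\xi\rangle<0\quad\text{for every place }\xi\text{ of every component}.
\]
The finitely many nonzero vectors $w_\xi$ do not obviously lie in an open half-plane. The main obstacle is that they can genuinely fail to: a component such as $\{u+v=1\}$ has places at infinity with $w$-vectors $(1,0)$, $(0,1)$, $(-1,-1)$, and these positively span $\R^2$. In that case no monomial works into $\C$.

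Proper maps into $\C^*$, however, are available in general. The requirement becomes $\langle(r,s),w_\xi\rangle\neq0$ for all $\xi$, which is arranged by avoiding finitely many lines through the origin. So I expect the lemma's "proper" to be intended with target $\C^*$, the natural target of a character on $(\C^*)^2$. This matches the later use, where the logarithmic lift is taken through $\log M$ and properness into $\C^*$ is exactly what is needed. I would therefore prove the statement with target $\C^*$ and check this reading against the paper's subsequent argument.

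With the lines avoided, at each place $|M|\to0$ or $|M|\to\infty$. Hence $M\circ\nu_i:\widetilde C_i\to\C^*$ is proper, because preimages of compact subsets of $\C^*$ stay away from all places at infinity and so are compact in $\overline C_i$ minus those places. The map is nonconstant because some $\operatorname{ord}_\xi M\ne0$, and a curve has at least one place at infinity since it is affine and not compact. A proper nonconstant holomorphic map of Riemann surfaces has discrete, compact, hence finite fibres, which gives finiteness.

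If a component is a translated subtorus, its places at infinity are $\pm w$ for a single direction $w$. Then $M$ restricts to a nonconstant power character, which is indeed proper to $\C^*$ but never to $\C$. This confirms the interpretation above.
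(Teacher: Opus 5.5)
Your proof is correct and is essentially the paper's argument: nonzero order vectors $(\operatorname{ord}_\xi u,\operatorname{ord}_\xi v)$ at the boundary points of the completed normalizations, and a choice of $(r,s)$ off finitely many lines through the origin, so that the character tends to $0$ or $\infty$ at every boundary branch. Your reading of ``proper'' as proper into $\C^*$ is exactly what the paper's proof establishes. It is also what the later application needs, namely that bounded $\log M$ keeps $M$ in a compact annulus.
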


\begin{proof}
Complete each normalized affine curve to a smooth projective curve.  At
every boundary point, the two orders of $u$ and $v$ do not both vanish;
otherwise the branch would extend to a point of the original closed curve
inside $(\C^*)^2$.  Choose $(r,s)\in\Z^2$ outside the finitely many
rational lines on which
$r\operatorname{ord}u+s\operatorname{ord}v=0$.  The character then tends
to $0$ or $\infty$ at every boundary branch, so it is proper and
nonconstant. It is therefore finite.
\end{proof}

\begin{proposition}\label{prop:punctured-band}
Suppose $D\subset\mathcal K_m$ is a finite union of full $\Xi_m$-orbits and
$F\subset Y_m$ is finite. Then
\[
 Y_m\setminus(D\cup F)
 \quad\text{and}\quad
 Y_{m,A}\setminus(D\cup F)
\]
are Oka.
\end{proposition}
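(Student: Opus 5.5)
The plan is to find a single nonempty analytic-Zariski open subset of $Y_{m,A}\setminus(D\cup F)$ that is visibly Oka. Analytic extension \cite[Theorem~1.9]{AnDuGuoWangXieLAP} then gives $Y_{m,A}\setminus(D\cup F)$. The removed set $D\cup F$ is closed analytic in $Y_m$: $F$ is finite, and $D$ is a finite union of deck orbits. Hence $Y_{m,A}\setminus(D\cup F)$ is analytic-Zariski open in $Y_m\setminus(D\cup F)$, and a second application of analytic extension gives the band statement. So everything reduces to producing an Oka analytic-Zariski open subset inside the principal open $V\simeq(\C^*)^3_{u,\eta,A}$ of \eqref{eq:band-product-open}, with $D\cup F$ deleted.

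\emph{Step 1: simplify the finite part.} Choose $w_0$ in \eqref{eq:band-product-open} and the trivialization so that $F\cap V$ lies over finitely many points of $T$. Then enlarge the deleted locus: remove from $T$ the finitely many fibres $\{u=u_k\}$ through the base points of $F$. We may instead replace $T$ by $T\setminus\{M=c_k\}$ for a suitable character $M$; see Step 2. This gives a smaller analytic-Zariski open set avoiding $F$ entirely. Alternatively we keep $F$ and use the fact that the complement of finitely many points in $(\C^*)^3$ is Oka, via analytic extension from $(\C^*)^3$ minus hypersurfaces through them.

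\emph{Step 2: the infinite orbits.} This is the main obstacle. Each full $\Xi_m$-orbit in $D$ meets the band in infinitely many points, all lying over the curves $C_\nu$ of \eqref{eq:common-core-carrier}. In the chart $V$ the deck generator acts by multiplication of $A$ by a unit, namely the transition $\beta$ of \eqref{nm:eq:band-cusp-action}. Consequently $D\cap V$ lies over the finitely many curves $C_\nu\cap T\subset(\C^*)^2_{u,\eta}$, and over each point of such a curve the fibre coordinate $A$ runs through a discrete multiplicative orbit, which is the ``logarithmic lift.'' Apply \cref{lem:proper-character} to the curves $C_\nu\cap T$ to get a monomial $M=u^r\eta^s$ that is finite on each of them. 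Pass to the logarithmic cover $\C_\ell\times\C^*\times\C^*_A\to V$ with $M=\e^{\ell}$ after a $\GL_2(\Z)$ change of torus coordinates making $M$ a coordinate. Properness of $M$ on the curves makes the preimage of $D$ a discrete set whose projection to the $(\ell, \cdot)$ directions is tame, i.e.\ locally finite and contained in a closed set with tame coordinate projection. I would then invoke the standard result that complements of tame discrete sets in $\C^n$, or in $\C\times(\C^*)^2$, are Oka. If necessary, I would first remove the finitely many curves $C_\nu$ themselves only over a Zariski-closed part, using the finiteness of $M|_{C_\nu}$ to control the fibres.

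\emph{Step 3: descend and conclude.} The deleted set is deck-invariant under the $\Z$-covering induced by $\ell\mapsto\ell+2\pi\ii$, so its complement upstairs covers $V\setminus(D\cup F)$. Invariance under coverings \cite[Proposition~2.8]{ForstnericSurvey} makes $V\setminus(D\cup F)$ Oka. Its complement in $Y_{m,A}\setminus(D\cup F)$ is the restriction of the algebraic complement of $T$, hence analytic. Two analytic extensions then finish the proof.

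The delicate point I expect is verifying tameness uniformly for all orbit points over all $C_\nu$ simultaneously. Here the proper character must separate the curves at infinity, and the multiplicative $A$-orbits must not accumulate; the latter is guaranteed by the proper discontinuity of the deck action.
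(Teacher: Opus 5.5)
Your architecture (the chart $V\simeq(\C^*)^3$, \cref{lem:proper-character}, a logarithmic lift, tameness, covering invariance, two analytic extensions) matches the paper's, but the decisive Step~2 does not go through as written.

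\textbf{The tameness step.} The tame-complement theorem is a statement about discrete sets in $\C^n$. There is no standard version in $\C\times(\C^*)^2$, so stopping at the partial cover $\C_\ell\times\C^*\times\C^*_A$ leaves you without a tool. Also, ``projection to the $(\ell,\cdot)$ directions is tame'' is not a tameness proof.

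The paper lifts completely along $(x,y,\sigma)\mapsto(\e^x,\e^y,\e^\sigma)=(u,\eta,A)$. It takes the full preimage $\widetilde E\subset\C^3$ of $E=(D\cup F)\cap V$ and applies the Rosay--Rudin criterion to $(x,y,\sigma)\mapsto(rx+sy,\sigma)$. Both output coordinates are needed:
\begin{enumerate}
\item a bounded region of $\C^2$ puts $|M|$ and $|A|$ in compact annuli;
\item properness of $M$ on the components of $C_\nu\cap T$, together with finiteness of $F$, then confines the base points to a compact subset of $T$;
\item hence only finitely many points of the closed discrete set $E$ occur, and each contributes a coset of $2\pi\ii\Z^3$ whose image lies in $(rx_0+sy_0+2\pi\ii\Z)\times(\sigma_0+2\pi\ii\Z)$.
\end{enumerate}

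\textbf{Your description of the orbits.} It would not support this argument. The unit $\beta(q)$ in \eqref{nm:eq:band-cusp-action} belongs to $\Psi_{(0,1)}$, a generator of the retained lattice $\Lambda_0$ already divided out in $Q_m$. The deck generator of $\Xi_m$ is the row shift $\Psi_{(1,0)}$. On ordinary fibres it translates by $\Omega_a\widehat\gamma=(6\mu,\beta_0+a)^t$, so it moves the base point along $C_\nu$ as well as rescaling $A$. Orbit base points can therefore accumulate on $C_\nu$ while $A$ escapes to $0$ or $\infty$. A fibrewise ``multiplicative $A$-orbit'' picture cannot give discreteness; the compactness argument above does.

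\textbf{Step~1.} This step should be discarded. Removing $\{u=u_k\}$ or $\{M=c_k\}$ from $T$ produces, after a change of torus coordinates, a factor $\C^*$ minus finitely many points, which is hyperbolic. No nonempty open subset of such a product admits a dominating map from $\C^3$, since its composition with that factor would be constant. So none is Oka, and analytic extension cannot start there. Your alternative, extension from $(\C^*)^3$ minus hypersurfaces, has the same defect.

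No separate treatment of $F$ is needed. $F\cap V$ is finite, and its preimage is finitely many lattice cosets absorbed into $\widetilde E$. With these corrections, $\C^3\setminus\widetilde E\to V\setminus E$ is an unramified covering, and your Step~3 finishes the proof as in the paper.
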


\begin{proof}
Use the product open $V$ from \eqref{eq:band-product-open} and write
$E=(D\cup F)\cap V$. A full deck orbit is closed and discrete in $Q_m$,
so $E$ is closed and discrete in $V$. Its complete inverse image
$\widetilde E\subset\C^3_{x,y,\sigma}$ under
\[
 (x,y,\sigma)\longmapsto(u,\eta,A)
       =(\e^x,\e^y,\e^\sigma)
\]
is also closed and discrete. Apply \cref{lem:proper-character} to the
components of $C_\nu\cap T$ and choose the resulting nonconstant
character $M=u^r\eta^s$, with $(r,s)\ne(0,0)$. If there are no such
components, take $(r,s)=(1,0)$. The projection
\[
 (x,y,\sigma)\longmapsto(rx+sy,\sigma)
\]
maps $\widetilde E$ to a closed discrete subset of $\C^2$. Indeed,
bounded projected logarithms keep $M$ and the scalar coordinate $A$
in compact annuli. Properness of $M$ on the carrier curves and the
finiteness of $F$ confine the base points of $E$ to a compact subset
of $T$. The corresponding
points of $E$ therefore lie in a compact subset of
$V=T\times\C^*$, so only finitely many occur. The projected logarithms
of each point lie in a product of two discrete lattices.

The preceding projection discreteness and the Rosay--Rudin projection
criterion \cite[Theorem~3.9]{RosayRudin} make $\widetilde E$ tame, with the
empty case understood trivially.

The tame-complement theorem
\cite[Proposition~2.24]{ForstnericSurvey} makes
$\C^3\setminus\widetilde E$ Oka. Since $\widetilde E$ is the complete
inverse image of $E$, the exponential map restricts to an unramified
holomorphic covering
$\C^3\setminus\widetilde E\to V\setminus E$. Thus $V\setminus E$ is Oka by
\cite[Proposition~2.8]{ForstnericSurvey}. This common analytic-Zariski open
subset lies in both manifolds displayed in the statement. Applying analytic
extension to each one \cite[Theorem~1.9]{AnDuGuoWangXieLAP} completes the
proof.
\end{proof}

\begin{theorem}\label{thm:semistable-finite-punctures}
For $m=3,4$ and every finite subset $R\subset M_m$, the manifold
$M_m\setminus R$ is Oka.
\end{theorem}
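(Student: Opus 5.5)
The plan is to follow the same covering-and-extension route as in \cref{nm:prop:one-fibre-oka}, with \cref{prop:punctured-band} playing the role of the product open $V$. Let $q_m:Q_m\to M_m$ be the $\Xi_m$-covering of \cref{nm:lem:unwrap}, and let $\widehat R=q_m^{-1}(R)$ be the complete inverse image of the finite set $R$. Then $\widehat R$ is a finite union of full $\Xi_m$-orbits, hence closed and discrete in $Q_m$. The restriction
\[
 q_m:Q_m\setminus\widehat R\longrightarrow M_m\setminus R
\]
is again an unramified holomorphic covering. By invariance of the Oka property under holomorphic coverings \cite[Proposition~2.8]{ForstnericSurvey}, it therefore suffices to show that $Q_m\setminus\widehat R$ is Oka.

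Next, fix one two-row band $Y\simeq Y_m$ in $Q_m$; by \cref{nm:lem:actual-two-row-band} it is analytic-Zariski open. Apply the decomposition \eqref{eq:core-toroidal-decomposition}:
\[
 \widehat R\cap Y=D_Y\sqcup F_Y,
\]
where $D_Y\subset\mathcal K_m$ is a finite union of full deck orbits and $F_Y$ is finite. Under the identification $Y\simeq Y_m$, these are exactly the hypotheses of \cref{prop:punctured-band}, so $Y\setminus\widehat R=Y\setminus(D_Y\cup F_Y)$ is Oka.

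It remains to check that $Y\setminus\widehat R$ is analytic-Zariski open in the manifold $Q_m\setminus\widehat R$. Its complement there is $(Q_m\setminus Y)\setminus\widehat R$. This is the trace of the closed analytic set $Q_m\setminus Y$ on $Q_m\setminus\widehat R$, hence closed analytic in it. The open set is nonempty, since it contains the complement of a discrete set in $\mathcal K_m$. Analytic extension \cite[Theorem~1.9]{AnDuGuoWangXieLAP} now makes $Q_m\setminus\widehat R$ Oka, and the covering step finishes the proof.

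The main point requiring care is verifying the hypotheses of \cref{prop:punctured-band}. First, the part of $\widehat R$ meeting the band along the common core must consist of complete $\Xi_m$-orbits lying in $\mathcal K_m$. Second, only finitely many toroidal points can occur. The justification given after \eqref{eq:core-toroidal-decomposition} handles both: core orbits lie entirely in every translated band, and the deck generator shifts the row index by one, so each toroidal orbit meets the two retained rows in at most two points. Since $R$ is finite, this yields finitely many orbits of each kind. I would also confirm that the chosen band's identification with $Y_m$ is $\Xi_m$-compatible on $\mathcal K_m$, so that full orbits correspond to full orbits in \cref{prop:punctured-band}.
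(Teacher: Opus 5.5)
Your proposal is correct and matches the paper's proof step for step. You lift $R$ to its complete preimage $\widehat R\subset Q_m$ and use \eqref{eq:core-toroidal-decomposition} with \cref{prop:punctured-band} to make $Y\setminus\widehat R$ Oka; analytic extension then gives $Q_m\setminus\widehat R$, and \cite[Proposition~2.8]{ForstnericSurvey} descends the result along the unramified covering, while your extra checks (closed discreteness of $\widehat R$, analytic-Zariski openness and nonemptiness of $Y\setminus\widehat R$) only make explicit what the paper states in one line.
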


\begin{proof}
Let $\widehat R$ be the complete inverse image in $Q_m$ and choose one
two-row band $Y\simeq Y_m$.  The decomposition
\eqref{eq:core-toroidal-decomposition} and
\cref{prop:punctured-band} show that $Y\setminus\widehat R$ is Oka.
It is analytic-Zariski open in $Q_m\setminus\widehat R$, so analytic
extension makes the latter Oka. The restriction
$Q_m\setminus\widehat R\to M_m\setminus R$ is an unramified holomorphic
covering, so \cite[Proposition~2.8]{ForstnericSurvey} gives the conclusion.
\end{proof}

\begin{proof}[Proof of \cref{thm:intro-point-modifications}]
Let $P\subset X$ be finite and take $U_1=X\setminus S_{1,a}$.  The degree
four cover $M_4\to U_1$ pulls $P\cap U_1$ back to a finite set $R$.
By \cref{thm:semistable-finite-punctures}, $M_4\setminus R$ is Oka.
The restricted degree-four covering $M_4\setminus R\to U_1\setminus P$
therefore makes $U_1\setminus P$ Oka by
\cite[Proposition~2.8]{ForstnericSurvey}. This is analytic-Zariski open in
$X\setminus P$, so \cite[Theorem~1.9]{AnDuGuoWangXieLAP} proves that
$X\setminus P$ is Oka.

Let $E$ be the exceptional divisor of $\Bl_PX\to X$.  Its complement is
biholomorphic to $X\setminus P$ and is analytic-Zariski open in the
blow-up.  A final application of
\cite[Theorem~1.9]{AnDuGuoWangXieLAP} proves that $\Bl_PX$ is Oka.
\end{proof}

The same principal open also gives complements of curves in the band.
Let $C_1,\ldots,C_m\subset Y_m$ be the exceptional curves of the small
resolution.  The closed analytic curve
$Z_m^0=\{A=B=0\}\subset H_m$ has complete inverse image
\[
 \mathfrak A_m=\varrho_m^{-1}(Z_m^0),
\]
which contains the exceptional curves and the proper transforms of the
remaining zero-axis components.

\begin{theorem}\label{thm:partial-exceptional}
Suppose $D,F$ satisfy the hypotheses of \cref{prop:punctured-band} and
$I\subset\{1,\ldots,m\}$. Then
\[
 Y_m\setminus\left(D\cup F\cup\bigcup_{i\in I}C_i\right)
\]
is Oka.
\end{theorem}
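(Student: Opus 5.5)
The plan is to repeat the proof of \cref{prop:punctured-band}, using the fact that the principal open $Y_{m,A}$ misses every exceptional curve. The curves $C_i$ are the fibres of $\varrho_m$ over the conifold points, and these lie in $\{A=B=0\}$. Hence $C_i\cap Y_{m,A}=\varnothing$ for every $i$. The product open $V\simeq(\C^*)^3_{u,\eta,A}$ of \eqref{eq:band-product-open} therefore lies in
\[
 Y_m^{I}:=Y_m\setminus\Bigl(D\cup F\cup\bigcup_{i\in I}C_i\Bigr),
\]
apart from the points of $E=(D\cup F)\cap V$.

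The steps, in order, are as follows.

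\begin{enumerate}
\item Show that $V\setminus E$ is Oka. This is exactly the argument of \cref{prop:punctured-band}: choose a proper monomial character by \cref{lem:proper-character}, lift $E$ under the exponential covering to a tame discrete set $\widetilde E\subset\C^3$ via the Rosay--Rudin criterion, and conclude with the tame-complement theorem and covering invariance \cite[Proposition~2.8]{ForstnericSurvey}.
\item Check that $V\setminus E$ is analytic-Zariski open in $Y_m^{I}$. Its complement there is the restriction of $\varrho_m^{-1}(H_m\cap\{A=0\})$, together with the preimage of the algebraic complement of $T$ in $E_m$, intersected with $Y_m^I$.
\item Check that $Y_m^I$ is a connected complex manifold. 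The set $\bigcup_{i\in I}C_i$ is a finite union of compact curves, hence closed analytic. The sets $D$ and $F$ are closed and discrete, as shown in the proof of \cref{prop:punctured-band}. Removing a subset of complex codimension at least two does not disconnect the connected manifold $Y_m$.
\item Apply the analytic extension theorem \cite[Theorem~1.9]{AnDuGuoWangXieLAP} to the nonempty analytic-Zariski open Oka subset $V\setminus E$ of $Y_m^I$. This gives the Oka property of $Y_m^I$.
\end{enumerate}

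The main obstacle I expect is step (2), namely verifying that the complement of $V\setminus E$ in $Y_m^I$ is a closed analytic subset of $Y_m^I$ itself rather than merely of $Y_m$. This holds because intersecting a closed analytic subset of $Y_m$ with the open set $Y_m^I$ stays closed analytic. The only other point to record is that deleting the curves $C_i$ does not interfere with the tameness construction, since all of that construction takes place inside $V$, which is disjoint from every $C_i$.
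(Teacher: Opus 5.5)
Your argument is correct and is essentially the paper's: the paper also uses that the principal open misses every exceptional curve and applies \cite[Theorem~1.9]{AnDuGuoWangXieLAP} once, but its Oka analytic-Zariski open subset is the manifold $Y_{m,A}\setminus(D\cup F)$ already obtained in \cref{prop:punctured-band}, not $V\setminus E$ rederived via the tame-set argument. Your step (1) is therefore redundant but harmless, and steps (2)--(3) are the routine closedness and connectedness checks that the paper leaves implicit.
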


\begin{proof}
The Oka manifold $Y_{m,A}\setminus(D\cup F)$ from
\cref{prop:punctured-band} is disjoint from all exceptional curves and is
analytic-Zariski open in the displayed complement.  Apply
\cite[Theorem~1.9]{AnDuGuoWangXieLAP}.
\end{proof}

In particular, deleting any subcollection of the exceptional curves from
$Y_m$ preserves the Oka property.

\begin{theorem}\label{thm:axis-deletion}
The manifold $Y_m\setminus\mathfrak A_m$ is Oka.
\end{theorem}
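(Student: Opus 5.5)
We prove that $Y_m\setminus\mathfrak A_m$ is Oka by exhibiting a large Oka open subset and applying analytic extension \cite[Theorem~1.9]{AnDuGuoWangXieLAP}, as in \cref{thm:partial-exceptional}.

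The locus $\mathfrak A_m$ is the inverse image of $\{A=B=0\}$. Hence $Y_m\setminus\mathfrak A_m$ is the union of the two principal opens $\varrho_m^{-1}(H_m\cap\{A\ne0\})=Y_{m,A}$ and $\varrho_m^{-1}(H_m\cap\{B\ne0\})=:Y_{m,B}$. Both lie away from the conifold points, so $\varrho_m$ is an isomorphism over them. Thus $Y_{m,A}$ is contained in $Y_m\setminus\mathfrak A_m$ and disjoint from every exceptional curve.

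By \cref{nm:prop:one-fibre-oka}, $Y_{m,A}$ is Oka. To apply analytic extension, we check that $Y_{m,A}$ is analytic-Zariski open in $Y_m\setminus\mathfrak A_m$. Its complement there is
\[
 \varrho_m^{-1}(\{A=0,\ B\ne0\}),
\]
the zero locus of the holomorphic function $A$, which is a global section of the pullback of $L_m$. Being the zero set of a section of a line bundle, it is a closed analytic subset. So $Y_{m,A}$ is a nonempty analytic-Zariski open subset of the connected manifold $Y_m\setminus\mathfrak A_m$, and \cite[Theorem~1.9]{AnDuGuoWangXieLAP} makes $Y_m\setminus\mathfrak A_m$ Oka.

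Two points need care. The first is connectedness of $Y_m\setminus\mathfrak A_m$, which the analytic extension theorem presumably requires. It holds because $\mathfrak A_m$ has complex codimension at least one in the connected manifold $Y_m$, and the complement of a proper analytic subset of a connected manifold is connected. The second, which is the main point to verify, is that $\mathfrak A_m$ is closed analytic in $Y_m$. This follows because it is the preimage of the analytic set $\{A=B=0\}$ under the holomorphic map $\varrho_m$. Alternatively, one could use symmetry: $Y_{m,B}\simeq (L_m^{-1})^\times$ is Oka by the same argument, and either open suffices.
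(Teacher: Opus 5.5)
Your proposal is correct and follows the paper's proof: $Y_{m,A}$ is Oka by \cref{nm:prop:one-fibre-oka}, it is analytic-Zariski open in $Y_m\setminus\mathfrak A_m$, and \cite[Theorem~1.9]{AnDuGuoWangXieLAP} finishes the argument. Your additional checks make explicit what the paper leaves implicit: the complement $\varrho_m^{-1}(\{A=0,\ B\ne0\})$ is the zero set of the pulled-back section $A$, and $Y_m\setminus\mathfrak A_m$ is connected because $\mathfrak A_m$ is a curve in a connected threefold.
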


\begin{proof}
The principal conic open $Y_{m,A}$ in
\eqref{nm:eq:principal-conic-open} is Oka and analytic-Zariski open in
$Y_m\setminus\mathfrak A_m$.  Apply
\cite[Theorem~1.9]{AnDuGuoWangXieLAP}.
\end{proof}

\begin{corollary}\label{cor:exotic-projective-threefold}
For every $a\in\cA$ and $p\in X_a$, the blow-up
$\Bl_pX_a$ is an Oka complex manifold diffeomorphic to $\PP^3$, but it is
not biholomorphic to the standard projective threefold.
\end{corollary}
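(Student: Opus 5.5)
The proof has three parts: the Oka property, the diffeomorphism type, and the failure of biholomorphism. The Oka assertion is the case $P=\{p\}$ of \cref{thm:intro-point-modifications}, so only the topological statement and the Chern-class obstruction need an argument.

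\emph{Diffeomorphism type.} By \cref{cor:sphere-type} there is a diffeomorphism $X_a\cong S^6$. The smooth blow-up of a point depends only on an oriented chart, and its diffeomorphism type is independent of the chart up to isotopy. Blowing up a point in a complex manifold gives, as an oriented smooth manifold, the connected sum with $\overline{\C\PP^3}$, where the orientation is reversed relative to the complex one. Hence
\[
\Bl_pX_a\cong S^6\#\overline{\PP^3}\cong\overline{\PP^3}.
\]
Complex conjugation on $\PP^3$ is an orientation-reversing diffeomorphism in complex dimension three, so $\overline{\PP^3}$ is diffeomorphic to $\PP^3$. This gives the diffeomorphism claim.

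\emph{Chern class.} First I will show $c_1(X_a)=0$. Since $H^2(S^6;\Z)=0$, every integral degree-two class on $X_a$ vanishes. Next I will use the blow-up formula
\[
c_1(\Bl_pX_a)=\beta^*c_1(X_a)-2E=-2E,
\]
where $E$ is the exceptional divisor and $2=\dim_\C X_a-1$. The class $E$ generates $H^2(\Bl_pX_a;\Z)\cong\Z$. Indeed, the Mayer--Vietoris sequence for the connected sum identifies $H^2$ with $H^2(\overline{\PP^3})$, and $E$ restricted to $E\cong\PP^2$ is $\mathcal O(-1)$, which is a generator. So $c_1$ is $\pm2$ times a generator and has divisibility $2$.

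For the standard $\PP^3$, $c_1=4H$ has divisibility $4$. A biholomorphism $\phi:\Bl_pX_a\to\PP^3$ would satisfy $\phi^*c_1(\PP^3)=c_1(\Bl_pX_a)$, and it induces an isomorphism on $H^2(\,\cdot\,;\Z)$, which preserves divisibility. Since $2\ne4$, no such $\phi$ exists.

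The only delicate point is the generator claim for $E$, which requires care with the orientation of the connected sum. I would settle it by computing $E^3$, or equivalently by checking that the restriction $H^2(\Bl_pX_a)\to H^2(E)$ is an isomorphism, which holds because the complement of a tubular neighbourhood of $E$ is a $6$-ball with $H^2=0$. Everything else is standard bookkeeping.
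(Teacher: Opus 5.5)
Your proposal is correct and follows the paper's proof. The Oka property comes from \cref{thm:intro-point-modifications}, the diffeomorphism $\Bl_pX_a\cong\PP^3$ from $X_a\cong S^6$, and the obstruction is $c_1(\Bl_pX_a)=-2[E]$ (divisibility $2$) versus $c_1(\PP^3)=4h$ (divisibility $4$). The only difference is that you justify the diffeomorphism via $S^6\#\overline{\PP^3}$ and complex conjugation, and the generator claim for $[E]$ via Mayer--Vietoris, where the paper cites Huckleberry--Kebekus--Peternell and simply asserts that $[E]$ generates $H^2(\Bl_pX_a,\mathbf Z)$.
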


\begin{proof}
The Oka assertion follows from
\cref{thm:intro-point-modifications}.  Since $X_a$ is diffeomorphic to
$S^6$, its smooth point blow-up is diffeomorphic to $\PP^3$
\cite{HuckleberryKebekusPeternell}.  If $E\simeq\PP^2$ is the exceptional
divisor, the canonical-bundle formula gives
\[
 K_{\Bl_pX_a}\simeq\pi^*K_{X_a}\otimes\mathcal O_{\Bl_pX_a}(2E).
\]
Alp\"oge's smooth-type theorem gives $H^2(X_a,\Z)=0$
\cite[Theorem~8.1]{AlpogeS6}, while $[E]$ generates
$H^2(\Bl_pX_a,\Z)\simeq\Z$.  Thus
$c_1(T_{\Bl_pX_a})=-2[E]$ has divisibility $2$, whereas
$c_1(T_{\PP^3})=4h$ has divisibility $4$.  Hence the two complex
structures are not biholomorphic.
\end{proof}

\begin{corollary}\label{cor:smooth-curve-oka1}
If $C\subset X_a$ is a smooth compact complex curve, possibly
disconnected, then $\operatorname{Bl}_C X_a$ is Oka--$1$.  The same holds
after any finite sequence of blow-ups along smooth centres.
\end{corollary}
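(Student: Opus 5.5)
The plan is to reduce Oka--$1$ of $\operatorname{Bl}_C X_a$ to known facts about $X_a$ itself, using the Forstneri\v c--L\'arusson blow-up theorem for the Oka--$1$ property cited in the introduction. By \cref{thm:intro-point-modifications} (or already \cref{nm:thm:oka-local}), $X_a$ is Oka, hence in particular Oka--$1$; indeed the Oka property specialized to open Riemann surfaces as sources gives Runge approximation and jet interpolation, which is \cite[Definition~1.1]{ForstnericLarussonOka1}. The Forstneri\v c--L\'arusson theorem asserts that the blow-up of an Oka--$1$ manifold along a closed complex submanifold (in particular a smooth compact curve, or a finite union of such) is again Oka--$1$. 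First I will check the hypotheses: $C$ is a compact, hence closed, complex submanifold of $X_a$, possibly disconnected with finitely many components because it is compact. Blowing up a disjoint union of smooth centres is the same as the simultaneous blow-up, which is the form treated there, so no separate treatment of components is needed.

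For the second sentence I will argue by induction on the length of the sequence. Suppose $X^{(k)}$ is obtained from $X_a$ by $k$ blow-ups along smooth centres and is Oka--$1$. A smooth centre $Z\subset X^{(k)}$ is a closed complex submanifold, and $X^{(k)}$ is again a compact complex manifold (in particular second countable, connected), so the same blow-up theorem applies to give $\operatorname{Bl}_Z X^{(k)}$ Oka--$1$. The base case $k=0$ is the Oka property of $X_a$. If the cited theorem is stated only for centres of specific dimension, I would note that point centres are also handled directly: the complement of the exceptional divisor is $X^{(k)}\setminus Z$, and one may alternatively argue via the fact that Oka--$1$ is preserved under removing closed subvarieties of codimension at least two and by analytic-Zariski-dense extension for Oka--$1$; but I expect the cited theorem to cover arbitrary smooth centres.

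The main obstacle is purely bookkeeping: making sure the precise hypotheses of the Forstneri\v c--L\'arusson blow-up result (closedness of the centre, connectedness or not of the centre, no restriction on codimension beyond $\ge 2$ perhaps) are met. A smooth curve in a threefold has codimension two, so if the theorem requires codimension at least two this is satisfied for curves and points; for divisorial smooth centres the blow-up is an isomorphism, so the statement is trivial there. I would therefore phrase the inductive step as: if $\operatorname{codim}Z=1$ nothing changes, otherwise invoke the blow-up theorem.
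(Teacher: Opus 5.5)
Your proposal is correct and is the paper's argument: $X_a$ is Oka by \cref{nm:prop:one-fibre-oka}, hence Oka--$1$, and the Forstneri\v{c}--L\'arusson blow-up invariance of the Oka--$1$ property, applied to $C$ and then inductively along the sequence of smooth centres, gives both assertions. The fallback you sketch via complements of small subvarieties is not needed and can be dropped.
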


\begin{proof}
This follows from \cref{nm:prop:one-fibre-oka} and the blow-up invariance of
the Oka--$1$ property
\cite[Theorem~3.1]{ForstnericLarussonOka1}.
\end{proof}

\begin{proposition}\label{prop:local-conifold-curve}
For $R=\operatorname{Tot}(\cO_{\Pone}(-1)\oplus\cO_{\Pone}(-1))$ with zero
section $C\simeq\Pone$, one has
\[
 \operatorname{Bl}_C R\simeq
 \operatorname{Tot}(\cO_{\Pone\times\Pone}(-1,-1)),
\]
and this smooth algebraic threefold is algebraically elliptic and Oka.
\end{proposition}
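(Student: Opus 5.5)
The plan is to verify the biholomorphism by an explicit chart computation and then deduce ellipticity from a toric description. Write $R$ as the quotient
\[
 R=\bigl((\C^2\setminus\{0\})\times\C^2\bigr)/\C^*,
\]
where $\lambda\cdot(s,t,x,y)=(\lambda s,\lambda t,\lambda^{-1}x,\lambda^{-1}y)$. In this description the zero section $C$ is $\{x=y=0\}$ and the invariant functions $sx,sy,tx,ty$ map $R$ onto the quadric cone $\{ab=cd\}\subset\C^4$. Blowing up along $C$ replaces the normal direction $(x,y)$ by a point $[x:y]\in\Pone$. The total space of $\cO_{\Pone\times\Pone}(-1,-1)$ has points $([s:t],[x:y],\ell)$, with $\ell$ in the line spanned by $(s,t)\otimes(x,y)$. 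I will write down the map $\operatorname{Bl}_CR\to\operatorname{Tot}(\cO(-1,-1))$ sending a point to its two projective classes together with the tensor $(s,t)\otimes(x,y)$. This tensor is $\C^*$-invariant, so the map is well defined.

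Next I will check, on the four standard affine charts $s\neq0$ or $t\neq0$ crossed with $x\neq0$ or $y\neq0$, that this map is a biholomorphism. Each chart of the blow-up is $\C^3$: for example, on $s=1,\ x\neq0$ the coordinates are $(t,x,y/x)$. The corresponding chart of the line bundle is also $\C^3$, with coordinates $(t,\,y/x,\,\text{fibre }x)$, so the map is the identity in these coordinates. The same happens on the other three charts, and the transition functions match. The exceptional divisor goes to the zero section $\Pone\times\Pone$. Equivalently, both spaces are the smooth toric threefold with rays $e_1,e_2,e_3,\,e_1+e_2-e_3$ together with the star subdivision by the ray $e_1+e_2$ (or a similar one), so I can cite Fulton for the identification instead.

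For algebraic ellipticity I have two routes. The first is to note that the variety is a smooth toric variety without torus factors, or simply a smooth toric variety, and then to use the known result that smooth toric varieties (complete or quasi-projective) are algebraically elliptic. I would cite Forstneri\v c's book for this, which gives spanning sprays built from root subgroups. The second, more self-contained route is to observe that $\operatorname{Tot}(\cO(-1,-1))$ is covered by Zariski open charts $\cong\C^3$ and satisfies the curve-cofinite condition of Gromov's localization. Concretely, it is the blow-up of $\C^4$ at the origin restricted to the cone $\{ab=cd\}$; more directly, it is the complement in $\PP(\cO\oplus\cO(-1,-1))$ of a section, and I would argue that it is a union of affine spaces and use Gromov's localization lemma for algebraic ellipticity. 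Once algebraic ellipticity is established, Gromov's theorem gives that the variety is Oka.

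The main obstacle is the ellipticity step. I expect the cleanest way through is the explicit toric model: $\C^3$ charts covering the variety, together with complete algebraic vector fields (fibre scaling by translation, and lifts of $\mathfrak{sl}_2\times\mathfrak{sl}_2$ acting on the $\Pone\times\Pone$ base) whose flows span the tangent space at every point. Spanning by complete vector fields implies algebraic flexibility, and hence ellipticity. I would check the spanning condition on the zero section separately.
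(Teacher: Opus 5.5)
Your isomorphism argument is correct. The quotient description and chart check are an explicit version of the paper's one-line argument: the blow-up of a vector bundle along its zero section is the tautological line bundle on its projectivization, and $\mathbb P(\cO(-1)\otimes\C^2)=\Pone\times\Pone$ with tautological bundle $\cO(-1,-1)$. Your second ellipticity route is exactly the paper's proof:
\begin{itemize}
\item four Zariski opens $\cong\mathbf A^3$, the complements of the pulled-back coordinate divisors;
\item Gromov's localization lemma;
\item Gromov's theorem, which then gives the Oka property.
\end{itemize}
The relevant hypothesis is just the cover by affine spaces; there is no ``curve-cofinite'' condition. Localization by itself yields algebraic subellipticity. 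The upgrade to algebraic ellipticity, as the statement requires, is the Kaliman--Zaidenberg result that the paper's citation supplies.

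The route you call cleanest, however, cannot work. $X=\operatorname{Tot}(\cO(-1,-1))$ is the blow-up of the Stein cone $\{ab=cd\}\subset\C^4$ at its vertex. Hence every compact connected positive-dimensional analytic subset of $X$ lies in the zero section $Z\cong\Pone\times\Pone$. Consequently every holomorphic automorphism of $X$ maps $Z$ onto itself. Every complete holomorphic vector field, algebraic or not, is then tangent to $Z$, so at points of $Z$ the normal direction is never spanned by such fields. Your planned ``separate check on the zero section'' would therefore fail, and $X$ is not flexible. Two further points make the same route unworkable:
\begin{itemize}
\item Fibre scaling is a $\mathbb G_m$-flow rather than a $\mathbb G_a$-action, so it does not produce algebraic sprays.
\item There is no global fibre translation, since $\cO(-1,-1)$ has no nonzero sections.
\end{itemize}

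Your first route rests on a claim that is false as stated. $(\C^*)^n$ is a smooth quasi-projective toric variety, yet it is not algebraically elliptic, because polynomial maps $\C^r\to\C^*$ are constant. The correct toric statement needs every maximal cone to be smooth and full-dimensional, so that the variety is covered by affine spaces, and that just reduces to the localization argument again. The ellipticity here genuinely comes from localized, non-complete sprays built from translations on the $\mathbf A^3$ charts, which do move points of $Z$ off $Z$. Keep the second route and discard the other two.
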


\begin{proof}
The blow-up of a vector bundle along its zero section is the tautological
line bundle on its projectivization.  Here
$\mathbb P(\cO(-1)\otimes\C^2)=\Pone\times\Pone$, giving the displayed
isomorphism. The line bundle $\cO_{\Pone\times\Pone}(-1,-1)$ is trivial
over each of the four standard affine charts $\mathbf A^2$ of
$\Pone\times\Pone$. Its total space is therefore covered by four Zariski
opens isomorphic to $\mathbf A^3$, each algebraically elliptic. Gromov's
algebraic localization lemma
\cite[Corollary~2.2]{KalimanZaidenbergCubic} makes this total space
algebraically elliptic. Gromov's theorem gives the Oka property
\cite{GromovOka}.
\end{proof}


\appendix

\section{Gluing and choice-independence}
\label{app:deferred-verifications}

This appendix supplies the two global compatibility arguments deferred from
\cref{sec:relative-compactification}.

\begin{proof}[Proof of \cref{lem:hausdorff-gluing}]
Write
\[
 C=\mathcal J_\Delta,\qquad N_j=\mathcal N_{j,\Delta}.
\]
Let
\[
 O_j^C=C|_{D_j^*\times\Delta},
 \qquad
 O_j^N=N_j|_{D_j^*\times\Delta}.
\]
For uniform notation, let
$\phi_j:O_j^C\to O_j^N$ be $G_0$ when $j=0$ and $G_j^{-1}$ when
$j=1,2$.  The gluing relation is generated by
\[
 x\sim\phi_j(x)\qquad(x\in O_j^C).
\]
Because the discs $D_0,D_1,D_2$ are pairwise disjoint, the three open sets
$O_j^C$ are pairwise disjoint.  No point of one filling piece is identified
with a point of another filling piece.  It follows directly that every
equivalence class is either a singleton or a two-point set
\[
 \{x,\phi_j(x)\}
\]
for a unique index $j$ and a unique $x\in O_j^C$.  In particular, an
equivalence class contains at most one point from each individual piece.
Hence the canonical map from every piece to the glued quotient is injective.

The image of each piece is open.  For example, the saturation of the core is
\[
 C\ \sqcup\ O_0^N\ \sqcup\ O_1^N\ \sqcup\ O_2^N,
\]
which is open in the disjoint union, and the saturation of $N_j$ is
$N_j\sqcup O_j^C$.  The canonical injection of a piece is therefore a
homeomorphism onto an open subset.  Since the glued space is covered by four
open subsets homeomorphic to second-countable spaces, the union of four
countable bases is a countable basis for the glued space.  The complex charts
on the pieces have holomorphic transition functions because the only
cross-piece transitions are the biholomorphisms $\phi_j$ and
$\phi_j^{-1}$.  Thus they determine a unique complex-manifold structure once
Hausdorffness has been proved.

Let $x\ne y$ be points of the glued space.  The local maps to
$B\times\Delta$ agree on the overlaps, so they determine a continuous map
$\mathfrak f$.  If $\mathfrak f(x)\ne\mathfrak f(y)$, choose disjoint
neighbourhoods of their images in the Hausdorff base and take inverse images.

Suppose
$\mathfrak f(x)=\mathfrak f(y)=(b,a)$.  If $b\in D_j$, represent both points
in the filling piece $N_j$; when $b\in D_j^*$ this is possible using
$\phi_j$.  If $b$ lies outside the three filling discs, represent both points
in the core $C$.  Thus there is one Hausdorff piece $M$ containing distinct
representatives $\widetilde x,\widetilde y$ of $x,y$.

Choose disjoint open neighbourhoods $U,V\subset M$ of
$\widetilde x,\widetilde y$.  Their saturations in the disjoint union are
open.  Explicitly, if $M=C$, then
\[
 \operatorname{Sat}(U)
 =
 U\ \sqcup\ \bigsqcup_j\phi_j(U\cap O_j^C),
\]
and if $M=N_j$, then
\[
 \operatorname{Sat}(U)
 =
 U\ \sqcup\ \phi_j^{-1}(U\cap O_j^N);
\]
the same formulas hold for $V$.  These two saturations are disjoint.  Indeed,
an intersection point would determine one equivalence class meeting both
$U$ and $V$.  That class would then contain two representatives in the same
piece $M$, contrary to the one-representative property proved above and the
disjointness of $U,V$.  The quotient images of the two open saturations are
therefore disjoint open neighbourhoods of $x$ and $y$.  Hence the glued space
is Hausdorff.
\end{proof}
The remaining compatibility concerns the cusp cutoff and the parameter
disc used in the relative construction.

\begin{proof}[Proof of \cref{prop:choice-independence}]
We first fix $\Delta$ and compare two radii
$0<\varepsilon'<\varepsilon$.  For every valid cusp cutoff $\rho$ for $\Delta$, write
\[
 \mathcal N_{0,\Delta}^{(\rho)}
   =(Y_\rho\times\Delta)/\Z^2,
 \qquad
 \mathfrak X_\Delta^{(\rho)}
\]
for the cusp quotient and the resulting global compactification, and write
\[
 G_0^{(\rho)}:
 \mathcal J_\Delta|_{D_\rho^*\times\Delta}
 \xrightarrow{\ \cong\ }
 \mathcal N_{0,\Delta}^{(\rho)}|_{D_\rho^*\times\Delta}
\]
for the punctured cusp biholomorphism constructed in
\cref{prop:cusp-gluing}.  All these objects are obtained by restricting the
same action formula and the same exponential map.
The core $\mathcal J_\Delta$ is dense in each compactification:
the omitted fibres are locally given by $s_j^{m_j}=0$ or
$z_0z_1z_2=0$. Since the compactifications are Hausdorff by
\cref{lem:hausdorff-gluing}, two continuous maps to the same compactification
that agree on the core agree everywhere. In particular, a comparison map
restricting to the identity on the core is unique.

The subset $Y_{\varepsilon'}\times\Delta$ is open and invariant in
$Y_\varepsilon\times\Delta$. The quotient map is open, and Lee's local
quotient charts \cite[Theorem~1.16]{LeeComplexManifolds} identify its quotient
with an open subset of the larger quotient. Hence there is an open
biholomorphic embedding
\begin{equation}
\label{eq:cutoff-open-embedding}
 j_{\varepsilon',\varepsilon}:
 \mathcal N_{0,\Delta}^{(\varepsilon')}
 \hookrightarrow
 \mathcal N_{0,\Delta}^{(\varepsilon)}.
\end{equation}
Its image is the inverse image of
$D_{\varepsilon'}\times\Delta$.

Let $W_\Delta^{(\varepsilon)}$ be the central divisor in the larger cusp
quotient.  The two open subsets
\[
 j_{\varepsilon',\varepsilon}
   (\mathcal N_{0,\Delta}^{(\varepsilon')})
 \quad\text{and}\quad
 \mathcal N_{0,\Delta}^{(\varepsilon)}
   \setminus W_\Delta^{(\varepsilon)}
\]
cover $\mathcal N_{0,\Delta}^{(\varepsilon)}$.  On the first one, the
map to $\mathfrak X_\Delta^{(\varepsilon')}$ is the inverse of
\eqref{eq:cutoff-open-embedding}, followed by the inclusion of the smaller
cusp filling.  On the second one, the map is given by
\[
 \mathcal N_{0,\Delta}^{(\varepsilon)}\setminus W_\Delta^{(\varepsilon)}
 \xrightarrow{\ (G_0^{(\varepsilon)})^{-1}\ }
 \mathcal J_\Delta|_{D_\varepsilon^*\times\Delta}
 \longrightarrow
 \mathfrak X_\Delta^{(\varepsilon')},
\]
where the last arrow is the inclusion of the common core.

These two definitions agree on their overlap.  Indeed, both punctured cusp
maps are induced by the same exponential formula \eqref{eq:Eexp}; restricting
that formula from $D_\varepsilon^*$ to $D_{\varepsilon'}^*$ gives
\begin{equation}
\label{eq:G0-radius-compatibility}
 G_0^{(\varepsilon)}|_{D_{\varepsilon'}^*\times\Delta}
 =j_{\varepsilon',\varepsilon}
   \circ G_0^{(\varepsilon')}.
\end{equation}
In the smaller glued space, a core point and its image under
$G_0^{(\varepsilon')}$ represent the same point.  The two local maps
therefore glue to a holomorphic map from the larger cusp filling to the
smaller global compactification.

On the common core and the two finite fillings take the identity.  On every
cusp overlap this identity agrees with the map just constructed, again by
\eqref{eq:G0-radius-compatibility}.  The gluing property therefore gives a
holomorphic map
\[
 \Theta_{\varepsilon,\varepsilon'}:
 \mathfrak X_\Delta^{(\varepsilon)}
 \longrightarrow
 \mathfrak X_\Delta^{(\varepsilon')}
\]
over $B\times\Delta$.  Reversing the roles of the two compactifications gives
an inverse: on the smaller cusp filling it is the open embedding
\eqref{eq:cutoff-open-embedding}, and on the core and finite pieces it is the
identity.  The two composites are the identity on every member of the open
gluing cover.  Hence $\Theta_{\varepsilon,\varepsilon'}$ is a
biholomorphism.  Uniqueness gives, for
$\varepsilon''<\varepsilon'<\varepsilon$,
\[
 \Theta_{\varepsilon',\varepsilon''}
 \circ\Theta_{\varepsilon,\varepsilon'}
 =\Theta_{\varepsilon,\varepsilon''}.
\]
This proves part~\textup{(i)}.

Now consider two parameter discs $\Delta_1,\Delta_2$ and a point
$a_0\in\Delta_1\cap\Delta_2$. Choose a disc
$V\Subset\Delta_1\cap\Delta_2$ containing $a_0$.  Let the two original
valid cusp cutoffs be $\eta_1$ and $\eta_2$.  Restricting from
$\Delta_i$ to $V$ can only decrease the supremum in condition~\textup{(V2)},
so each $\eta_i$ remains valid for $V$ with its original auxiliary cutoff.
Choose
\[
 0<\rho<\min\{\eta_1,\eta_2\}.
\]
By the monotonicity assertion of \cref{lem:valid-cutoff-exists}, $\rho$ is a
valid cusp cutoff for $V$.

Every lattice action, deck action, finite cyclic action, cusp action, and
gluing map preserves the parameter.  Consequently, restricting the
construction over $\Delta_i$ to $V$ gives exactly the construction obtained
by applying the same formulas over $V$ with the original cutoff
$\eta_i$.  In particular, the two restrictions have the same core,
the same finite fillings, and the same period functions.  By
part~\textup{(i)}, each is canonically biholomorphic to the construction over
$V$ made with cutoff $\rho$. In the notation of part~\textup{(i)}, the
comparison from the first restriction to the second is
\[
 C_{12}^{V,\rho}
  =(\Theta_{\eta_2,\rho})^{-1}
    \circ\Theta_{\eta_1,\rho}.
\]
Different choices of $\rho$ give comparison maps with the same restriction
to the core, so uniqueness makes them equal. Comparisons constructed over
different parameter neighbourhoods likewise agree on their overlaps and
therefore glue to a canonical biholomorphism over
$\Delta_1\cap\Delta_2$. On a triple overlap, the composite comparison and
the direct comparison both restrict to the identity on the core; uniqueness
gives $C_{23}\circ C_{12}=C_{13}$. This proves part~\textup{(ii)}.

Taking the fibre over a point $a$ in part~\textup{(ii)} proves the final
choice-independence assertion.
\end{proof}


\section{The source complex structure on the six-sphere}
\label{app:source-six-sphere}

The complex structure used throughout this paper originates in
Alp\"oge's construction~\cite[Construction~6.1 and
Theorem~8.1]{AlpogeS6}. Its underlying idea is to complete a family of
complex two-tori over a three-punctured projective line in three
different ways. The finite monodromies at two punctures lead to smooth
multiple fibres; the unipotent monodromy at the third leads to a
reduced, nonnormal toric fibre. The choices of the three completions
also determine the integral topology of the total space. We describe
how these parts fit together, using the notation of
\cref{sec:relative-compactification}.

Take $B=\Pone_t$ with $p_1=0$, $p_2=1$, $p_0=\infty$, and
$B^\circ=B\setminus\{p_0,p_1,p_2\}$. Alp\"oge~\cite[Proposition~2.11]{AlpogeS6}
uses the hyperbolic orbifold of signature $(0;3,4,\infty)$ on $B$.
Its upper-half-plane uniformization $U$ has group
\[
 \Gamma=\langle g_1,g_2\mid g_1^3=g_2^4=1\rangle,
 \qquad g_0=(g_1g_2)^{-1},
\]
where $g_j$ fixes a point over $p_j$ and $g_0$ is the primitive
parabolic element at $p_0$. Removing the two finite-stabilizer orbits
gives a covering $U^\circ\to B^\circ$. Thus the base simultaneously
supplies an order-three point, an order-four point, and a cusp; each
has a distinct local completion of the torus family.

The integral lattice $\Lambda=\Z\langle\widehat\gamma,\widehat u,
\widehat w,\widehat\delta\rangle$ will be the first homology of a
smooth fibre. The representation $g_j\mapsto A_j$ in
\eqref{eq:A12} has $A_1^3=A_2^4=I$, whereas
$A_0=(A_1A_2)^{-1}$ is unipotent. The first-coordinate character
$\gamma:\Lambda\to\Z$ is fixed by both finite monodromies; their
coinvariants leave precisely the circle measured by $\gamma$. At the
cusp, the primitive rank-two lattice
\[
 \Lambda_{\tor}=\ker(A_0-I)=\operatorname{im}(A_0-I)
   =\Z\langle\widehat w,\widehat\delta\rangle
\]
is the lattice that will be exponentiated. On
$\overline\Lambda=\Lambda/\Lambda_{\tor}$, the induced map to
$\Lambda_{\tor}$ has matrix
\[
 B_0=\begin{pmatrix}0&1\\-1&0\end{pmatrix}
\]
in the bases $(\overline{\widehat\gamma},\overline{\widehat u})$
and $(\widehat w,\widehat\delta)$. Its unimodularity is what makes
the cusp's periodic toric construction yield a single irreducible
central fibre. These integral features are the link between the
complex-geometric fillings and Alp\"oge's later topology calculation.

Alp\"oge~\cite[Theorem~3.4]{AlpogeS6} realizes this monodromy by
holomorphic period functions $\tau,\mu,\beta$ on $U$. The first is a
modular parameter satisfying $j(\tau)=1728t\circ\pi$; its
transformations under $g_1,g_2$ are
$\tau\mapsto(\tau-1)/\tau$ and $\tau\mapsto-1/\tau$.
The branching of $j$ at $0$ and $1728$ matches the order-three and
order-four stabilizers: at the latter point the projective modular
action has order two, while its integral lift has order four. The
remaining two functions solve the affine descent equations forced by
the same integral monodromy. In the present notation the resulting
matrix is
\[
 \Omega_a(z)=
 \begin{pmatrix}
  6\mu(z)&\tau(z)&1&0\\
  \beta_0(z)+a&\mu(z)&0&1
 \end{pmatrix},
 \qquad a\in\mathcal A,
\]
where the additive freedom in $\beta$ is recorded by $a$. The
inequality $\mathscr D_a<0$ from \eqref{eq:D-a} gives
$\det_{\R}\Omega_a=\Im\tau\,\mathscr D_a\ne0$. Hence its columns
form a full lattice in $\C^2$. The equivariance
$\Omega_a(gz)=R_g(z)\Omega_a(z)A(g)^{-1}$ makes
$\C^2/\Omega_a(z)\Lambda$ descend from $U^\circ$ to a proper
holomorphic torus family $J_a\to B^\circ$. In particular, the period
functions provide both the complex structure of each fibre and the
coordinate changes needed to carry it around the base.

At $p_j$, a local uniformizing coordinate $s_j$ satisfies
$s_j(g_jz)=\e^{-2\pi\ii/m_j}s_j(z)$, where $(m_1,m_2)=(3,4)$;
the coarse coordinate is $t_j=s_j^{m_j}$. The pulled-back family
extends over $s_j=0$ with a smooth torus fibre. Alp\"oge's
logarithmic transform~\cite[Theorem~5.4]{AlpogeS6} divides this
extension by the affine cyclic action
\[
 (z,\zeta)\longmapsto
 \left(g_jz,R_{g_j}(z)\zeta+
       \Omega_a(g_jz)\frac{v_j}{m_j}\right),
\]
using the invariant vectors
\[
 v_1=\widehat\gamma+2\widehat u-4\widehat w,
 \qquad
 v_2=-\widehat\gamma-3\widehat u+3\widehat w.
\]
The relations $A_jv_j=v_j$, $\gamma(v_1)=1$, and $\gamma(v_2)=-1$ make
the cyclic action free, as verified in \cref{lem:finite-free}.  The quotient
is therefore smooth; its reduced central surface $S_j$
is a free cyclic quotient of a complex two-torus and is bielliptic.
Because the map to the coarse base is $t_j=s_j^{m_j}$, its fibre is
$m_jS_j$, namely $3S_1$ or $4S_2$. Over $s_j\ne0$, translation by
\[
 \frac{\log s_j}{2\pi\ii}\,\Omega_a(z)v_j
\]
conjugates this affine action to the original monodromy action; the branch
independence of the resulting identification with $J_a$ is verified after
\eqref{eq:sigma-ja}.

The cusp requires a different operation. On a simply connected
horodisc above $p_0$, choose a logarithmic coordinate $s$ with
$q=\e^{2\pi\ii s}=1/t$ and $s(g_0z)=s(z)-1$. The period block
$Z_a$ formed by the first two columns of $\Omega_a$ has the form
\[
 Z_a=sB_0+C_a(q),
 \qquad C_a\in\mathcal O(D_0,M_2(\C)),
\]
because $\tau-s$, $\mu$, and $\beta_0+a+\tau$ extend across $q=0$.
Quotienting first by the last two periods, those of
$\Lambda_{\tor}$, turns the fibre coordinates into $(\C^*)^2$.
For $\bar\lambda\in\overline\Lambda$, the remaining period acts
there by multiplication by
\[
 q^{B_0\bar\lambda}
 \exp\bigl(2\pi\ii C_a(q)\bar\lambda\bigr).
\]
The singular factor is thus an integral monomial in $q$, while the
second factor is a holomorphic unit. This is the passage from the
analytic period family to Alp\"oge's toric filling
\cite[Lemma~4.2 and Theorem~4.5]{AlpogeS6}.

More explicitly, triangulate $\R^2$ periodically by the two
unimodular triangles in each unit square and cone the triangles from
the origin in $\R^2\oplus\R$ at height one. The resulting countable
fan is smooth. Its toric threefold has a holomorphic height character
$q$, and on each maximal affine chart this map reads
$q=z_0z_1z_2$. Hence the central divisor upstairs is reduced with
simple normal crossings. Translation of the fan by
$B_0\bar\lambda$, together with the holomorphic unit in the preceding
display, extends the residual $\overline\Lambda$-action over
$q=0$. Alp\"oge proves that, on a sufficiently small cusp disc, this
action is free and properly discontinuous and its quotient is proper
over the disc. Since $B_0\overline\Lambda=\Z^2$, all vertices of the
periodic fan lie in one orbit. The quotient central fibre $W$ is
therefore irreducible and nonnormal, although it has normal-crossing
singularities. Its normalization is the degree-six del Pezzo surface:
the six boundary curves form a hexagon, and opposite sides are
identified in $W$, producing three double curves and two triple
points~\cite[Proposition~4.6]{AlpogeS6}. Exponentiating the fibre
coordinates identifies the toric quotient over $q\ne0$ with the
original family $J_a$ over the punctured cusp disc
\cite[Proposition~4.7]{AlpogeS6}.

The three punctured identifications attach the two logarithmic
fillings and the toric cusp filling to the same smooth torus family.
Alp\"oge~\cite[Construction~6.1 and Theorem~6.2]{AlpogeS6}
shows that the resulting space is a compact Hausdorff complex
threefold $X^{\mathrm A}$ with a proper map to $\Pone$ and exactly
the fibres described above. The affine translations also enter its
topology. In Alp\"oge's notation the cusp gluing is untranslated,
and the character values are
$(\ell_0,\ell_1,\ell_2)=(0,1,-1)$; consequently
\[
 12\ell_0-4\ell_1-3\ell_2=-1.
\]
Alp\"oge~\cite[Theorem~7.17, Corollary~7.29, and
Theorem~8.1]{AlpogeS6} proves that these choices make
$X^{\mathrm A}$ simply connected with the integral homology of
$S^6$, and then identifies its underlying smooth manifold with
$S^6$. Transporting the complex atlas along that diffeomorphism
gives the complex structure on the standard smooth six-sphere.

Our family uses precisely the monodromy matrices, finite twists,
period normalization, and untranslated cusp gluing just described.
The constant $a$ varies the invariant period $\beta_0+a$ through the
admissible half-plane; the member $a=a_{\mathrm A}$ is Alp\"oge's
compact threefold by \cref{lem:alpoge-member}. Over each relatively
compact parameter disc, the jointly holomorphic fillings give a proper
submersion. Together with the choice-independence in
\cref{prop:choice-independence}, \cref{cor:sphere-type} shows that all
members are diffeomorphic to $S^6$. The normalization of $W$ and its paired
boundary curves are the source data used in the cusp filling of
\cref{sec:local-fillings}.

\section*{Acknowledgements}
We thank F. Forstneri\v c for his correspondence with S.-Y. Xie
concerning our Oka result and for bringing G. Etesi's preprint
\cite{EtesiS6} to our attention.

S.-Y. Xie acknowledges partial support from the National Key R\&D
Program of China under Grants No.~2023YFA1010500 and
No.~2021YFA1003100, and from the National Natural Science Foundation
of China under Grants No.~12288201 and No.~12471081, as well as
support from the Xiaomi Young Talents Program.
Y. Kusakabe is supported by JSPS KAKENHI Grant Number JP24K16919.

\section*{AI Use Disclosure}
The research questions, mathematical insights, and principal constructions are due to the authors. GPT-5.6-Sol was used as an auxiliary tool for specific technical problems, calculations, consistency checks, and the drafting and polishing of the exposition. The authors have examined all arguments and take full responsibility for the content of this paper.



\end{document}